\newcommand{\Pref}[1]{Proposition~\ref{#1}}
\newcommand{\Lref}[1]{Lemma~\ref{#1}}
\newcommand{\Tref}[1]{Theorem~\ref{#1}}
\newcommand{\Eref}[1]{Example~\ref{#1}}
\newcommand{\Rref}[1]{Remark~\ref{#1}}
\tikzset{commutative diagrams/.cd,
pushout/.style={start anchor=center, end anchor=center, draw=none}
}
\renewcommand{\todo}[1]{}
\newcommand{\Q}{\mathbb{Q}}
\newcommand{\Z}{\mathbb{Z}}
\newcommand{\A}{\mathscr{A}}
\newcommand{\B}{\mathscr{B}}
\newcommand{\C}{\mathscr{C}}
\newcommand{\D}{\mathscr{D}}
\newcommand{\X}{\mathscr{X}}
\newcommand{\Y}{\mathscr{Y}}
\newtheorem{bigthm}{Theorem}
\newtheorem{thm}{Theorem}[section]
\newtheorem{lemma}[thm]{Lemma}
\newtheorem{cor}[thm]{Corollary}
\newtheorem{prop}[thm]{Proposition}
\theoremstyle{definition}
\newtheorem{defn}[thm]{Definition}
\newtheorem{ex}[thm]{Example}
\theoremstyle{remark}
\newtheorem{remark}[thm]{Remark}
\newcommand{\op}{^{\operatorname{op}}}
\newcommand{\pprime}{^{\prime\prime}}
\DeclareMathOperator{\Add}{Add}
\DeclareMathOperator{\Rep}{Rep}
\DeclareMathOperator{\Filt}{Filt}
\DeclareMathOperator{\sFilt}{sFilt}
\DeclareMathOperator{\Ext}{Ext}
\DeclareMathOperator{\ext}{ext}
\DeclareMathOperator{\add}{add}
\DeclareMathOperator{\Hom}{Hom}
\DeclareMathOperator{\Ch}{Ch}
\DeclareMathOperator{\Ab}{Ab}
\DeclareMathOperator{\Rmod}{R-Mod}
\DeclareMathOperator{\Inj}{Inj}
\DeclareMathOperator{\Flat}{Flat}
\DeclareMathOperator{\Proj}{Proj}
\DeclareMathOperator{\proj}{proj}
\DeclareMathOperator{\dlim}{\varinjlim}
\DeclareMathOperator{\coker}{coker}
\newcommand{\subsetref}[1]{\overset{\textnormal{#1}}{\subseteq}}
\DeclareMathOperator{\GInj}{GInj}
\DeclareMathOperator{\GFlat}{GFlat}
\DeclareMathOperator{\wGFlat}{wGFlat}
\DeclareMathOperator{\GProj}{GProj}
\DeclareMathOperator{\Gproj}{Gproj}
\title{Direct Limit closure of induced Quiver Representations}
\author{Rune Harder Bak}
\address{Department of Mathematical Sciences, University of Copenhagen, Universitetsparken~5, 2100 Copenhagen {\O}, Denmark} 
\email{bak@math.ku.dk}
\keywords{Direct limit, quiver representation, Gorenstein flat representations}
\subjclass[2010]{Primary 18E10. Secondary 16G20; 18A30.}
\begin{document}
\begin{abstract}
  In 2004 and 2005 Enochs et al. characterized the flat and projective quiver-representations of left rooted quivers. The proofs can be understood as filtering the classes $\Phi(\operatorname{Add}\mathscr X)$ and $\Phi(\varinjlim\mathscr X)$ when $\mathscr X$ is the finitely generated projective modules over a ring. In this paper we generalize the above and show that $\Phi(\mathscr X)$ can always be filtered for any class $\mathscr X$ in any AB5-abelian category. With an emphasis on $\Phi(\varinjlim\mathscr X)$ we investigate the Gorenstein homological situation. Using an abstract version of Pontryagin duals in abelian categories we give a more general characterization of the flat representations and end up by describing the Gorenstein flat quiver representations over right coherent rings.
\end{abstract}
\maketitle
\section*{Introduction}
Let $Q$ be a quiver (i.e. a directed graph) and consider for a class $\X$ of objects in an abelian category $\A$ the class $\Phi(\X)\subseteq\Rep(Q,\A)$ of quiver representations. This is the class containing all representations, $F$, s.t. the canonical map $\bigoplus_{w\to v}F(w)\to F(v)$ is monic and has cokernel in $\X$ for all verteces $v$ -  the sum being over all arrows to $v.$
When $Q$ is left-rooted (i.e $Q$ has
no infinite sequence of composable arrows of the form $\cdots \to \bullet \to \bullet \to \bullet$)
it was observed by Enochs, Oyonarte and Torrecillas in \cite{enochsflat} and Enochs and Estrada in \cite{enochsproj}
that when $\A$ is the category of modules over a ring,
\begin{align}
\Phi(\Proj(\A))&=\Proj(\Rep(Q,\A),\text{ and } \label{intro:proj} \\
\Phi(\Flat(\A))&=\Flat(\Rep(Q,\A). \label{intro:flat} 
\end{align}
Here the flat objects are precisely the direct limit closure of the finitely generated projective objects. This was done by showing, that if $\X$ is the finitely generated projective modules over a ring we can filter the classes
$\Phi(\Add\X)$ and $\Phi(\dlim\X)$ by sums of objects of the form $f_*(\X)$
where $f_v\colon\A\to\Rep(Q,\A)$ is the left-adjoint of the evaluation functor $e_v\colon\Rep(Q,\A)\to\A$ at the vertex $v.$
They show
\begin{align}
\Phi(\Add\X)&=\Add f_*(\X) \label{intro:add}\\
\Phi(\dlim\X)&=\dlim\add f_*(\X). \label{intro:dlim}
\end{align}
In 2014 Holm and J\o rgensen \cite{holm} generalized  (\ref{intro:proj}) to abelian categories with enough projective objects, and combining \cite[Thm. 7.4a and 7.9a]{holm} with \v S\v toví\v cek \cite[Prop.~1.7]{stovicek13}
  we get the following generalization of (\ref{intro:add}).
If $\X$ is a generating set of objects in a Grothendieck abelian category, then 
\begin{align}
  \Phi(\sFilt\X )=\sFilt f_*(\X), \label{intro:sfilt}
\end{align}
where $\sFilt\X$ consists of all summands of $\X$-filtered objects.
In this paper we show that $\Phi(\X)$ can always be filtered by $\bigoplus f_*(\X)$ in the following sense.

\begin{bigthm}\label{thma}
  Let $\A$ be an $AB5$-abelian category, let $\X\subset\A$ and let $Q$ be a left-rooted quiver. Then
  \begin{enumerate}
  \item[i)]  Any $F\in\Phi(\mathscr{X})$ is $\bigoplus f_*(\mathscr{X})$-filtered. 
  \end{enumerate}
  If $\X$ is closed under filtrations, then
  \begin{enumerate}
  \item[ii)]   $\Phi(\X)=\Filt\bigoplus f_*(\X)$
  \end{enumerate}
  In particular we have the following.
  If $\X$ is a set, then
  \begin{enumerate}
  \item[iii)]   $\Phi(\Filt\X)=\Filt f_*(\X)=\Filt\Phi(\X)$
  \item[iv)]   $\Phi(\sFilt\X)=\sFilt f_*(\X)=\sFilt\Phi(\X)$
  \end{enumerate}
  If $X\subseteq FP_{2.5}(\A)$ and $\A$ is locally finitely presented, then
  \begin{enumerate}
  \item[v)]   $\Phi(\dlim\X)=\dlim\ext f_*(\X)=\dlim\Phi(\X)$
  \end{enumerate}
\end{bigthm}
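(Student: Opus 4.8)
The plan is to establish (i) by hand, constructing one explicit $\bigoplus f_*(\X)$-filtration of an arbitrary $F\in\Phi(\X)$ out of the transfinite ``level'' stratification of $Q_0$ furnished by left-rootedness; then to deduce (ii) from (i) together with the fact that $\Phi(\X)$ is itself closed under extensions and filtrations; and finally to obtain (iii)--(v) by feeding the idempotent closures $\Filt\X$, $\sFilt\X$, $\dlim\X$ into (ii) and transporting the closure operations through the functors $f_v$, which are exact (coproducts are exact in an $AB5$ category) and preserve all colimits.

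For (i): since $Q$ is left-rooted, write $Q_0=\bigcup_\alpha V_\alpha$ with $V_0=\emptyset$, $V_{\alpha+1}=\{v:\ \text{every arrow }w\to v\ \text{has }w\in V_\alpha\}$, and $V_\beta=\bigcup_{\gamma<\beta}V_\gamma$ at limits; each $V_\alpha$ is closed under predecessors and $Q$ has no oriented cycles. Given $F\in\Phi(\X)$, set $C_v=\coker\bigl(\bigoplus_{w\to v}F(w)\to F(v)\bigr)\in\X$ and let $G_\alpha=\operatorname{im}\bigl(\bigoplus_{v\in V_\alpha}f_v(F(v))\to F\bigr)\subseteq F$; using $AB5$ one checks $G_0=0$, $G_\beta=\bigcup_{\gamma<\beta}G_\gamma$ at limits, and $\bigcup_\alpha G_\alpha=F$. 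The real point is the identification of the successor factors. First, predecessor-closedness and the monicity of the canonical maps of $F$ give $G_\alpha(v)=\bigoplus_{w\to v}F(w)$ in the degrees $v\in V_{\alpha+1}\setminus V_\alpha$ and show that the canonical maps of $G_\alpha$ are monic (in fact isomorphisms in degrees outside $V_\alpha$); feeding $0\to G_\alpha\to F\to F/G_\alpha\to 0$ into the $3\times3$ lemma then shows the canonical maps of $F/G_\alpha$ are monic, with cokernel $C_v$ in degrees $v\notin V_\alpha$ and $0$ in degrees in $V_\alpha$. The counit now yields a map $\bigoplus_{v\in V_{\alpha+1}\setminus V_\alpha}f_v(C_v)\to G_{\alpha+1}/G_\alpha$ which is epic because $G_{\alpha+1}/G_\alpha$ is generated in the degrees $V_{\alpha+1}\setminus V_\alpha$ (absence of cycles gives $f_v(C_v)(v)=C_v$), and which is monic by induction on the level of the evaluation vertex, using predecessor-closedness, that a coproduct of monomorphisms is monic, and the monicity just obtained for $F/G_\alpha$. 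Hence $G_{\alpha+1}/G_\alpha\cong\bigoplus_{v\in V_{\alpha+1}\setminus V_\alpha}f_v(C_v)$, which lies in $\bigoplus f_*(\X)$ after discarding the $v$ with $C_v=0$, so $(G_\alpha)_\alpha$ is the required filtration. I expect this vertexwise monicity induction, together with the bookkeeping of cokernels through the $3\times3$ lemma, to be the main obstacle.

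For (ii): the inclusion $\Phi(\X)\subseteq\Filt\bigoplus f_*(\X)$ is (i). Conversely, each $\bigoplus_i f_{v_i}(X_i)$ lies in $\Phi(\X\cup\{0\})$, with cokernel $\bigoplus_{i:v_i=v}X_i$ at $v$; as a class closed under filtrations contains $0$ and is closed under arbitrary coproducts (a coproduct being a filtration), we get $\bigoplus f_*(\X)\subseteq\Phi(\X)$. Moreover $\Phi(\X)$ is closed under extensions --- apply the snake lemma to the three short exact rows $0\to\bigoplus_{w\to v}F'(w)\to\bigoplus_{w\to v}F(w)\to\bigoplus_{w\to v}F''(w)\to 0$ over $0\to F'(v)\to F(v)\to F''(v)\to 0$ to get monicity of the middle canonical map and a short exact sequence of cokernels --- and under filtrations, using in addition that a filtered colimit of monomorphisms is monic and that the cokernels assemble into a continuous chain with factors in $\X$. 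Therefore $\Filt\bigoplus f_*(\X)\subseteq\Filt\Phi(\X)=\Phi(\X)$.

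For (iii)--(v): (iii) and (iv) come from (ii) applied to $\Filt\X$ resp. $\sFilt\X$, each of which is closed under filtrations and is an idempotent closure, upon transporting the closure operation through $f_*$ via exactness and cocontinuity of the $f_v$ (so $f_v(\Filt\X)\subseteq\Filt f_v(\X)$, and likewise for $\sFilt$), together with the standard facts that direct sums and summands of filtered objects are filtered, that $\bigoplus f_*(\X)\subseteq\Filt f_*(\X)$, and that each $f_v(X)\in\Phi(\Filt\X)$. For (v) one applies $\dlim$ to (iv) and invokes the comparison --- valid when $\A$ is locally finitely presented and $\X\subseteq FP_{2.5}(\A)$ --- of $\dlim\ext(-)$, $\dlim\Filt(-)$, $\dlim\sFilt(-)$ and $\dlim(-)$ on $f_*(\X)$; here one uses the defining property of $FP_{2.5}$-objects and the fact that each $f_v$ preserves finitely presented objects and filtered colimits to see that an $\X$-filtered object is a direct limit of objects of $\ext\X$, and that a $\Phi(\dlim\X)$-representation is a direct limit of $\Phi(\X)$-representations (the reverse inclusion $\dlim\Phi(\X)\subseteq\Phi(\dlim\X)$ being immediate from $AB5$, since $\bigoplus_{w\to v}$ and $\coker$ are colimits and filtered colimits of monomorphisms are monic). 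Writing an arbitrary $\Phi(\dlim\X)$-representation as such a direct limit is where the finiteness hypothesis is genuinely needed, and is the remaining obstacle.
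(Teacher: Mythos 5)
Your proposal follows the paper's strategy quite closely: construct a $\bigoplus f_*(\X)$-filtration by transfinite induction on the level structure of $Q$ for (i), use closure of $\Phi(\X)$ under extensions and filtrations for (ii), and bootstrap to (iii)--(v) via the interplay of $\Filt$, $\sFilt$, $\dlim$ with $f_*$ and $\Phi$. The one genuine (if mild) variation is in (i): you index the filtration by the \emph{static} level sets $V_\alpha$ and take $G_\alpha=\operatorname{im}\bigl(\bigoplus_{v\in V_\alpha}f_v(F(v))\to F\bigr)$, whereas the paper (Lemmas \ref{lem3} and \ref{lem4}) peels off $\bigoplus_{v\in V^{F/F_\alpha}}f_v\bigl((F/F_\alpha)(v)\bigr)$ using the \emph{dynamic} vertex sets $V^{F/F_\alpha}$ and realizes the next stage as a pullback rather than an image; both give correct filtrations, and your static version is arguably easier to state, while the paper's dynamic version converges at least as fast and isolates the $3\times3$-lemma bookkeeping into a cleanly stated lemma. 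For (v) you correctly identify the missing ingredient -- that $\varinjlim\ext f_*(\X)$ is closed under filtrations when $\X\subseteq FP_{2.5}(\A)$ -- but leave it unproved; this is precisely Proposition \ref{prop1} in the paper (via Lemmas \ref{lemmadelta} and \ref{star1} to see $\ext f_*(\X)\subseteq FP_{2.5}$), and once that is in hand the chain of inclusions you sketch closes exactly as in the paper's proof.
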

Here $FP_{2.5}(\A)$ is a certain class of objects which sits between $FP_2(\A)$ and $FP_3(\A)$ with the property that it is always closed under extensions. In many situations (e.g $\A=\Rmod$) $FP_{2.5}(\A)=FP_2(\A)$ (Lemma~\ref{lemma4}).  

We note that $\varinjlim\ext\X=\varinjlim\add\X$ and $\Add\X=\sFilt\X$ when $\X$ consists of projective objects and that the finitely generated projective objects are $FP_n$ for any $n.$ Theorem~\ref{thma} is thus a generalization of (\ref{intro:add}) and (\ref{intro:dlim}). It also generalizes (\ref{intro:sfilt}) to not necessarily generating sets in arbitrary AB5-abelian category.
We show how to use this to reprove (\ref{intro:proj}) in abelian categories with enough projective objects. We also show (\ref{intro:flat}) (Lemma~\ref{fgprojchar}) when the category is generated by finitely generated projective objects and flat is understood as their direct limit closure (see Theorem~\ref{thmc} however for a more general version).

We then apply Theorem~\ref{thma} v) to the Gorenstein homological situation. We let $\GProj(\A)$  be the Gorenstein projective objects, let $\Gproj(\A)=\GProj(\A)\cap FP_{2.5}(\A)$ and immediately get
$\Phi(\varinjlim \Gproj(\A))=\varinjlim\ext f_*(\Gproj(\A)).$
Contrary to the case for ordinary projective objects, it is not clear, that this equals $\dlim\Gproj(\Rep(Q,\A))$ without some restrictions on $Q$.
In the following target-finite means that there are only finitely many arrows with a given target and locally path-finite means that there are only finitely many paths between two given vertices.
We have
\begin{bigthm}\label{thmb}
  Let $\A$ be a locally finitely presented category with enough projective objects, let $Q$ be a left-rooted quiver and assume that either
  \begin{itemize}
  \item   $Q$ is target-finite and locally path-finite, or
  \item  $\dlim \Gproj(\A)=\dlim \GProj(\A)$ (e.g if $\A=\Rmod$ and $R$ is Iwanaga-Gorenstein).
    \end{itemize}
  Then
    $$\Phi(\dlim \Gproj(\A))=\dlim \Gproj(\Rep(Q,\A))=\dlim\Phi(\Gproj(\A)).$$
In the latter case, this equals $\dlim \GProj(\Rep(Q,\A)).$
\end{bigthm}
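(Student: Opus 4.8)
The plan is to bootstrap from \Tref{thma} v). Since $\Gproj(\A)=\GProj(\A)\cap FP_{2.5}(\A)\subseteq FP_{2.5}(\A)$ and $\A$ is locally finitely presented, \Tref{thma} v) applies verbatim with $\X=\Gproj(\A)$ and yields
$$\Phi(\dlim\Gproj(\A))=\dlim\ext f_*(\Gproj(\A))=\dlim\Phi(\Gproj(\A)).$$
Thus the first and last members of the claimed equality are already identified, and the whole statement reduces to the identity $\dlim\ext f_*(\Gproj(\A))=\dlim\Gproj(\Rep(Q,\A))$, together with the assertion about $\GProj(\Rep(Q,\A))$ in the second case.

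For the inclusion ``$\subseteq$'' I would record two facts about $f_v$ that need only left-rootedness. First, $f_v$ is exact (its value at each vertex is a coproduct of copies of the identity and of zero maps, and coproducts are exact in an AB5 category), it preserves projectives (being left adjoint to the exact evaluation $e_v$), and by \eqref{intro:proj} every projective of $\Rep(Q,\A)$ is a summand of a coproduct of objects $f_w(P)$ with $P$ projective, so $e_v$ sends projectives to projectives; hence the adjunction $\Hom_{\Rep(Q,\A)}(f_v(-),P)\cong\Hom_\A(-,e_v(P))$ shows that $f_v$ takes a complete projective resolution in $\A$ to one in $\Rep(Q,\A)$, i.e.\ $f_v(\GProj(\A))\subseteq\GProj(\Rep(Q,\A))$. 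Second, $f_v$ takes finitely generated projectives to finitely generated projectives (again by the adjunction, $e_v$ preserving filtered colimits) and commutes with kernels, hence preserves $FP_{2.5}(\A)$. Combining, $f_v(\Gproj(\A))\subseteq\Gproj(\Rep(Q,\A))$, and since $\GProj(\Rep(Q,\A))$ and $FP_{2.5}(\Rep(Q,\A))$ are both closed under extensions, so is their intersection $\Gproj(\Rep(Q,\A))$; therefore $\ext f_*(\Gproj(\A))\subseteq\Gproj(\Rep(Q,\A))$ and, applying $\dlim$, $\dlim\ext f_*(\Gproj(\A))\subseteq\dlim\Gproj(\Rep(Q,\A))$.

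The reverse inclusion is where the two cases split. I would invoke the (standard, for left-rooted $Q$) fact that $\GProj(\Rep(Q,\A))\subseteq\Phi(\GProj(\A))$: applying the cokernel functors $c_v(-)=\coker\bigl(\bigoplus_{w\to v}(-)(w)\to(-)(v)\bigr)$ to a complete projective resolution of a Gorenstein projective $G$ and using that these functors are exact on $\Phi(\GProj(\A))$ shows each $c_v(G)$ is Gorenstein projective in $\A$. In the first case, $Q$ left-rooted and target-finite forces, by K\"onig's lemma (a finitely branching tree with no infinite branch is finite), that every vertex has only finitely many ancestors, and with local path-finiteness this exhibits each $c_v$ as a composite of operations — finite coproducts, evaluation, cokernels — under which $FP_{2.5}$ is stable (here the extension-closure of $FP_{2.5}$ is used); so $G\in FP_{2.5}(\Rep(Q,\A))$ forces $c_v(G)\in FP_{2.5}(\A)$ for all $v$, giving $\Gproj(\Rep(Q,\A))\subseteq\Phi(\GProj(\A)\cap FP_{2.5}(\A))=\Phi(\Gproj(\A))$, and taking $\dlim$ finishes. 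In the second case one argues without any finiteness analysis: $\dlim\Gproj(\Rep(Q,\A))\subseteq\dlim\GProj(\Rep(Q,\A))\subseteq\dlim\Phi(\GProj(\A))$, and the general AB5-fact $\dlim\Phi(\GProj(\A))\subseteq\Phi(\dlim\GProj(\A))$ combined with the hypothesis $\dlim\GProj(\A)=\dlim\Gproj(\A)$ and \Tref{thma} v) gives $\dlim\Phi(\GProj(\A))=\Phi(\dlim\Gproj(\A))=\dlim\Phi(\Gproj(\A))=\dlim\ext f_*(\Gproj(\A))$; together with the ``$\subseteq$'' already proved this collapses $\dlim\Gproj(\Rep(Q,\A))$, $\dlim\GProj(\Rep(Q,\A))$, $\dlim\Phi(\Gproj(\A))$ and $\Phi(\dlim\Gproj(\A))$ to a single class, which is exactly the remaining assertion.

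I expect the main obstacle to be the finiteness bookkeeping in the third paragraph: verifying from the definition of $FP_{2.5}$ that it is preserved by finite coproducts, evaluation and cokernels in $\Rep(Q,\A)$, and that the cokernel functors $c_v$ really are exact on $\Phi(\GProj(\A))$. This is the only place where the combinatorial hypotheses on $Q$ are used — they guarantee, via K\"onig's lemma, that the coproducts in sight are finite — and it is what separates the two cases of the theorem.
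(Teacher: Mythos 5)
Your proposal matches the paper's proof in structure: the first paragraph is Theorem~\ref{thma}(v) (via Remark~\ref{remarksquare} and Lemma~\ref{lemmadelta}), the second paragraph is the content of Lemmas~\ref{star1}(i) and~\ref{star2} plus extension-closedness of $\Gproj$, and the second case of the third paragraph reproduces the paper's chain of inclusions verbatim. The one place you part ways from the paper is the first case, and that is also where you have a genuine gap.

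You invoke K\"onig's lemma to conclude that ancestors are finite, and then hope to exhibit $c_v=\coker\phi^{(-)}_v$ as a composite of ``finite coproducts, evaluation, cokernels'' preserving $FP_{2.5}$. But the paper never uses K\"onig's lemma, and the decomposition you sketch is not the mechanism that makes the argument work. The crucial and nontrivial step is that \emph{evaluation $e_v$ preserves $FP_{2.5}$}, and this you assert without proof. In the paper this is Lemma~\ref{star1}(ii): when $Q$ is locally path-finite, $e_v$ has a right adjoint $g_v(X)(w)=\prod_{w\leadsto v}X$, the product is finite, hence $g_v$ preserves filtered colimits, and then $\Ext^i(e_v(X),-)\cong\Ext^i(X,g_v(-))$ transfers $FP_*$-ness. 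Target-finiteness is then used separately (Lemma~\ref{star1}(iii)) to say $\bigoplus_{w\to v}F(w)$ is a \emph{finite} sum of $FP_{2.5}$ objects, so that Lemma~\ref{lemmadelta}(ii) applied to $0\to\bigoplus_{w\to v}F(w)\to F(v)\to\coker\phi^F_v\to 0$ puts the cokernel in $FP_{2.5}(\A)$. Neither of these two ingredients is delivered by K\"onig's lemma or by the ``composite of operations'' framing; both assumptions on $Q$ are used once each and for different purposes. You correctly flag the ``finiteness bookkeeping'' as the obstacle, but you have misidentified what the bookkeeping actually is: it is the identification and use of the right adjoint $g_v$, not a combinatorial count of ancestors.

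Also, your appeal to the exactness of $c_v$ on $\Phi(\GProj(\A))$ and to $\GProj(\Rep(Q,\A))\subseteq\Phi(\GProj(\A))$ is fine, but both are simply instances of the cited Theorem~\ref{thm8} (the Gorenstein projective characterization for left-rooted $Q$), not something you need to re-derive; the paper treats this as a black box. With Lemma~\ref{star1}(ii) and (iii) supplied, your first case closes exactly as the paper's does.
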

Again contrary to the ordinary projective objects even for $\A=\Rmod$ it is not true in general that $\dlim \Gproj(\A)$ is all the Gorenstein Flat objects, $\GFlat(\A)$, nor those objects with Gorenstein injective Pontryagin dual, $\wGFlat(\A)$.
In the rest of the paper we study these classes in $\Rep(Q,\A).$
First we must explain what we mean by an abstract Pontryagin dual and we show how these arise natually and agree with the standard notion in well-known abelian categories.
We go on and characterize those objects with injective (or Gorenstein injective) Pontryagin dual as follows.

\begin{bigthm}\label{thmc}
  Let $\A$ be an abelian category with a Pontryagin dual to a category with enough injective objects and let $Q$ be a left-rooted quiver. Then
\begin{align*}
  \Flat(\Rep(Q,\A)) & = \Phi(\Flat(\A)) \\
  \wGFlat(\Rep(Q,\A)) & = \Phi(\wGFlat(\A))
\end{align*}
\end{bigthm}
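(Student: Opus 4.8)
The plan is to deduce both equalities from the corresponding descriptions of injective and Gorenstein injective representations over the opposite quiver, transported along Pontryagin duality. Write $D\colon\A\to\B$ for the (contravariant) Pontryagin dual, where $\B$ has enough injective objects; I will use that $D$ is faithful and exact, that it carries coproducts to products, and that, by the definitions recalled in the introduction, $\Flat(\A)=\{A\in\A : D(A)\in\Inj(\B)\}$ and $\wGFlat(\A)=\{A\in\A : D(A)\in\GInj(\B)\}$. Since $Q$ is left-rooted, $Q\op$ is right-rooted. First I would check that $D$ induces a Pontryagin duality between $\Rep(Q,\A)$ and $\Rep(Q\op,\B)$, via $F\mapsto D\circ F$ (an arrow $w\to v$ of $Q$ becoming an arrow $v\to w$ of $Q\op$); this is bookkeeping together with the facts that evaluation at a vertex commutes with $D$ and that $D$ sends the coproduct $\bigoplus_{w\to v}F(w)$ to the product $\prod_{w\to v}D(F(w))$. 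Granting this, $\Flat(\Rep(Q,\A))=\{F : D\circ F\in\Inj(\Rep(Q\op,\B))\}$ and $\wGFlat(\Rep(Q,\A))=\{F : D\circ F\in\GInj(\Rep(Q\op,\B))\}$, so the theorem reduces to identifying these two preimages.

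The key inputs are the two descriptions, valid for any category $\B$ with enough injective objects and any right-rooted quiver,
\begin{align*}
\Inj(\Rep(Q\op,\B)) &= \Psi(\Inj(\B)),\\
\GInj(\Rep(Q\op,\B)) &= \Psi(\GInj(\B)),
\end{align*}
where $\Psi(\Y)$ denotes the class of representations $G$ such that for every vertex $v$ the canonical map $G(v)\to\prod_{v\to w}G(w)$, the product over all arrows out of $v$, is an epimorphism with kernel in $\Y$. The first of these is the classical description of injective quiver representations; the second is its Gorenstein analogue. Both are obtained by dualizing the corresponding statements about projective, respectively Gorenstein projective, representations over the left-rooted quiver $Q$ in the category $\B\op$, which has enough projective objects.

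Finally I would translate the $\Psi$-conditions on $D\circ F$ back to $\Phi$-conditions on $F$. Fix $F\in\Rep(Q,\A)$ and a vertex $v$, and let $g_v\colon\bigoplus_{w\to v}F(w)\to F(v)$ be the canonical map from the definition of $\Phi$. Applying the contravariant exact functor $D$ and using $D(\bigoplus_{w\to v}F(w))=\prod_{w\to v}D(F(w))$, the map attached to $v$ in the definition of $\Psi(-)$ for $D\circ F$ is exactly $D(g_v)$. Because $D$ is faithful and exact it reflects exactness; hence $g_v$ is a monomorphism iff $D(g_v)$ is an epimorphism, and in that case $D(\coker g_v)=\ker D(g_v)$. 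Therefore $D(\coker g_v)\in\Inj(\B)$ iff $\coker g_v\in\Flat(\A)$, and $D(\coker g_v)\in\GInj(\B)$ iff $\coker g_v\in\wGFlat(\A)$, by the definitions of these classes. Running this over all vertices gives $D\circ F\in\Psi(\Inj(\B))$ iff $F\in\Phi(\Flat(\A))$, and $D\circ F\in\Psi(\GInj(\B))$ iff $F\in\Phi(\wGFlat(\A))$; combined with the first two steps this is the claim.

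I expect the real obstacle to be the Gorenstein half of the second step, namely $\GInj(\Rep(Q\op,\B))=\Psi(\GInj(\B))$ for an arbitrary abelian category $\B$ with enough injective objects. Unlike ordinary injective (or projective) objects, Gorenstein injective objects are not simply a filtration closure, so the inclusion $\Psi(\GInj(\B))\subseteq\GInj(\Rep(Q\op,\B))$ must be proved by assembling, vertex by vertex along the rooting, totally acyclic complexes of injectives whose cycle object is the given representation; it is exactly here that the well-foundedness coming from left-rootedness of $Q$ enters, through a transfinite induction. The remaining steps are formal manipulations with the Pontryagin dual, whose basic properties are set up earlier in the paper.
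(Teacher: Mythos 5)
Your proposal is correct and follows essentially the same route as the paper's own proof: reduce flatness (respectively weak Gorenstein flatness) of $F$ in $\Rep(Q,\A)$ to injectivity (respectively Gorenstein injectivity) of $F^+$ in $\Rep(Q\op,\B)$ using the induced componentwise Pontryagin dual, invoke the characterizations $\Inj(\Rep(Q\op,\B))=\Psi(\Inj(\B))$ and $\GInj(\Rep(Q\op,\B))=\Psi(\GInj(\B))$ from Theorem~\ref{thm8}, and then translate the $\Psi$-conditions on $F^+$ back into $\Phi$-conditions on $F$ via the exactness-creating properties of the dual, which is exactly Lemma~\ref{lemphipsi}. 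The one refinement worth flagging: the translation step you carry out in your third paragraph is precisely the ``in particular'' clause of Lemma~\ref{lemphipsi}, and it requires no target-finiteness assumption, because the coproduct $\bigoplus_{w\to v}F(w)$ dualizes to the product $\prod_{w\to v}F(w)^+$ regardless of whether the indexing set is finite; target-finiteness is only needed in the paper for the other direction $\Psi(\Y)^+\subseteq\Phi(\Y^+)$, which Theorem~\ref{thmc} does not use.
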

Here $\Flat(\A)$ is those objects with injective Pontryagin dual so this result reproves (\ref{intro:flat}) using the simpler characterization of injective representations in Enochs, Estrada and Garc\'\i a Rozas \cite[Prop 2.1]{enochsinj} instead of going through the proof of (\ref{intro:dlim}) as in \cite{enochsflat}. \Tref{thmc} tells us that, under the conditions of \Tref{thmb}, if $\dlim \Gproj(\A)=\wGFlat(\A)$ then also $\dlim \Gproj(\Rep(Q,\A))=\wGFlat(\Rep(Q,\A)).$ (Corollary \ref{newprop20})

In \cite{enochsinj} it is proved that
$\wGFlat(\Rep(Q,\A))=\GFlat(\Rep(Q,\A))$ when $\A=\Rmod$ and $R$ is Gorenstein.
We end this paper by showing that this also hold if
$R$ is just assumed to be coherent if we impose proper finiteness conditions on $Q$.
\begin{bigthm}\label{thmd}
Let $R$ be a right coherent ring and let $Q$ be a left-rooted and target-finite quiver. Then
$$ \wGFlat(\Rep(Q,\Rmod))=\GFlat(\Rep(Q,\Rmod)).$$
\end{bigthm}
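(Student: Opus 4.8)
The plan is to transport the module-level identity $\wGFlat(\Rmod)=\GFlat(\Rmod)$ — Holm's theorem relating Pontryagin duals to Gorenstein flatness over a right coherent ring — into $\Rep(Q,\Rmod)$ by means of \Tref{thmc} and \Tref{thma}. Since $\Rmod$ has a Pontryagin dual to the category of right $R$-modules, which has enough injective objects, \Tref{thmc} applies, and together with $\wGFlat(\Rmod)=\GFlat(\Rmod)$ it gives
\[
\wGFlat(\Rep(Q,\Rmod))=\Phi(\wGFlat(\Rmod))=\Phi(\GFlat(\Rmod)).
\]
The reverse inclusion $\GFlat(\Rep(Q,\Rmod))\subseteq\wGFlat(\Rep(Q,\Rmod))$ holds in general: applying the Pontryagin dual to a complete flat resolution of a Gorenstein flat object produces a complete injective resolution of the dual, because duals of flat objects are injective (by \Tref{thmc} the flat objects of $\Rep(Q,\Rmod)$ are $\Phi(\Flat(\Rmod))$) and by the adjunction $(I\otimes N)^{+}\cong\Hom(I,N^{+})$. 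So everything reduces to the single inclusion $\Phi(\GFlat(\Rmod))\subseteq\GFlat(\Rep(Q,\Rmod))$.

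To prove this, let $F\in\Phi(\GFlat(\Rmod))$. By \Tref{thma}~i) it is $\bigoplus f_*(\GFlat(\Rmod))$-filtered, so it is enough to show (a) that $f_v(C)\in\GFlat(\Rep(Q,\Rmod))$ for every vertex $v$ and every $C\in\GFlat(\Rmod)$, and (b) that $\GFlat(\Rep(Q,\Rmod))$ is closed under direct sums and under filtrations. For (a), fix a complete flat resolution $\mathbf T$ of $C$ over $R$: an exact complex of flat $R$-modules with $C$ a cycle such that $I\otimes_{R}\mathbf T$ is exact for every injective right $R$-module $I$. The functor $f_v$ is exact (its value at $u$ is the exact functor $M\mapsto M^{(\text{paths }v\to u)}$), and from these values one sees that the canonical map $\bigoplus_{w\to u}f_v(M)(w)\to f_v(M)(u)$ is a split monomorphism with cokernel $M$ if $u=v$ and $0$ otherwise; hence $f_v$ maps flat modules into $\Phi(\Flat(\Rmod))=\Flat(\Rep(Q,\Rmod))$, and $f_v(\mathbf T)$ is an exact complex of flat representations with $f_v(C)$ a cycle. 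It remains to see that $E\otimes f_v(\mathbf T)$ is exact for every injective $E\in\Rep(Q\op,\mathrm{Mod}\text{-}R)$; this follows from the natural isomorphism $E\otimes f_v(-)\cong E(v)\otimes_{R}(-)$ — the tensor-side companion of the adjunction between $f_v$ and evaluation — together with the fact that $E(v)$ is an injective right $R$-module (evaluation on $\Rep(Q\op,\mathrm{Mod}\text{-}R)$ has an exact left adjoint, hence preserves injectives), so that $E\otimes f_v(\mathbf T)\cong E(v)\otimes_{R}\mathbf T$ is exact. Thus $f_v(\mathbf T)$ is a complete flat resolution of $f_v(C)$, proving (a).

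Statement (b) is where both hypotheses are used. Since $Q$ is target-finite the biproduct $\bigoplus_{w\to v}$ is finite and hence commutes with arbitrary products, and since $R$ is right coherent the flat $R$-modules are closed under products (Chase's theorem); therefore $\Flat(\Rep(Q,\Rmod))=\Phi(\Flat(\Rmod))$ is closed under products as well as under coproducts — equivalently, $\Rep(Q,\Rmod)$ is the module category over a right coherent ringoid, namely the path ringoid of $Q$ over $R$ (here it is relevant that a left-rooted, target-finite quiver is automatically locally path-finite, which is what makes the relevant $\Hom$-modules finitely presented). Consequently $\Rep(Q,\Rmod)$ lies within the scope of the classical construction of the Gorenstein flat cotorsion pair, so $\GFlat(\Rep(Q,\Rmod))$ is the left-hand class of a hereditary cotorsion pair; in particular it is closed under direct sums and, by Eklof's lemma, under filtrations. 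Combining (a), (b) and \Tref{thma}~i) yields $F\in\GFlat(\Rep(Q,\Rmod))$, finishing the proof.

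The main obstacle is precisely step (b): verifying that $\Rep(Q,\Rmod)$ is coherent enough for the Gorenstein flat cotorsion pair to exist. This is the single place where target-finiteness of $Q$ (through closure of flats under products) and right coherence of $R$ are indispensable, and where an argument for an arbitrary left-rooted $Q$ would break down. A secondary, bookkeeping-type point is to fix once and for all the tensor product and the notion of Gorenstein flat object on $\Rep(Q,\Rmod)$ and to check the two adjunction identities $E\otimes f_v(-)\cong E(v)\otimes_{R}(-)$ and $(I\otimes N)^{+}\cong\Hom(I,N^{+})$ used above; both are formal consequences of the adjunction between $f_v$ and evaluation together with $\Hom$-tensor duality over $\Q/\Z$.
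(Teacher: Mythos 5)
Your proof takes a genuinely different route from the paper, and the difference conceals a serious gap at your step (b).

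The paper proves \Tref{thmd} by lifting the hypotheses of Lemmas~\ref{gorensteinflat3} and \ref{gorensteinflat4} from $\Rmod$ to $\Rep(Q,\Rmod)$ (this is Lemma~\ref{lem:1-5} and Proposition~\ref{propd}): one shows that the flat representations are preenveloping in $\Rep(Q,\Rmod)$ and then builds the right half of a complete flat resolution of a weakly Gorenstein flat representation one flat preenvelope at a time, exactly as in Lemma~\ref{gorensteinflat3}. No filtration of $F$ occurs, and no closure property of $\GFlat(\Rep(Q,\Rmod))$ under filtrations is ever used or proved. Your proof instead reduces (correctly, via \Tref{thmc}) to the single inclusion $\Phi(\GFlat(\Rmod))\subseteq\GFlat(\Rep(Q,\Rmod))$ and then runs the $\bigoplus f_*(\X)$-filtration of \Tref{thma}~i) through the class $\GFlat(\Rep(Q,\Rmod))$. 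Your step (a) — that $f_v$ preserves Gorenstein flatness, using $E\otimes f_v(-)\cong E(v)\otimes_R(-)$ and the fact that $e_v$ preserves injectives — is a correct co-Yoneda computation and is the Gorenstein flat analogue of Lemma~\ref{star2}, which the paper never states; that part is fine.

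The gap is step (b). You need $\GFlat(\Rep(Q,\Rmod))$ to be closed under filtrations, and you obtain this by declaring that $\Rep(Q,\Rmod)$ ``lies within the scope of the classical construction of the Gorenstein flat cotorsion pair,'' because it is modules over a right coherent ringoid. This is not established in the paper, and it is not an innocent bookkeeping step: the existence of the hereditary Gorenstein flat cotorsion pair, with $\GFlat$ a Kaplansky/deconstructible class closed under transfinite extensions, is at least as deep as the theorem you are trying to prove. Worse, the structure is circular: if you really have the Gorenstein flat machinery over the path ringoid (coherence of the ringoid plus the Holm-type theorem that $\wGFlat=\GFlat$ over a right coherent ringoid), then \Tref{thmd} follows immediately from that machinery with no filtration argument at all — the filtration via \Tref{thma} is then a detour, and if you do not have that machinery your step (b) is unsupported. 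You would need to actually prove, for $\Rep(Q,\Rmod)$ with $Q$ left-rooted and target-finite and $R$ right coherent, that $\GFlat$ is closed under extensions and well-ordered direct limits; the paper avoids exactly this by the preenvelope argument of Lemma~\ref{gorensteinflat3}. (Your observation that left-rooted plus target-finite forces locally path-finite is correct and is a nice remark, but it does not by itself yield the coherence or cotorsion-pair statements you invoke.)

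Also note that for the easy inclusion $\GFlat\subseteq\wGFlat$ you do not need any argument: with the paper's Definition~\ref{defn9} it is literally built into the definitions, since a complete flat resolution is by definition one whose Pontryagin dual is a complete injective resolution.
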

See also Proposition~\ref{propd} for a version for abelian categories.
If $Q$ is further locally path-finite (or $R$ is Gorenstein and $Q$ is just assumed to be left-rooted) the conditions for \Tref{thmb} and \Tref{thmc} are satisfied as well, so in this case (Corollary \ref{limgp=gf})
if $\dlim \Gproj(\A)=\GFlat(\A)$ then
\begin{align*}
  \dlim \Gproj(\Rep(Q,\A))
  =\GFlat(\Rep(Q,\A))=\Phi(\GFlat(\A)).
\end{align*}

The equality  $\varinjlim \Gproj(\Rmod) = \GFlat(\Rmod)$ is known to hold
when $R$ is an Iwanaga-Gorenstein ring (Enochs and Jenda \cite[Thm. 10.3.8]{relhomalg}) or if $R$ is an Artin algebra which is virtually Gorenstein (Beligiannis and Krause \cite[Thm. 5]{krause08}). In general $\varinjlim \Gproj(\Rmod)$ and $\GFlat(\Rmod)$ are different (Holm and Jørgensen \cite[Thm. A]{holm11}).

\section{Locally finitely presented categories}
In the following let $\mathscr{A}$ be an abelian category. First we recall some basic notions.

We say $\A$ is (AB4) if $\A$ is cocomplete and forming coproducts is exact,
(AB4${}^*$) if $\A$ is complete and forming products is exact, 
(AB5) if filtered colimits are exact,
Grothendieck if it is (AB5) and has a generator (i.e. a generating object or equivalently a generating set).
Here a class $\mathscr{S}\subseteq\mathscr{A}$ is said to generate $\mathscr{A}$ if
        it detects zero-morphisms i.e. a morphism $\begin{tikzcd} X \ar{r}{f} & Y \end{tikzcd}$
        is zero iff $\begin{tikzcd} S \ar{r}{g} & X \ar{r}{f} & Y \end{tikzcd}$ is zero for all
        $g$ with $S\in\mathscr{S}$.
\noindent

We write $X\in\varinjlim \mathscr{X}$ if $X=\varinjlim X_i$ for some filtered system $\{X_i\}\subseteq \mathscr{X}$.
We write $X\in\Filt\mathscr X$ if
there is a chain $X_0 \subseteq \ldots \subseteq X_\lambda = X$
for some ordinal $\lambda$ s.t. $X_{\alpha + 1}/X_\alpha \in \mathscr{X}$ for all $\alpha < \lambda$ and
$\varinjlim_{\alpha < \alpha_0}X_\alpha = X_{\alpha_0},$
for any limit ordinals $\alpha_0\leq \lambda.$
We say $X\in\Filt\mathscr{X}$ is \emph{$\X$-filtered}.
When $\lambda$ is finite, we say $X$ is a \emph{finite extension} of (objects of) $\mathscr X$,
and we let $\ext(\mathscr X)$ denote the class of finite extensions of $\mathscr X.$
This is also the extension closure of $\mathscr X$ i.e. the smallest subcategory of $\A$ containing $\X$ and closed under extensions.
For example the class $\bigoplus\X$ is the class of all (infinite) sums of elements of $\X.$
Such a sum, $\bigoplus_{i=1}^\lambda X_i$ is a colimit of a diagram with no arrows,
and as such is neither a direct limit nor a filtration.
It can however be realized as a filtration by  $\{\bigoplus_{i=1}^\alpha X_i\},$ for $\alpha<\lambda$ and as
a direct limit as $\{\bigoplus_{i\in I} X_i\},$ for $I$ finite, with arrows the inclusions.
In fact $\bigoplus\X=\Filt\X$ when $\X$ consists of projective objects. 
We say that $X\in\A$ is $FP_n$ if
the canonical map
\[ \varinjlim \Ext^k(X,Y_i) \to \Ext^k(X,\varinjlim Y_i)\] is an isomorphism for every
$0\leq k< n.$
The objects $FP_1(\A)$ are called finitely presented,
and the objects s.t the above map is injective for $k=0$ is called finitely generated and denoted $FP_0(\A).$
The category $\A$ is called locally finitely presented
  if it satisfies one (and therefore all) of the following equivalent conditions:
  \begin{enumerate}[(i)]
    \item $FP_1(\mathscr{A})$ is skeletally small (i.e. the isomorphism classes form a set)
    and $\varinjlim FP_1(\mathscr{A})=\mathscr{A}$ (Crawley-Boevey \cite{boevey94})
    \item $\mathscr{A}$ is Grothendieck and $FP_1(\mathscr{A})$ generate $\mathscr{A}$. (Breitsprecher \cite{breit70})
    \item $\mathscr{A}$ is Grothendieck and $\varinjlim FP_1(\mathscr{A})=\mathscr{A}$ (\cite{breit70}).
    \end{enumerate}

    The direct limit is very well-behaved in locally finitely presented categories. In particular we have that if $\X\subseteq FP_1(\A)$ is closed under direct sums, then $\varinjlim\X$ is closed under direct limits, and is thus the direct limit closure of $\X$ \cite[Lemma p. 1664]{boevey94}.
    We also have the following. The proof was communicated to me by Jan \v S\v toví\v cek (any mistakes are mine).

\begin{prop}\label{prop1}
Let $\mathscr{A}$ be a locally finitely presented abelian category.
If $\mathscr{X}\subseteq FP_2(\mathscr{A})$ is closed under extensions then so is $\varinjlim \mathscr{X}$.
It is thus closed under filtrations.
\end{prop}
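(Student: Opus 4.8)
The plan is to first reduce everything to showing that $\varinjlim\mathscr{X}$ is closed under extensions, and then to dispose of the ``closed under filtrations'' clause as an automatic consequence: a class that is closed under extensions and under direct limits is closed under transfinite filtrations, by transfinite induction on the length $\lambda$ of the filtration $X_0\subseteq\dots\subseteq X_\lambda$. At a successor step one applies extension-closure to $0\to X_\alpha\to X_{\alpha+1}\to X_{\alpha+1}/X_\alpha\to 0$, and at a limit step one uses that $X_{\alpha_0}=\varinjlim_{\alpha<\alpha_0}X_\alpha$ is a direct limit of objects already known to lie in $\varinjlim\mathscr{X}$. Here I use that $\varinjlim\mathscr{X}$ is closed under direct limits: this is the consequence of \cite{boevey94} recalled above, valid because $\mathscr{X}\subseteq FP_2(\mathscr{A})\subseteq FP_1(\mathscr{A})$ and $\mathscr{X}$, being closed under extensions, is in particular closed under finite direct sums.

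For extension-closure, let $0\to A\to B\xrightarrow{\beta}C\to 0$ be exact with $A,C\in\varinjlim\mathscr{X}$, and fix filtered (we may assume directed) presentations $A=\varinjlim_{i\in I}A_i$ and $C=\varinjlim_{j\in J}C_j$ with all $A_i,C_j\in\mathscr{X}$. For each $j$ form the pullback $B_j=B\times_C C_j$, which sits in an exact sequence $0\to A\to B_j\to C_j\to 0$. Since filtered colimits are exact in $\mathscr{A}$, one computes $\varinjlim_j B_j=B\times_C(\varinjlim_j C_j)=B\times_C C=B$, so it suffices to show $B_j\in\varinjlim\mathscr{X}$ for each $j$; then $B=\varinjlim_j B_j\in\varinjlim\mathscr{X}$ because $\varinjlim\mathscr{X}$ is closed under direct limits. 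This reduction replaces the quotient by an object of $\mathscr{X}$, hence by an $FP_2$ object.

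Now fix $j$ and set $C'=C_j$. The extension $0\to A\to B_j\to C'\to 0$ is classified by some $\xi\in\Ext^1(C',A)$. Because $C'\in FP_2(\mathscr{A})$, the canonical map $\varinjlim_i\Ext^1(C',A_i)\to\Ext^1(C',A)$ is an isomorphism, so $\xi$ is the image, under the colimit structure map $A_i\to A$, of some $\eta\in\Ext^1(C',A_i)$; choosing a representative $0\to A_i\xrightarrow{\iota}E\to C'\to 0$ of $\eta$ with $E\in\mathscr{X}$ (extension-closure applied to $A_i,C'\in\mathscr{X}$), the sequence $0\to A\to B_j\to C'\to 0$ is, up to isomorphism of extensions, the pushout of this one along $A_i\to A$, so $B_j=A\cup_{A_i}E$. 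Restricting $\{A_{i'}\}$ to the cofinal subsystem of indices $i'\geq i$ (still with colimit $A$), and using that forming the pushout along $A_i$ commutes with the filtered colimit $A=\varinjlim_{i'\geq i}A_{i'}$, we get $B_j=\varinjlim_{i'\geq i}D_{i'}$, where $D_{i'}=A_{i'}\cup_{A_i}E$ is the pushout of $\iota$ along $A_i\to A_{i'}$. Since $\iota$ is monic, so is $A_{i'}\to D_{i'}$, and pushouts preserve cokernels, so $0\to A_{i'}\to D_{i'}\to C'\to 0$ is exact; as $A_{i'},C'\in\mathscr{X}$ and $\mathscr{X}$ is closed under extensions, $D_{i'}\in\mathscr{X}$. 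Hence $B_j=\varinjlim_{i'\geq i}D_{i'}\in\varinjlim\mathscr{X}$, completing the argument.

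The main obstacle is the bookkeeping with the three nested colimits: cutting $C=\varinjlim_j C_j$ down to an $FP_2$ object by pullback, then pushing the extension class down to a single $A_i$ via the $FP_2$ property, then re-expanding the resulting pushout as a filtered colimit over $i'\geq i$ of short exact sequences whose middle terms stay in $\mathscr{X}$. The two points that require care are that pushout commutes with the filtered colimit in the first variable (it does, being itself a colimit) and that the pushout of the monomorphism $\iota$ remains monic with the expected cokernel $C'$; everything else is AB5-exactness together with the cited closure of $\varinjlim\mathscr{X}$ under direct limits.
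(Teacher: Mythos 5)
Your proof is correct and follows essentially the same route as the paper's: pull back along the filtered presentation of the quotient to reduce to an $FP_2$ cokernel, use the $FP_2$ property to descend the extension class to a finite stage $A_i$, then push out along the cofinal system $i'\geq i$ to realise $B_j$ as a filtered colimit of objects of $\mathscr{X}$, and conclude via the Crawley-Boevey closure of $\varinjlim\mathscr{X}$ under direct limits. The only differences are cosmetic (you spell out the "closed under filtrations" induction and the exactness of the pushed-out sequences, which the paper leaves implicit).
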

\begin{proof}
Let $\{S_i\}, \{T_j\}\subseteq \mathscr{X}$ be directed systems and let
\[0\to \varinjlim S_i \to E \to \varinjlim T_j \to 0 \]
be an exact sequence. We want to show that $E\in \varinjlim \mathscr{X}$.
First by forming the pullback
\[
\begin{tikzcd}
0
\ar[r]  &
\varinjlim S_i
\ar[r]
\ar[d]  &
E_j
\ar[phantom, very near start]{dr}{\ulcorner}
%\pushout{dr}
\ar[r]
\ar[d] &
T_j
\ar[r]
\ar[d] &
0 \\
0
\ar[r] &
\varinjlim S_i
\ar[r] &
E
\ar[r] &
\varinjlim T_j
\ar[r] &
0
\end{tikzcd}
\]
we see that $E=\varinjlim E_j$ since $\mathscr{A}$ is AB5
as it is locally finitely presented abelian, hence Grothendieck. Now since $T_j$ is in $FP_2(\mathscr{A})$ for every $j$
we have that
\[
  [0 \to \varinjlim S_i \to E_j \to T_j \to 0] \in\Ext^1(T_j,\varinjlim S_i)
\]
is in the image of the canonical map from $\varinjlim\Ext^1(T_j,E_i)$, that is, it is a pushout
\[
\begin{tikzcd}
0
\ar{r}
& S_i
\ar{r}
\ar{d}
& E_{ij}
\ar{r}
\ar{d}
& T_j
\ar{d}
\ar{r}
& 0 \\
0
\ar{r}
& \varinjlim S_i
\ar{r}
& E_j
\ar[phantom, very near start]{ul}{\lrcorner}
\ar{r}
& T_j
\ar{r}
& 0
\end{tikzcd}
\]
for some $i$ and some extension $E_{ij}\in\A.$

Now construct for every $k\geq i$ the pushout
\[
\begin{tikzcd}
0
\ar{r}
& S_i
\ar{r}
\ar{d}
& E_{ij}
\ar{r}
\ar{d}
& T_j
\ar{d}
\ar{r}
& 0 \\
0
\ar{r}
& S_k
\ar{r}
& E_{kj}
\ar[phantom, very near start]{ul}{\lrcorner}
\ar{r}
& T_j
\ar{r}
& 0
\end{tikzcd}
\]
Then $\varinjlim_k E_{kj} = E_j$ so
$E_j \in \varinjlim \mathscr{X}$ as $E_{kj}\in\X$ when $\mathscr{X}$ is closed under extensions.

Finally $E=\varinjlim E_j \in \varinjlim \mathscr{X}$ as
$\dlim\X$ is closed under direct limits when $\mathscr{X}\subset FP_1(\A).$
\end{proof}

The classes $FP_n(\mathscr{A})$ are all closed under finite sums (as in \cite[Lem. 1.3]{breit70}). They are not necessarily closed under extensions, but the following subclasses are:

\begin{defn}
Let $\mathscr{A}$ be an abelian category. We say $X\in\mathscr{A}$ is $FP_{n.5}$ if $X$ is $FP_n$
and furthermore, that the natural map $\varinjlim \Ext^n(X,Y_i)\rightarrow \Ext^n(X,\varinjlim Y_i)$ is monic
for every filtered system $\{Y_i\}\subseteq\A$.
We let $FP_*$ stand for an unspecified (but fixed) $FP_n$ or $FP_{n.5}$
\end{defn}
Note that by definition $FP_0(\A)=FP_{0.5}(\A)$ and also
$FP_1(\A)=FP_{1.5}(\A)$ by Stenström \cite[Prop. 2.1]{stenstrom70} when $\A$ is AB5. We have the following generalization of \cite[Lem. 1.9]{breit70} for $n,*=1$ and $\A$ Grothendieck.

\begin{lemma}\label{lemmadelta}
  Let $\A$ be an AB5-abelian category and let $$0\to A\to B\to C\to 0$$ be an exact sequence. Then
\begin{enumerate}[(i)]
  \item If $A$ and $C$ are $FP_{n.5}$, then so is $B$.
  \item If $B$ is $FP_*$ then $A$ is $FP_{*-1}$ iff $C$ is $FP_*$.
\end{enumerate}

\end{lemma}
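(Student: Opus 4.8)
The plan is to reduce everything to the long exact sequence in $\Ext$ obtained by applying $\Hom(-,\varinjlim Y_i)$ and $\varinjlim\Hom(-,Y_i)$ to the short exact sequence $0\to A\to B\to C\to 0$, and then chase isomorphisms and monomorphisms through the resulting ladder. Concretely, fix a filtered system $\{Y_i\}$ and write $\phi^k_X\colon \varinjlim\Ext^k(X,Y_i)\to\Ext^k(X,\varinjlim Y_i)$ for the canonical comparison map; by definition $X$ is $FP_n$ iff $\phi^k_X$ is iso for $k<n$, and $X$ is $FP_{n.5}$ iff in addition $\phi^n_X$ is monic. The two long exact sequences (the colimit of the $\Ext^\bullet(-,Y_i)$ sequences, which stays exact because filtered colimits are exact in an AB5 category, and the $\Ext^\bullet(-,\varinjlim Y_i)$ sequence) fit into a commutative ladder connected by the vertical maps $\phi^k_A,\phi^k_B,\phi^k_C$. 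Everything then follows from the five/four-lemma applied degree by degree.

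For (i), assume $A$ and $C$ are $FP_{n.5}$. First I would note $B$ is $FP_n$: for $k<n$ the maps $\phi^k_A$ and $\phi^k_C$ are isomorphisms, and $\phi^{k-1}_C$, $\phi^{k+1}_A$ sit in the ladder as well; running the five-lemma across the segment
$$\varinjlim\Ext^{k-1}(C,Y_i)\to\varinjlim\Ext^{k-1}(A,Y_i)\to\varinjlim\Ext^{k}(B,Y_i)\to\varinjlim\Ext^{k}(C,Y_i)\to\varinjlim\Ext^{k}(A,Y_i)$$
mapping to the corresponding $\Ext(-,\varinjlim Y_i)$ segment shows $\phi^k_B$ is iso (the outer four maps being iso, using that $\phi^{k-1}_A,\phi^{k-1}_C,\phi^k_A,\phi^k_C$ are all iso since $k-1,k<n$). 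For the half-step, we need $\phi^n_B$ monic: now the relevant segment has $\phi^{n-1}_C$ iso, $\phi^{n-1}_A$ iso (since $n-1<n$), $\phi^n_C$ monic, and $\phi^n_A$ monic, so the four-lemma (the version concluding injectivity) gives $\phi^n_B$ monic. Hence $B$ is $FP_{n.5}$. (One must also check that the colimit of the $\Ext^\bullet(-,Y_i)$-sequences really is exact: this is exactly where AB5 enters, since a filtered colimit of exact sequences of abelian groups is exact.)

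For (ii), suppose $B$ is $FP_*$, say $FP_m$ or $FP_{m.5}$; I must show $A$ is $FP_{*-1}$ iff $C$ is $FP_*$, where $FP_{*-1}$ means $FP_{m-1}$ in the first case and $FP_{(m-1).5}$ in the second. The key input is that $\phi^k_B$ is iso for $k<m$ and, in the half-integer case, monic for $k=m$. Fixing a degree and looking at the ladder segment
$$\Ext^{k}(B,-)\to\Ext^{k}(A,-)\to\Ext^{k+1}(C,-)\to\Ext^{k+1}(B,-),$$
the middle vertical map $\phi^k_A$ controls the relationship between $\phi^{k+1}_C$ and the known behaviour of $\phi^k_B,\phi^{k+1}_B$. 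If $A$ is $FP_{*-1}$, then $\phi^k_A$ is iso for $k<m-1$ (resp. monic at $k=m-1$), and combining with $\phi^k_B$ iso and $\phi^{k+1}_B$ iso/monic, a diagram chase yields $\phi^{k+1}_C$ iso for $k+1\le m-1$ and iso/monic at the top degree — i.e. $C$ is $FP_*$. Conversely if $C$ is $FP_*$, the same segments run the other way give $\phi^k_A$ iso/monic in the required range, so $A$ is $FP_{*-1}$. The bookkeeping is made slightly delicate by the off-by-one index shift ($\Ext^k(A,-)$ sits between $\Ext^k(B,-)$ and $\Ext^{k+1}(C,-)$) and by the need to handle the top degree with the injective four-lemma rather than the five-lemma; this indexing is the only real subtlety, and it is the step I would write out most carefully. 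Note finally that for small $n$ the hypotheses in the excerpt ($FP_0=FP_{0.5}$, and $FP_1=FP_{1.5}$ in an AB5 category by Stenström) make the base cases consistent, so no separate argument is needed there.
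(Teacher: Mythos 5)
Your proposal takes essentially the same route as the paper: write the two long exact $\Ext$-sequences (filtered colimit of $\Ext^{\bullet}(-,Y_i)$ and $\Ext^{\bullet}(-,\varinjlim Y_i)$, using AB5 for exactness of the colimit row), connect them by the comparison maps $\phi^k$, and apply the five-lemma in degrees $k<n$ and the injective four-lemma at the top degree $n$, with the same remark about $FP_0=FP_{0.5}$ and $FP_1=FP_{1.5}$ for the base of part (ii). One small slip in part (i): the five-lemma segment centered at $\Ext^k(B,-)$ should read $\Ext^{k-1}(A)\to\Ext^{k}(C)\to\Ext^{k}(B)\to\Ext^{k}(A)\to\Ext^{k+1}(C)$, so the four outer comparison maps are $\phi^{k-1}_A,\phi^k_C,\phi^k_A,\phi^{k+1}_C$ (the last of these is only monic when $k+1=n$, which is all the five-lemma needs); this does not affect the validity of the argument.
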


\begin{proof} $(i)$ Let $\{X_i\}\subset\A$ be a filtered system. From the long exact sequence in homology we get for all $k<n:$
\[
\begin{tikzcd}[column sep = tiny]
\varinjlim \Ext^{k-1} (A,X_i)
\ar{r}
\ar{d}{\cong}
& \varinjlim \Ext^k(C, X_i)
\ar{r}
\ar{d}{\cong}
& \varinjlim \Ext^k(B, X_i)
\ar{r}
\ar{d}
& \varinjlim \Ext^k(A, X_i)
\ar{r}
\ar{d}{\cong}
& \varinjlim \Ext^{k+1}(C,X_i)
\ar[hookrightarrow]{d}
\\
\Ext^{k-1}(A, \varinjlim X_i)
\ar{r}
& \Ext^k(C, \varinjlim X_i)
\ar{r}
& \Ext^k(B, \varinjlim X_i)
\ar{r}
& \Ext^k(A,\varinjlim X_i)
\ar{r}
& \Ext^{k+1}(C, \varinjlim X_i)
\end{tikzcd}
\]
and
\[
\begin{tikzcd}
\varinjlim \Ext^{n-1}(A, X_i)
\ar{r}
\ar{d}{\cong}
& \varinjlim \Ext^n(C,X_i)
\ar{r}
\ar[hookrightarrow]{d}
& \varinjlim \Ext^n(B,X_i)
\ar{r}
\ar{d}
& \varinjlim \Ext^n(A, X_i)
\ar[hookrightarrow]{d}
\\
\Ext^{n-1}(A,\varinjlim X_i)
\ar{r}
& \Ext^n(C, \varinjlim X_i)
\ar{r}
& \Ext^n(B, \varinjlim X_i)
\ar{r}
& \Ext^n(A, \varinjlim X_i)
\end{tikzcd}
\]

And the result follows by the 5-lemma.
$(ii)$ is proved similarly. Note that when $*=1$ we must use that $FP_1=FP_{1.5}$ because $FP_0=FP_{0.5}.$
\end{proof}

  \begin{lemma}\label{lemma4}
    Let $\A$ be an AB5-abelian category generated by a set
    of $FP_{n.5}$-objects. Then
    
    \begin{enumerate}[(i)]
    \item If $X\in FP_0(\A)$ there exists an epi
      $X_0\to X$ with $X_0\in FP_{n.5}(\A).$ 
    \item $FP_k(\A) = FP_{k.5}(\A)$ for all $k\leq n$
    \end{enumerate}
  \end{lemma}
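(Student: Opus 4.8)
The plan is to establish (i) first and then bootstrap (ii) from it by an induction on $k$, feeding an epimorphism from a finite sum of generators into Lemma~\ref{lemmadelta}.

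For (i), fix a generating set $\mathscr{G}$ of $FP_{n.5}$-objects. Since $\A$ is cocomplete and $\mathscr{G}$ generates, the canonical morphism onto $X$ from the coproduct of all arrows $G\to X$ with $G\in\mathscr{G}$ is an epimorphism. The images of its finite subcoproducts form a directed family of subobjects of $X$, and since filtered colimits are exact in $\A$ they commute with images, so the union of this family is all of $X$. Next I would use that an object lies in $FP_0(\A)$ exactly when it is not the union of a proper directed family of subobjects: given such a family $\{X_\alpha\}$ with union $X$, the compatible quotients $X\to X/X_\alpha$ represent an element of $\varinjlim\Hom(X,X/X_\alpha)$ whose image in $\Hom(X,\varinjlim X/X_\alpha)=\Hom(X,0)$ vanishes, hence the element itself vanishes by injectivity of the comparison map, i.e.\ $X=X_\alpha$ for some $\alpha$. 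Applied to the family above, this exhibits $X$ as the image of a \emph{finite} subcoproduct $X_0=\bigoplus_{i\in\Lambda}G_i$; and $X_0\in FP_{n.5}(\A)$ because a finite direct sum is built from its summands by repeated (split) extensions, which is Lemma~\ref{lemmadelta}(i) applied inductively.

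For (ii) the inclusion $FP_{k.5}(\A)\subseteq FP_k(\A)$ is immediate from the definitions, so the task is the reverse inclusion, which I would prove by induction on $k\le n$ with base case $k=0$ the identity $FP_0=FP_{0.5}$. Given $X\in FP_k(\A)$ with $1\le k\le n$, pick via (i) an epimorphism $X_0\to X$ with $X_0\in FP_{n.5}(\A)$, noting $FP_{n.5}(\A)\subseteq FP_{k.5}(\A)$ because $X_0$ is then $FP_{k+1}$ when $k<n$ (so its level-$k$ comparison map is an isomorphism, in particular monic) and is $FP_{n.5}$ outright when $k=n$. Writing $0\to K\to X_0\to X\to 0$, Lemma~\ref{lemmadelta}(ii) with $*=k.5$ reduces the goal $X\in FP_{k.5}(\A)$ to $K\in FP_{(k-1).5}(\A)$, while the same lemma with $*=k$ (using that $X_0$ and $X$ are $FP_k$) gives $K\in FP_{k-1}(\A)$; since $k-1<k\le n$, the induction hypothesis then upgrades $K$ to $FP_{(k-1).5}(\A)$, and we are done.

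I expect the obstacle to be organizational rather than conceptual. In (i) the delicate point is that the $\Ext$-theoretic definition of $FP_0$ really does yield the finite-subcoproduct reduction — this is precisely where AB5 enters. In (ii) one must keep straight the two readings of ``$*-1$'' for integral versus half-integral $*$ so that the two invocations of Lemma~\ref{lemmadelta}(ii) interlock. The one spot where circularity could threaten is $k=1$, where the $*=k$ invocation is exactly the case of Lemma~\ref{lemmadelta}(ii) that appeals to $FP_1=FP_{1.5}$; this is harmless, since that equality is Stenström's theorem and is proved independently of the present statement.
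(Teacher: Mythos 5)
Your proof is correct and follows essentially the same route as the paper: deduce (i) from the existence of an epi onto any $FP_0$ object from a finite sum of generators, then get (ii) by induction using Lemma~\ref{lemmadelta}(ii) twice (once at integral level $k$ to place the kernel in $FP_{k-1}$, once at level $k.5$ after upgrading the kernel by the induction hypothesis). The only real difference is that the paper cites Breitsprecher (Satz~1.6 for the epi from a finite sum of generators, Lemma~1.3 for closure of $FP_n$ under finite sums), whereas you reprove Satz~1.6 inline via the directed system of images of finite subcoproducts and the $FP_0$ characterization, and obtain closure under finite sums from Lemma~\ref{lemmadelta}(i) applied to split extensions. Your inline argument is fine and correctly locates where AB5 enters; your handling of the $k=1$ case and the observation that Stenström's $FP_1=FP_{1.5}$ is proved independently are exactly the point the paper flags in the parenthetical remark after Lemma~\ref{lemmadelta}.
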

  \begin{proof}
    For $(i)$ notice that by \cite[satz 1.6]{breit70}  if $\A$ is generated by $\mathscr{X}\subseteq FP_1(\A)$
  and $C\in FP_0(\A)$ then we have an epi from a finite sum of elements of $\mathscr{X}$ to $C$.  But $FP_n$ (and $FP_{n.5}$) are all closed under finite sums.
  The proof of $(ii)$ goes by induction.
  The case $n=0$ is true by definition, so assume $\A$ is generated by a set of $FP_{n.5}$-objects and that $X\in\ FP_n(\mathscr{A})$.
  By $(i)$ we get an exact sequence
\[
  \begin{tikzcd} 0\ar{r} & X_1 \ar{r} & X_0 \ar{r} & X \ar{r} & 0\end{tikzcd}
  \]  with $X_0\in FP_{n.5}(\A).$
  By \Lref{lemmadelta} $(ii)$ $X_1\in FP_{n-1}(\A)$ which by induction hypothesis equals $FP_{(n-1).5}(\A)$ so $X\in FP_{n.5}(\A)$ again by \Lref{lemmadelta} $(ii)$.
  \end{proof}

  In particular $FP_{n.5}(\Rmod)=FP_n(\Rmod)$ is closed under extensions for any $n$ and any ring $R.$
  We think of the objects of $FP_*(\A)$ as beeing small.

\section{Quiver representations}
Let $Q$ be a quiver, i.e. a directed graph.
We denote the vertices by $Q_0$ and we denote an arrow (resp. a path) from $w$ to $v$ by
$w \to v$ (resp. $w \leadsto v$).
A quiver may have infinitely many vertices and arrows, but we will need the following finiteness conditions.
\begin{defn}
  Let $Q$ be a quiver.
  We say $Q$ is \emph{target-finite} (resp. \emph{source-finite}) if there are only finitely many arrows with a given target (resp. source).
  We say $Q$ is \emph{left-rooted} (resp. \emph{right-rooted}) if there is no infinite sequence of composable arrows
  $\cdots\to\bullet\to\bullet$ (resp.   $\bullet\to\bullet\to\cdots$ ).
  Finally we say $Q$ is \emph{locally path-finite} if there is only finitely many paths between any two given vertices.
\end{defn}
\begin{remark}\label{rem:finitequiver}
  Notice that $Q$ is target-finite (resp. left-rooted) iff $Q^{\text{op}}$ is source-finite (resp. right-rooted) and that left/right-rooted quivers are necessarily acyclic (i.e have no cycles or loops). Locally path-finite is self-dual.
Even if a quiver satisfies all of the above finiteness conditions, it can still have infinitely many vertices and arrows, e.g the quiver
 $\cdots \leftarrow\bullet\to\bullet\leftarrow\bullet\to\bullet\leftarrow\bullet\to\cdots$
\end{remark}
% When $v$ is given we let $\{w\to v\}$ denote the set of all arrows from $w$ to $v.$
When the quiver is left-rooted we can
use the following sets for inductive arguments.
Let $V_0=\emptyset$ and define for any ordinal $\lambda$,
$V_{\lambda + 1} = \{v \in Q_0 | w \to v \Rightarrow w \in V_\lambda \}$ 
and for limit ordinals $V_\lambda = \bigcup_{\alpha < \lambda} V_\alpha$.
Notice that $V_1$ is precisely the sources of $Q.$

As noted in \cite[Prop. 3.6]{enochsflat} a quiver is left-rooted precisely when $Q_0=V_\lambda$ for some $\lambda.$

\begin{ex}
Let $Q$ be the (left-rooted) quiver:
\begin{displaymath}
\xymatrix@R=1pc@C=0pc{
{} & \underset{5}{\bullet} & {}
\\
{} & \underset{4}{\bullet} \ar[u] & {}
\\
{} & \underset{3}{\bullet} \ar@<0.5ex>[u] \ar@<-0.5ex>[u] & {}
\\
\underset{1}{\bullet} \ar[ur] & {} & \underset{2}{\bullet} \ar[ul]
}
\end{displaymath}
For this quiver, the transfinite sequence $\{V_\alpha\}$ looks like this:
\begin{displaymath}
\begin{array}{c@{~~~~~}c@{~~~~~}c@{~~~~~}c@{~~~~~}c}
    {
\xymatrix@R=1pc@C=0pc{
{} & \underset{5}{\circ} & {}
\\
{} & \underset{4}{\circ} \ar@{..>}[u] & {}
\\
{} & \underset{3}{\circ} \ar@{..>}@<0.5ex>[u] \ar@{..>}@<-0.5ex>[u] & {}
\\
\underset{1}{\circ} \ar@{..>}[ur] & {} & \underset{2}{\circ} \ar@{..>}[ul]
}
    }
        &
    {
\xymatrix@R=1pc@C=0pc{
{} & \underset{5}{\circ} & {}
\\
{} & \underset{4}{\circ} \ar@{..>}[u] & {}
\\
{} & \underset{3}{\circ} \ar@{..>}@<0.5ex>[u] \ar@{..>}@<-0.5ex>[u] & {}
\\
\underset{1}{\bullet} \ar@{..>}[ur] & {} & \underset{2}{\bullet} \ar@{..>}[ul]
}
    }
        &
    {
\xymatrix@R=1pc@C=0pc{
{} & \underset{5}{\circ} & {}
\\
{} & \underset{4}{\circ} \ar@{..>}[u] & {}
\\
{} & \underset{3}{\bullet} \ar@{..>}@<0.5ex>[u] \ar@{..>}@<-0.5ex>[u] & {}
\\
\underset{1}{\bullet} \ar@{..>}[ur] & {} & \underset{2}{\bullet} \ar@{..>}[ul]
}
    }
        &
    {
\xymatrix@R=1pc@C=0pc{
{} & \underset{5}{\circ} & {}
\\
{} & \underset{4}{\bullet} \ar@{..>}[u] & {}
\\
{} & \underset{3}{\bullet} \ar@{..>}@<0.5ex>[u] \ar@{..>}@<-0.5ex>[u] & {}
\\
\underset{1}{\bullet} \ar@{..>}[ur] & {} & \underset{2}{\bullet} \ar@{..>}[ul]
}
    }
        &
    {
\xymatrix@R=1pc@C=0pc{
{} & \underset{5}{\bullet} & {}
\\
{} & \underset{4}{\bullet} \ar@{..>}[u] & {}
\\
{} & \underset{3}{\bullet} \ar@{..>}@<0.5ex>[u] \ar@{..>}@<-0.5ex>[u] & {}
\\
\underset{1}{\bullet} \ar@{..>}[ur] & {} & \underset{2}{\bullet} \ar@{..>}[ul]
}
    }
        \\
V_0 = \varnothing
& V_1 = \{1,2 \}
& V_2 = \{1,2,3 \}
& V_3 = \{1,2,3,4 \}
& V_4 = Q_0
\end{array}
\end{displaymath}
\end{ex}
Let now $\mathscr{A}$ be an abelian category.
A quiver $Q$ generates a category $\overline{Q}$, called the path category,
with objects $Q_0$ and morphisms the paths in $Q$. We define
$\textnormal{Rep}(Q,\mathscr{A}) = \textnormal{Fun}(\overline{Q}, \mathscr{A})$.
Note that $F\in \textnormal{Rep}(Q,A)$ is given by its values on vertices and arrows
and we picture $F$ as a $Q$-shaped diagram in $\mathscr{A}$.

For  $v\in Q_0$ the evaluation functor $e_v : \Rep(Q,\mathscr{A}) \to \mathscr{A}$ is given by $e_v(F) = F(v)$ for $v\in Q_0$ and $e_v(\eta)=\eta_v$ for $\eta\colon F\to G$.
If $\mathscr{A}$ has coproducts (or $Q$ is locally path-finite) this has a left-adjoint $f_v : \mathscr{A} \to \Rep(Q,\mathscr{A})$ given by
\[f_v (X)(w) = \bigoplus_{v \leadsto w} X\]
where the sum is over all paths from $v$ to $w$
and $f_v(X)(w \to w') $ is the natural inclusion.
For $\mathscr{X}\subseteq \mathscr{A}$
we define
\[f_*(\mathscr{X})=\{f_v(X) \mid v\in Q_0, X\in \mathscr{X}\}\subseteq\Rep(Q,\A).\]
See \cite{enochsflat} or \cite{holm} for details.

\begin{remark}\label{delta}
Limits and colimits are point-wise in $\Rep(Q,\mathscr{A})$, so $e_v$ preserves them
and is in particular exact. Thus its left-adjoint $f_v$ preserves projective objects.
% When $\mathscr{A}$ satisfies AB4 $f_v$ is exact. In this case $e_v$ thus preserves projective objects as well.
\end{remark}

\begin{defn}\label{defphi}
  For any quiver $Q$, any abelian category $\mathscr{A}$, any $F\in\Rep(Q,\A)$ and any $v\in Q_0$ we have a canonical map
  $\varphi_v^F = \bigoplus_{w\to v} F(w)\to F(v)$
    and we set
  \begin{align*}
  \Phi(\mathscr{X})=\left\{ F \in \Rep(Q,\mathscr{A}) \middle|
   \forall v\in Q_0: \varphi_v^F \textnormal{ is monic and } \textnormal{coker}\,\varphi_v^F\in\mathscr{X}\right\}.
   \end{align*}
\end{defn}

\begin{remark}\label{fviso}
Observe that
$f_v(\mathscr{X})\subseteq \Phi(\mathscr{X}).$ 
In fact for any $v\in Q_0$,
$\varphi_w^{f_v(X)}$
is an isomorphism, unless $w = v$ in which case it is monic (in fact zero if $Q$ is acyclic) with cokernel $X$.
As in \cite[Prop. 7.3]{holm} if $Q$ is left-rooted then $\Phi(\X)\subseteq\Rep(Q,\X)$ if $\X$ is closed under arbitrary sums or $Q$ is locally path-finite and $\X$ is closed under finite sums.
\end{remark}
The aim of this section is to show that sums of objects of $f_*(\X)$ filter $\Phi(\X).$
Let us first see how $f$ and $\Phi$ play together with various categorical constructions.
%The key is the reversal of the inclusions in the two lemmas.
\begin{lemma}\label{nylem1}
  Let $Q$ be a quiver, $\mathscr{A}$ an abelian category satisfying AB4, and
  $\mathscr{X}\subseteq \mathscr{A}$  arbitrary. Then
  \begin{enumerate}[(i)]
    \item $f_*(\textnormal{extensions of } \mathscr{X}) \subseteq \textnormal{extensions of } f_*(\mathscr{X})$,
    \item $f_*(\textnormal{summands of }\mathscr{X}) \subseteq \textnormal{summands of } f_*(\mathscr{X})$,
    \item $f_*(\varinjlim \mathscr{X}) \subseteq \varinjlim f_*(\mathscr{X})$,
    \item $f_*(\Filt \mathscr{X}) \subseteq \Filt f_*(\mathscr{X})$.
  \end{enumerate}
\end{lemma}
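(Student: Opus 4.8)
The plan is to exploit the fact that $f_v$ is a left adjoint, hence preserves all colimits, together with the explicit point-wise formula $f_v(X)(w)=\bigoplus_{v\leadsto w}X$. Parts (iii) and (iv) are then essentially immediate: since $f_v$ preserves arbitrary colimits, it preserves filtered colimits, so $f_v(\varinjlim X_i)=\varinjlim f_v(X_i)$, giving (iii); and since $f_v$ preserves pushouts and transfinite compositions of monomorphisms (the latter using AB4 to see that the relevant colimits are computed compatibly and that monos are preserved — $f_v$ applied to a mono $A\hookrightarrow B$ is the coproduct over the path set of that mono, which is monic because coproducts are exact in an AB4 category), a filtration $0=X_0\subseteq X_1\subseteq\cdots\subseteq X_\lambda=X$ with $X_{\alpha+1}/X_\alpha\in\mathscr X$ is carried to a filtration of $f_v(X)$ whose successive quotients are $f_v(X_{\alpha+1}/X_\alpha)\in f_*(\mathscr X)$, giving (iv). Note (iv) formally contains (i) (the finite case) and, via (i) applied to split extensions, is closely related to (ii); but I will record (i) and (ii) separately since they need only AB4 and no transfinite bookkeeping.

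For (i): given an extension $0\to X'\to X\to X''\to 0$ with $X',X''\in\mathscr X$, apply $f_v$. Right-exactness of $f_v$ gives $f_v(X)\to f_v(X'')\to 0$, and left-exactness of the coproduct functor (AB4) applied point-wise shows $f_v(X')\to f_v(X)$ is monic; a diagram chase (or the point-wise computation, where the sequence becomes $\bigoplus_{v\leadsto w}$ of the original short exact sequence, which is exact by AB4) shows $0\to f_v(X')\to f_v(X)\to f_v(X'')\to 0$ is exact, so $f_v(X)$ is an extension of elements of $f_*(\mathscr X)$. For (ii): if $X$ is a summand of some $Y\in\mathscr X$, write $Y\cong X\oplus Z$; then $f_v(Y)\cong f_v(X)\oplus f_v(Z)$ since left adjoints preserve finite coproducts (equivalently biproducts, $f_v$ being additive), exhibiting $f_v(X)$ as a summand of $f_v(Y)\in f_*(\mathscr X)$.

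The only genuinely delicate point — the "main obstacle", though it is minor — is in (iv): one must check that $f_v$ carries a continuous chain to a continuous chain, i.e. that $f_v$ commutes with the colimits at limit ordinals and sends the monomorphisms $X_\alpha\hookrightarrow X_{\alpha+1}$ to monomorphisms. The first is automatic since $f_v$ is a left adjoint. The second is where AB4 is used: $f_v$ applied to a monomorphism is, point-wise, a coproduct of copies of that monomorphism indexed by the path set $\{v\leadsto w\}$, and such a coproduct of monomorphisms is again a monomorphism precisely because coproducts are exact. One should also note the hypothesis "$\mathscr A$ satisfies AB4" guarantees the coproducts defining $f_v$ exist, so $f_v$ is defined on all of $\mathscr A$; if instead $Q$ were merely locally path-finite the path sets would be finite and finite coproducts would suffice, but that case is not needed here. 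I would present (iii) and (iv) first and then remark that (i) is the finite instance of (iv) while (ii) follows from additivity of $f_v$, keeping the write-up short.
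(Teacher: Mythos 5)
Your proposal is correct and uses essentially the same ingredients as the paper's (very terse) proof: (iii) from $f_v$ being a left adjoint, (i) from $f_v$ being exact under AB4 (coproducts indexed by path sets are exact), (ii) from additivity, and (iv) by combining the extension and colimit cases. You simply spell out the transfinite bookkeeping for (iv) that the paper leaves implicit in the phrase ``follows from (i) and (iii).''
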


\begin{proof}
$(i)$ follows since $f_v$ is exact when $\mathscr{A}$ is AB4 and
$(iii)$ since $f_v$ is a left adjoint.
$(ii)$ is clear and $(iv)$ follows from $(i)$ and $(iii)$.
\end{proof}

\begin{lemma}\label{nylem2}
  Let again $Q$ be a quiver, $\mathscr{A}$ an abelian category satisfying AB4, and
  $\mathscr{X}\subseteq \mathscr{A}$ arbitrary. Then
    \begin{enumerate}[(i)]
    \item $ \Phi (\textnormal{extensions of }\mathscr{X})\subseteq \textnormal{extensions of } \Phi(\mathscr{X})$,
    \item $\textnormal{summands of }\Phi (\mathscr{X}) \subseteq \Phi (\textnormal{summands of }\mathscr{X}) $.
    \end{enumerate}
    When $\mathscr{A}$ is AB5 we further have
    \begin{enumerate}[(i)]
    \setcounter{enumi}{2}
    \item $\varinjlim \Phi (\mathscr{X}) \subseteq \Phi (\varinjlim \mathscr{X})$,
    \item $\Filt \Phi (\mathscr{X}) \subseteq \Phi(\Filt \mathscr{X}).$
    \end{enumerate}
        When $\mathscr{A}$ is AB4${}^*$ and $Q$ is target-finite we have
    \begin{enumerate}[(i)]
    \setcounter{enumi}{4}
    \item $\prod \Phi (\mathscr{X}) \subseteq \Phi (\prod \mathscr{X})$.
    \end{enumerate}
\end{lemma}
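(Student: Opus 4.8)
The strategy throughout is to verify the defining condition of $\Phi$ vertex-by-vertex, using that the map $\varphi_v^F\colon\bigoplus_{w\to v}F(w)\to F(v)$ is built from the coproduct functor and evaluation, both of which interact well with the relevant (co)limits. For (i), given an extension $0\to F'\to F\to F''\to 0$ with $F',F''\in\Phi(\X)$, I would argue that one can choose, for each $v$, compatible subobjects exhibiting $F$ as an extension \emph{inside} $\Phi(\X)$; concretely, since $e_v$ is exact (Remark~\ref{delta}) and coproducts are exact (AB4), applying the snake lemma to the map of short exact sequences relating $\bigoplus_{w\to v}(-)(w)$ to $(-)(v)$ shows $\varphi_v^F$ is monic with $\coker\varphi_v^F$ an extension of $\coker\varphi_v^{F'}$ by $\coker\varphi_v^{F''}$, both in $\X$. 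For (ii), a summand $F$ of some $G\in\Phi(\X)$ splits off at every vertex, so $\varphi_v^F$ is a retract of $\varphi_v^G$; a retract of a monomorphism is a monomorphism, and the cokernel of $\varphi_v^F$ is correspondingly a summand of $\coker\varphi_v^G\in\X$.

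For (iii) and (iv), which need AB5, the point is that filtered colimits (and transfinite filtrations, which are built from filtered colimits and extensions) are exact, so they commute with the formation of $\varphi_v$ and its cokernel. For a filtered system $\{F_i\}\subseteq\Phi(\X)$ with colimit $F$, evaluation at $v$ commutes with the colimit (colimits are pointwise), and so does the coproduct $\bigoplus_{w\to v}$; exactness of filtered colimits then gives that $\varphi_v^F=\varinjlim\varphi_v^{F_i}$ is monic with $\coker\varphi_v^F=\varinjlim\coker\varphi_v^{F_i}\in\varinjlim\X$. Statement (iv) follows by combining (i) and (iii): a transfinite filtration is assembled by iterating extensions at successor steps and filtered colimits at limit steps, so one runs the same bookkeeping as in Lemma~\ref{nylem1}(iv), checking that the filtration chain for $F$ can be chosen so that the successive quotients land in $\Phi(\Filt\X)$ — indeed in $\Phi(\X)$ at each successor step and using that $\varinjlim$ of things in $\Phi(\Filt\X)$ stays there by (iii).

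For (v), with $\A$ satisfying AB4${}^*$ and $Q$ target-finite: here the coproduct $\bigoplus_{w\to v}F(w)$ is a \emph{finite} coproduct since only finitely many arrows hit $v$, hence agrees with the corresponding product and commutes with arbitrary products in the $F$-variable; products are exact by AB4${}^*$ and evaluation preserves them, so for $\{F_i\}\subseteq\Phi(\X)$ the map $\varphi_v^{\prod F_i}=\prod\varphi_v^{F_i}$ is monic with cokernel $\prod\coker\varphi_v^{F_i}\in\prod\X$. The one genuinely delicate point — the main obstacle — is the exactness bookkeeping in (i) and (iv): one must be careful that the chosen subobject structure at each vertex is natural in the maps of the quiver (so that the pieces really do assemble into subrepresentations), and in (iv) that at limit ordinals the partial filtration one has built is exactly the filtered colimit of the earlier stages, which is where AB5 and the pointwise nature of colimits in $\Rep(Q,\A)$ are both used. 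The target-finiteness hypothesis in (v) is essential precisely because for infinite index sets an arbitrary product does not commute with an infinite coproduct, so the argument genuinely breaks without it.
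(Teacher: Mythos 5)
Your proposal follows the paper's proof essentially step for step: snake lemma at each vertex for (i), retracts respecting kernels/cokernels for (ii), exactness of filtered colimits for (iii), combining (i) and (iii) for (iv), and finite coproduct $=$ product plus AB4${}^*$ for (v). One small wrinkle worth noting: in (i) your concrete argument (like the paper's own proof) actually establishes $\textnormal{extensions of }\Phi(\X)\subseteq\Phi(\textnormal{extensions of }\X)$, the \emph{reverse} of the inclusion as typeset in the lemma; this reversed inclusion is the one the proof of Theorem~\ref{thma} uses via (iv), so the typeset direction in the paper appears to be a misprint, and your argument proves the statement that is actually needed. Your opening sentence for (i) ("exhibiting $F$ as an extension inside $\Phi(\X)$") reads as if you were going to produce subrepresentations of $F$ with quotients in $\Phi(\X)$, which would be the stated direction and is not what the snake-lemma argument does -- but the computation you then describe is the right one and no subrepresentation construction is needed, so the "delicate point" about naturality of chosen subobjects you worry about at the end does not actually arise here.
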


\begin{proof}
$(ii)$ follows as retracts respects kernels and cokernels, $(iii)$ is clear when $\mathscr{A}$ satisfies AB5. For $(i)$
let $0\to F\to F\pprime\to F' \to 0$, be an exact sequence with $F, F'\in \Phi(\mathscr{X})$. For every $v \in Q_0$ we have that

\begin{equation*}
\begin{tikzcd}
    {}
    & 0
        \arrow[d]
    & {}
    & 0
        \arrow[d] \\
    0
        \arrow[r]
    & \oplus_{w\to v} F(w)
        \arrow[r]
        \arrow[d]
    & \oplus_{w \to v} F\pprime(w)
        \arrow[r]
        \arrow[d]
    & \oplus_{w\to v} F'(w)
        \arrow[r]
        \arrow[d]
    & 0 \\
    0
        \arrow[r]
    & F(v)
        \arrow[r]
        \arrow[d]
    & F\pprime(v)
        \arrow[r]
    & F'(v)
        \arrow[r]
        \arrow[d]
    & 0 \\
    {}
    & C
        \arrow[d]
    &
    & C'
        \arrow[d] \\
    {}
    & 0
    &
    & 0
\end{tikzcd}
\end{equation*}
 has exact rows since $\mathscr{A}$ is AB4 and $e_v$ is exact.
  The condition follows from the snake lemma, since $C,C'\in \mathscr{X}$.

Again $(iv)$ follows from $(i)$ and $(iii)$.
For $(v)$ we notice that for any $\{F_i\}\subset\A$ and vertex $v$ we have
$\prod_i\phi^{F_i}_v=\phi^{\prod F}_v$ since the sum in the definition of $\phi$ is finite, hence a product, when $Q$ is target-finite.
\end{proof}

As for smallness we have the following

\begin{lemma}\label{star1}
Let $\mathscr{A}$ be an abelian category.

\begin{enumerate}[(i)]
    \item If $\mathscr{A}$ satisfies AB5 then  $f_v$ preserves $FP_*$
    \item If $Q$ is locally path-finite, then $e_v(-)$ preserves $FP_*$.
     \item If $Q$ is target-finite and locally path-finite then
    \[ \Phi(\mathscr{X})\cap FP_*(\textnormal{Rep}(Q,\mathscr{A})) \subseteq \Phi(\mathscr{X}\cap FP_*(\mathscr{A})). \]
\end{enumerate}
\end{lemma}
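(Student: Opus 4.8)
The plan is to prove the three items in order, using the point-wise description of (co)limits in $\Rep(Q,\A)$ together with the fact (Lemma~\ref{lemmadelta}, Lemma~\ref{nylem2}) that $FP_*$-classes and $\Phi$ interact well with finite (co)products.

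For (i), I would argue that $f_v$ has an exact right adjoint, namely $e_v$, which (being evaluation) preserves all colimits — in particular filtered colimits — by Remark~\ref{delta}. A standard adjunction computation then gives, for any object $X\in\A$ and any filtered system $\{G_i\}\subseteq\Rep(Q,\A)$, natural isomorphisms $\Ext^k_{\Rep(Q,\A)}(f_v(X),G_i)\cong\Ext^k_\A(X,e_v(G_i))$ for all $k$ (using that $f_v$ is exact when $\A$ is AB5, hence sends a projective resolution of $X$ to one of $f_v(X)$, and $e_v$ is exact), and likewise $\Ext^k_{\Rep(Q,\A)}(f_v(X),\varinjlim G_i)\cong\Ext^k_\A(X,\varinjlim e_v(G_i))$ since $e_v$ commutes with the colimit. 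Comparing the canonical maps shows that the map testing whether $f_v(X)$ is $FP_n$ (resp. $FP_{n.5}$) is, up to this identification, the map testing whether $X$ is $FP_n$ (resp. $FP_{n.5}$); so $f_v$ preserves $FP_*$.

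For (ii), when $Q$ is locally path-finite, $e_v$ also has a \emph{left} adjoint $f_v$ (this is exactly why local path-finiteness is imposed), and $f_v$ is exact since $\A$ is AB4 (Lemma~\ref{nylem1}(i) uses this; here $f_v(X)(w)=\bigoplus_{v\leadsto w}X$ is a \emph{finite} sum, so exactness needs no AB-hypothesis beyond finite biproducts being exact). The same adjunction yoga as above — now with $e_v$ in the first variable and its left adjoint $f_v$ exact — yields $\Ext^k_\A(e_v(F),Y)\cong\Ext^k_{\Rep(Q,\A)}(F,f_v(Y))$ naturally, and $f_v$ commutes with the filtered colimit $\varinjlim Y_i$ because each component sum is finite. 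Chasing the canonical maps as before shows $e_v$ preserves $FP_*$.

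For (iii), the key point is that under both finiteness hypotheses the functors in play all preserve smallness in the appropriate direction. Let $F\in\Phi(\X)\cap FP_*(\Rep(Q,\A))$ and fix $v\in Q_0$; I must show $\coker\varphi_v^F\in\X\cap FP_*(\A)$. Membership in $\X$ is immediate from $F\in\Phi(\X)$, so only $FP_*$ is at issue. From the defining exact sequence $0\to\bigoplus_{w\to v}F(w)\to F(v)\to\coker\varphi_v^F\to 0$, with the sum finite since $Q$ is target-finite, Lemma~\ref{lemmadelta}(ii) reduces the claim to showing that $F(v)=e_v(F)$ is $FP_*$ and that $\bigoplus_{w\to v}F(w)$ is $FP_{*-1}$ — better, it suffices that both $F(v)$ and each $F(w)$ (for the finitely many $w\to v$) are $FP_*$, since $FP_n$-classes are closed under finite sums, $FP_*\subseteq FP_{*-1}$, and then Lemma~\ref{lemmadelta}(ii) applies. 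But $F(w)=e_w(F)$ is $FP_*$ by part (ii), as $Q$ is locally path-finite and $F$ is $FP_*$; similarly for $F(v)$. Hence $\coker\varphi_v^F\in FP_*(\A)$ for every $v$, giving the inclusion.

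The main obstacle is the bookkeeping in (i) and (ii): one must check that the adjunction isomorphisms on $\Hom$ extend to all $\Ext^k$ below the relevant degree (and to the relevant injectivity statement for the $.5$-case) compatibly with the canonical comparison maps to $\Ext$ of a colimit — i.e. that the natural transformation witnessing ``$\varinjlim\Ext^k(-,Y_i)\to\Ext^k(-,\varinjlim Y_i)$'' transports correctly across the adjunction. This is routine once one notes that $f_v$ exact $\Rightarrow$ it carries projective resolutions to projective resolutions, and that $e_v$ (resp. the finite-sum $f_v$) commutes with the filtered colimit on the nose; but it is the step that needs care rather than the formal shuffling in (iii).
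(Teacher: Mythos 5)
Parts (i) and (iii) of your proposal follow essentially the same route as the paper: for (i) you use the adjunction isomorphism $\Ext^k(f_v(X),-)\cong\Ext^k(X,e_v(-))$ together with the fact that $e_v$ preserves filtered colimits, and for (iii) you apply part (ii) to the finite sum $\bigoplus_{w\to v}F(w)$ and then Lemma~\ref{lemmadelta}(ii) to the defining exact sequence; this matches the paper.

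Part (ii), however, contains a genuine error: you use the wrong adjoint, and consequently the wrong $\Ext$-isomorphism. You write that $e_v$ has a \emph{left} adjoint $f_v$ and deduce $\Ext^k(e_v(F),Y)\cong\Ext^k(F,f_v(Y))$. But $f_v\dashv e_v$, so the adjunction isomorphism you actually have is $\Ext^k(f_v(X),G)\cong\Ext^k(X,e_v(G))$, which puts $f_v$ in the \emph{first} variable and tells you nothing about $\Ext^k(e_v(F),-)$. To handle $\Ext^k(e_v(F),Y)$ you need a \emph{right} adjoint of $e_v$. That functor is the coinduction $g_v$ given by $g_v(X)(w)=\prod_{w\leadsto v}X$, and the isomorphism you want is $\Ext^k(e_v(F),Y)\cong\Ext^k(F,g_v(Y))$. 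Accordingly, you have also misattributed the role of local path-finiteness: $f_v$ preserves filtered colimits for free (it is a left adjoint) and plays no role here. What local path-finiteness buys is that the product in $g_v(X)(w)=\prod_{w\leadsto v}X$ is finite, hence a finite coproduct, so that $g_v$ commutes with filtered colimits --- which is precisely what is needed to compare the canonical maps and conclude that $e_v$ preserves $FP_*$. Since your (iii) invokes (ii), the statement of (iii) still stands (because (ii) is true, just not for the reason you give), but the proof as written does not establish (ii).
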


\begin{proof}
$(i)$
This follows from the natural isomorphism (\cite[prop 5.2]{holm})
\[\Ext^i(f_v(X), -) \cong \Ext^i(X, e_v(-))\]
and the fact that $e_v$ preserves filtered colimits (\Rref{delta}).

$(ii)$
In this case $e_v$ has a right adjoint $g_v(X)(w) = \prod_{w\leadsto v} X$ (see \cite[3.6]{holm})
which is a finite product, hence a sum, as $Q$ is locally path-finite. So $g_v(-)$ preserves filtered colimits.
Thus $e_v$ preserves $FP_*$, by the natural isomorphism (\cite[prop 5.2]{holm})$$\Ext^1(e_v(X),-)\cong \Ext^1(X, g_v(-))$$ $(iii)$ Let $F\in\Phi(\X)$ be $FP_*.$ Given $v\in Q_0$ we only need to show that  coker$\phi^F_v$ is $FP_*.$ Since $Q$ is target-finite, $\oplus_{w\to v}F(w)$ is a finite sum of $FP_*$-objects by $(ii)$ and since $FP_*$ is closed under finite sums the result follows from $(ii)$ and \Lref{lemmadelta} $(ii)$.
\end{proof}
The following two lemmas will be used to construct a $\oplus f_*(\mathscr{X})$-filtration for any $F\in \Phi (\mathscr{X})$ for suitable $\X\subset\A$ when $Q$ is left-rooted. This is the key in proving \Tref{thma}.

\begin{lemma}\label{lem3}
  Let $Q$ be an acyclic (e.g. left-rooted) quiver and $\mathscr{A}$ an abelian category satisfying AB4.
  If $F\in\Phi(\mathscr{X})$ there exists a subrepresentation $F'\subseteq F$ such that
  \begin{enumerate}
    \item[(a)] $F' \in  \bigoplus f_*(\mathscr{X})$,
    \item[(b)] $F'(v) = F(v) \quad \forall u\in V^F = \{v\in Q_0 | w \to v \Rightarrow F(w) = 0\}$,
    \item[(c)] $F / F' \in \Phi(\X)$, with $\coker\phi^{F/F'}_v=\coker\phi^F_v$ when $v\not\in V^F.$
  \end{enumerate}
\end{lemma}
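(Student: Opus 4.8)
The plan is to take for $F'$ the image in $F$ of the canonical representation assembled from the ``bottom layer'' $V^F$. For $v\in V^F$ the map $\varphi^F_v$ has zero source, so $F(v)=\coker\varphi^F_v\in\mathscr X$; hence $P:=\bigoplus_{v\in V^F}f_v(F(v))$ lies in $\bigoplus f_*(\mathscr X)$. The identity maps $F(v)\to F(v)=e_v(F)$ correspond under the $(f_v,e_v)$-adjunction to morphisms $f_v(F(v))\to F$, and these assemble into a morphism $\psi\colon P\to F$ which at a vertex $w$ sends the copy of $F(v)$ indexed by a path $p\colon v\leadsto w$ to $F(w)$ along $F(p)$. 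I would set $F'=\operatorname{im}\psi$; then (a) reduces to showing that $\psi$ is a monomorphism, and since kernels in $\Rep(Q,\mathscr A)$ are computed vertexwise it is enough to show that each $\psi_w$ is monic.

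I would prove this by transfinite induction on the least ordinal $\lambda$ with $w\in V_\lambda$; this makes sense because $Q$ is left-rooted, so $Q_0=\bigcup_\lambda V_\lambda$ (I treat only this case, the general acyclic one being analogous). If $w\in V^F$, then $F(u)=0$ for every arrow $u\to w$, so $F(p)=0$ for every nontrivial path $p\colon v\leadsto w$, while the trivial path contributes $\operatorname{id}_{F(w)}$. Moreover there is no nontrivial path from a vertex $v\in V^F$ with $F(v)\ne 0$ to $w$: propagating the $\Phi(\mathscr X)$-condition along such a path (each $F(v_i)$ is a summand of the source of the monic $\varphi^F_{v_{i+1}}$, hence embeds into $F(v_{i+1})$) would force $F$ to be nonzero at the vertex just before $w$, contradicting $w\in V^F$. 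Therefore the source of $\psi_w$ is $F(w)$ up to zero summands and $\psi_w$ is an isomorphism onto $F(w)$. If $w\notin V^F$, then every path occurring in the source of $\psi_w$ is nontrivial, and writing each such path as (a path to $u$) followed by (an arrow $a\colon u\to w$) identifies the source of $\psi_w$ with $\bigoplus_{a\colon u\to w}P(u)$ and exhibits $\psi_w$ as the composite
\[
\bigoplus_{a\colon u\to w}P(u)\ \xrightarrow{\ \bigoplus_{a\colon u\to w}\psi_u\ }\ \bigoplus_{u\to w}F(u)\ \xrightarrow{\ \varphi^F_w\ }\ F(w).
\]
Each $\psi_u$ is monic by the induction hypothesis (the ordinal of $u$ being smaller), a coproduct of monics is monic since $\mathscr A$ is AB4, and $\varphi^F_w$ is monic since $F\in\Phi(\mathscr X)$; hence $\psi_w$ is monic.

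This gives $F'=\operatorname{im}\psi\cong P\in\bigoplus f_*(\mathscr X)$, which is (a), and (b) is exactly the statement that $\psi_w$ is an isomorphism onto $F(w)$ for $w\in V^F$. For (c) put $G=F/F'$. If $v\in V^F$, then $G(v)=0$ and $G(w)=0$ for all $w\to v$ (since already $F(w)=0$), so $\varphi^G_v$ is monic with cokernel $0\in\mathscr X$. If $v\notin V^F$, then under $F'\cong P=\bigoplus_{v'\in V^F}f_{v'}(F(v'))$ the map $\varphi^{F'}_v$ is the coproduct of the maps $\varphi^{f_{v'}(F(v'))}_v$, each of which is an isomorphism by \Rref{fviso} because $v\ne v'$; so $\varphi^{F'}_v$ is an isomorphism. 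Applying the snake lemma to the morphism of short exact sequences
\[
\begin{tikzcd}[column sep=small]
0 \arrow[r] & \bigoplus_{w\to v}F'(w) \arrow[r]\arrow[d, "\varphi^{F'}_v"] & \bigoplus_{w\to v}F(w) \arrow[r]\arrow[d, "\varphi^{F}_v"] & \bigoplus_{w\to v}G(w) \arrow[r]\arrow[d, "\varphi^{G}_v"] & 0 \\
0 \arrow[r] & F'(v) \arrow[r] & F(v) \arrow[r] & G(v) \arrow[r] & 0
\end{tikzcd}
\]
(whose columns are exact because $\mathscr A$ is AB4) and using that $\varphi^{F'}_v$ is invertible and $\varphi^F_v$ is monic, we obtain $\ker\varphi^G_v=0$ and $\coker\varphi^G_v\cong\coker\varphi^F_v\in\mathscr X$. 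Thus $G\in\Phi(\mathscr X)$ with $\coker\varphi^{F/F'}_v\cong\coker\varphi^F_v$ for $v\notin V^F$, which is (c). (Throughout I use the harmless convention $0\in\mathscr X$, already implicit in \Rref{fviso}.)

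The main obstacle is the monicity of $\psi$, and within it the case $w\in V^F$: one must rule out spurious summands in the source of $\psi_w$, and this is precisely where the forward-propagation of non-vanishing guaranteed by the $\Phi(\mathscr X)$-condition is used. The well-foundedness making the induction legitimate in the case $w\notin V^F$ is exactly the left-rooted (acyclic) hypothesis.
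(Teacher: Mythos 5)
Your proof is correct and follows essentially the same route as the paper: you define the same object $\bigoplus_{v\in V^F}f_v(F(v))$, use the same counit-induced map into $F$, argue its monicity by a transfinite induction in which the key step factors $\psi_w$ through $\varphi^F_w$ using AB4, and extract (c) from the snake lemma; the paper's diagram at vertex $v$ encodes exactly your factorization. The only cosmetic difference is that the paper runs the induction over the subquiver $Q'$ of vertices reachable from $V^F$ (asserting it is left-rooted with sources $V^F$), whereas you induct directly over the $V_\lambda$'s of $Q$ in the left-rooted case and defer the general acyclic case; your version in fact sidesteps having to justify left-rootedness of $Q'$.
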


\begin{proof}
  Define $F' = \bigoplus_{v \in V^F} f_v(F(v))$. We wish to prove that $F'$ is a subrepresentation and that it
  satisfies \mbox{$(a)$-$(c)$}.

  Clearly $F'$ satisfy $(a).$ To see (b) it suffices to prove, that for any non-trivial path
  $w\leadsto v$ with $v\in V^F$ we have $F(w)=0$ - 
  because then for any $v\in V^F$ we have $f_v(F(v))(v)=F(v)$ and
  $f_w(F(w))(v)=0,$ $w\neq v.$
  So let $v\in V^F$ and assume there is a path $w\overset{p}{\leadsto} w'\to v.$ Then $F(w')=0$ as $v\in V^F.$ But then also $F(w)=0$ as $F(p)$ is monic since $F\in\Phi(\X).$

  To see that $F'$ is a subrepresentation satisfying (c)
  we use the map $F'\to F$ induced by the counits $f_ve_v(F)\to F.$ If $v$ is not reachable from $V^F$ (i.e. there is no path $w\leadsto v$ with $w\in V^F$) this is trivial since then $F'(v) = 0$.
  
  So let $Q'$ be the subquiver consisting of all vertices $Q_0'$ reacheable from $V^F$ (i.e. $Q_0'=\{v\in Q_0 \mid \exists w\leadsto v, w\in V^F\}$ with arrows $\{w\to v\mid w\in Q_0'\}$). 
  We want for all $V\in Q'_0$ that there are exact sequences
\begin{enumerate}
    \item[(1)] $0 \to F'(v) \to F(v)$
    \item[(2)] $0 \to \bigoplus_{w \to v} F/F' (w) \to F/F'(v) \to \coker\phi^F_v\to 0$ when $v\not\in V^F.$
\end{enumerate}
  Since $Q$ is acyclic, $Q'$ is left-rooted with sources $V^F.$ 
  We can thus proceed by induction on the sets $V_\lambda'.$
  The case $v\in V'_1=V_F$ is taken care of by (b), so assume $(1)$ for all $w\in V'_\alpha$ and all $\alpha < \lambda\neq1$, and let $v\in V'_\lambda$. Then we have
the following commutative diagram
with exact rows and columns
\begin{equation*}
\begin{tikzcd}
    {}
    & 0
        \arrow[d] \\
    0
        \arrow[r]
    & \oplus_{w\to v} F'(w)
        \arrow[r]
        \ar[d]
    & F'(v)
        \arrow[r]
        \ar{d}
    & 0 \\
    0
        \arrow[r]
    & \oplus_{w\to v} F(w)
        \arrow[r]
        \ar[d]
    & F(v)
        \arrow[r]
    & \coker\phi^F_v
        \arrow[r]
    & 0 \\
    & \oplus_{w\to v} F/F'(w)
    \ar[d]
    &
    &
    & \\
    & 0
    &
    &
    &
\end{tikzcd}
\end{equation*}
The first row is exact as $v\notin V^F$ (see Remark~\ref{fviso}), the second as $F\in\Phi(\X)$ and the first column by induction hypothesis and the assumption that $\A$ is $AB4.$
Now $(1)$ and $(2)$ follows for $v\in V_\lambda$ by the snake lemma.
\end{proof}

\begin{lemma}\label{lem4}
Let $Q$ be an acyclic quiver, $\mathscr{A}$ an AB5-abelian category,
and let $\mathscr{X}\subseteq\A$.
Then for any $F\in \Phi(\mathscr{X})$ there exists a chain
  $0= F_0\subseteq F_1\subseteq \ldots \subseteq F_\lambda \subseteq \ldots\subseteq F$ of subrepresentations of $F$,
  such that for all ordinals $\lambda$
  \begin{enumerate}
    \item[(a)] $F_{\lambda}/F_\alpha \in \bigoplus f_*(\mathscr{X})$, if                                                $\lambda = \alpha + 1$
    \item[(b)] $F_\lambda(v) = F(v)\, \textnormal{for all}\, v \in \bigcup_{\alpha<\lambda} V^{F/F_\alpha}$
    \item[(c)] $F / F_\lambda \in \Phi(\mathscr{X})$ with $\coker\phi^{F/F_\lambda}=\coker\phi^F_v$ for $v\not\in \bigcup_{\alpha<\lambda} V^{F/F_\alpha}.$
  \end{enumerate}
Notice that $ \bigcup_{\alpha<\beta+1} V^{F/F_\alpha}=V^{F/F_\beta}$
\end{lemma}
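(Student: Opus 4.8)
The plan is to build the chain $\{F_\lambda\}$ by transfinite recursion, using Lemma~\ref{lem3} as the single-step engine. For the successor step, given $F_\alpha$ with $F/F_\alpha\in\Phi(\mathscr X)$, I would apply Lemma~\ref{lem3} to the representation $F/F_\alpha$: it produces a subrepresentation $G'\subseteq F/F_\alpha$ with $G'\in\bigoplus f_*(\mathscr X)$, with $G'(v)=(F/F_\alpha)(v)$ for all $v\in V^{F/F_\alpha}$, and with $(F/F_\alpha)/G'\in\Phi(\mathscr X)$ having the same cokernels at vertices outside $V^{F/F_\alpha}$. Then I define $F_{\alpha+1}\subseteq F$ to be the preimage of $G'$ under the quotient map $F\twoheadrightarrow F/F_\alpha$, so that $F_{\alpha+1}/F_\alpha=G'\in\bigoplus f_*(\mathscr X)$, giving (a), and $F/F_{\alpha+1}\cong (F/F_\alpha)/G'\in\Phi(\mathscr X)$, giving the first half of (c); the cokernel statement in (c) and the value statement (b) at successor stages follow by combining the corresponding clauses of Lemma~\ref{lem3} with the induction hypothesis and the displayed identity $\bigcup_{\alpha<\beta+1}V^{F/F_\alpha}=V^{F/F_\beta}$. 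For a limit ordinal $\lambda$ I set $F_\lambda=\varinjlim_{\alpha<\lambda}F_\alpha=\bigcup_{\alpha<\lambda}F_\alpha$ inside $F$; this is forced by the definition of a filtration.

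The first key step is to check that the construction does not stabilize prematurely in a way that fails to exhaust $F$ — but in fact I do not need it to exhaust $F$; the statement only asserts existence of the chain with properties (a)--(c) for all $\lambda$, and the chain is allowed to be eventually constant (once $V^{F/F_\alpha}=\varnothing$, Lemma~\ref{lem3} returns $F'=0$ and the chain stabilizes). So the recursion is well-defined for all ordinals and I just carry it through. The second key step is the limit case: I must verify that $F/F_\lambda\in\Phi(\mathscr X)$ when $F_\lambda=\varinjlim_{\alpha<\lambda}F_\alpha$. Here I use that $\mathscr A$ is AB5: $F/F_\lambda=\varinjlim_{\alpha<\lambda}F/F_\alpha$, the transition maps $F/F_\alpha\to F/F_\beta$ are epimorphisms which are isomorphisms on all vertices already "filled in", and since direct limits are computed pointwise and commute with the finite coproducts $\bigoplus_{w\to v}$ and with kernels/cokernels along a filtered system in an AB5 category, the defining monomorphism and cokernel conditions for $\Phi(\mathscr X)$ pass to the colimit. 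For property (b) at a limit stage, $F_\lambda(v)=\varinjlim_\alpha F_\alpha(v)$, and if $v\in\bigcup_{\alpha<\lambda}V^{F/F_\alpha}$ then $v\in V^{F/F_{\alpha_0}}$ for some $\alpha_0<\lambda$, whence $F_{\alpha_0+1}(v)=F(v)$ and the value is already maximal by stage $\alpha_0+1<\lambda$; for (c) the cokernel at a vertex $v$ outside $\bigcup_{\alpha<\lambda}V^{F/F_\alpha}$ is unchanged at every finite stage, hence unchanged in the limit.

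The main obstacle I anticipate is purely bookkeeping rather than conceptual: tracking which vertices have been "completed" as the ordinal grows, and confirming the identity $\bigcup_{\alpha<\beta+1}V^{F/F_\alpha}=V^{F/F_\beta}$ interacts correctly with the inductive clauses — i.e. that at stage $\beta+1$ we complete exactly the vertices of $V^{F/F_\beta}$ and no others, so that the union appearing in (b) and (c) grows in the expected monotone way and the cokernels are preserved off it. Once this is set up cleanly, properties (a)--(c) are immediate translations of the three clauses of Lemma~\ref{lem3} through the quotient correspondence $F_{\alpha+1}\leftrightarrow G'$, plus the AB5 colimit argument at limit stages; I would present the successor step and the limit step as the two cases of a single transfinite induction and invoke Lemma~\ref{lem3} verbatim in the former.
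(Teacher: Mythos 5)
Your proposal is correct and follows essentially the same route as the paper: transfinite recursion with $F_{\alpha+1}$ defined as the preimage (equivalently, the pullback) of the subrepresentation produced by Lemma~\ref{lem3} applied to $F/F_\alpha$, and $F_\lambda=\bigcup_{\alpha<\lambda}F_\alpha$ at limit ordinals with AB5 giving $\phi_v^{F/F_\lambda}=\varinjlim\phi_v^{F/F_\alpha}$ monic and the cokernels stable. Two trivia: the coproduct $\bigoplus_{w\to v}$ need not be finite, but it still commutes with filtered colimits so your limit-stage argument goes through unchanged; and ``at every finite stage'' should read ``at every stage $\alpha<\lambda$.''
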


\begin{proof}
We will construct such a filtration by transfinite induction.
$0=F_{0}$ is evident so assume $F_\alpha$ satisfying $(a)$-$(c)$ has been
constructed for all $\alpha < \lambda$

If $\lambda = \alpha + 1$ then by Lemma~\ref{lem3}
we have an $F'\subseteq F/F_\alpha$ s.t. $F' \in \bigoplus f_*(\mathscr{X})$ and s.t.
$F\pprime= (F/F_\alpha)/F'\in \Phi(\mathscr{X})$
satisfies
\[F\pprime(v) = 0 \textnormal{ for all } v\in V^{F/F_\alpha} = \{v\in Q_0 |w\to v \implies F(w) = F_\alpha (w)\}.\]
and $$\coker\phi^{F''}_v=\coker\phi^{F/F_\alpha}_v
=\coker\phi^{F}_v
\text{ for all } v\not\in V^{F/F_\alpha}$$

Now let $F_\lambda$ be the pullback
\[ \begin{tikzcd}
F_\lambda
\ar[phantom, very near start]{dr}{\ulcorner}
\ar[r]
\ar[d] &
F'
\ar[d] \\
F
\ar[r] &
F/F_\alpha
\end{tikzcd} \]
Then $(a)$ follows as
$F_\lambda / F_\alpha \cong F'$ and $(b)$ and $(c)$ follows since
$F/F_\lambda \cong F\pprime$.

If $\lambda$ is a limit ordinal, we set
\[F_\lambda = \bigcup_{\alpha < \lambda} F_\alpha\]
so that
$$F(v)=F_\lambda(v)\text{ when }
v\in \bigcup_{\alpha<\lambda}V^{F/F_\alpha}$$
Then $(a)$ is void and we get $(b)$ by noting that
when $v\in V^{F/F_\alpha}$ for some $\alpha<\lambda,$ then $F_\lambda(v)$ is the limit of a filtration eventually equal to $F(v)$
$$F_\lambda(v)=e_v\left(\bigcup_{\alpha < \lambda} F_\alpha\right)=\bigcup_{\alpha<\lambda} F_\alpha(v)=F(v).$$
To prove $(c)$ we similarly notice that
$\phi_v^{F/\dlim F_\alpha}=\dlim\phi_v^{F/F_\alpha}$ is monic for any vertex $v$ as $\A$ is AB5 and when $v\not\in \bigcup_{\alpha<\lambda}V^{F/F_\alpha}$ then
$$\coker\phi_v^{F/\dlim F_\alpha}
=\dlim\coker\phi^{F/F_\alpha}_v=  \dlim\coker\phi^F_v=\coker\phi^F_v$$
\end{proof}
The following figure shows an example of this construction.
\begin{figure}[htb]
\begin{displaymath}
\begin{array}{c@{~~~~~}c@{~~~~~}c@{~~~~~}c@{~~~~~}c}
    {
\xymatrix@R=1.1pc@C=-1.6pc{
{} & (x\oplus y\oplus z_0)^2\oplus z_1 & {}
\\
{} & (x\oplus y\oplus z_0)^2\oplus z_1 \ar[u] & {}
\\
{} & x\oplus y\oplus z_0 \ar@<0.5ex>[u] \ar@<-0.5ex>[u] & {}
\\
x \ar[ur] & {} & y \ar[ul]
}
    }
        &
    {
\xymatrix@R=1.1pc@C=-1.6pc{
{} & (x\oplus y)^2 & {}
\\
{} & (x\oplus y)^2 \ar[u] & {}
\\
{} & x\oplus y \ar@<0.5ex>[u] \ar@<-0.5ex>[u] & {}
\\
x \ar[ur] & {} & y \ar[ul]
}
    }
        &
    {
\xymatrix@R=1.1pc@C=-1.6pc{
{} & z_0^2\oplus z_1 & {}
\\
{} & z_0^2 \oplus z_1 \ar[u] & {}
\\
{} & z_0 \ar@<0.5ex>[u] \ar@<-0.5ex>[u] & {}
\\
0 \ar[ur] & {} & 0 \ar[ul]
}
    }
        &
    {
\xymatrix@R=1.1pc@C=-1.6pc{
{} & (x\oplus y\oplus z_0)^2 & {}
\\
{} & (x\oplus y\oplus z_0)^2 \ar[u] & {}
\\
{} & x\oplus y\oplus z_0 \ar@<0.5ex>[u] \ar@<-0.5ex>[u] & {}
\\
x \ar[ur] & {} & y \ar[ul]
}
    }
        &
    {
\xymatrix@R=1.1pc@C=-1.6pc{
{} & z_1 & {}
\\
{} & z_1 \ar[u] & {}
\\
{} & 0 \ar@<0.5ex>[u] \ar@<-0.5ex>[u] & {}
\\
0 \ar[ur] & {} & 0 \ar[ul]
}
    }
        \\
F
& F_1
& F/F_1
& F_2
& F/F_2
\end{array}
\end{displaymath}
\ \\
\begin{displaymath}
   F_3=F 
\end{displaymath}
\caption{Example of the construction of the subrepresentations $F_\alpha$}
%\label{example2}
\end{figure}
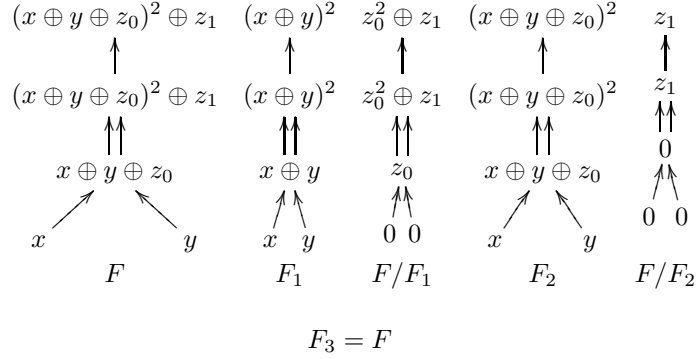

We can now proof Theorem~\ref{thma} from the introduction.
\begin{proof}[Proof of Theorem~\ref{thma}]\ 
\begin{enumerate}[$i)$]
 \item Let $F\in\Phi(\X)$ and let $\{F_\lambda\}$ be the filtration of Lemma~\ref{lem4}.
  First we show that $F_\lambda(v)=F(v)$ for all $v\in V_\lambda.$ The case $\lambda=0$ is trivial, so
  let $\lambda=\alpha+1,$ assume $F_\alpha(v)=F(v).$ and let $v\in V_\lambda.$ Then for paths $w\to v$
  we have $w\in V_\alpha$ so $F_{\alpha}(w)=F(w).$
  This precisely says that $v\in V^{F/F_\alpha}$ i.e  $F_\lambda(v)=F(v).$
  If $\lambda$ is a limit ordinal then
  $F_\lambda=\bigcup_{\alpha<\lambda} F_\lambda$ so
  $F_\lambda(v)=F(v)$ when $v\in\bigcup_{\alpha<\lambda}V_\alpha=V_\lambda.$
  Now since $Q$ is left-rooted, $F=F_\lambda$ for some $\lambda.$

  \item When $\X$ is closed under filtrations we have
  $$\Filt\bigoplus f_*(\X)\subsetref{Rem.~\ref{fviso}}
  \Filt\bigoplus \Phi(\X)\subsetref{Lem.~\ref{nylem2}}
  \Phi(\Filt\bigoplus \X)\subseteq \Phi(\X).$$
  \item When $\X$ is a set, $\Filt f_*(\X)$ is closed under filtrations \cite[Lem. 1.6]{stovicek13} hence
\begin{align*}
  \Filt\Phi(\X)
  &\subsetref{Lem.~\ref{nylem2}}\Phi(\Filt\X)
  \subseteq \Filt\bigoplus f_*(\Filt\X) \\
  &\subsetref{Lem.~\ref{nylem1}}\Filt\bigoplus\Filt f_*(\X)
   \subseteq\Filt f_*(\X)\\
  &\subsetref{Rem.~\ref{fviso}}\Filt\Phi(\X)
\end{align*}
\item This is proven similar to $iii)$. Just observe that a filtration of summands is a summand of a filtration.
  \item When $\X$ is $FP_{2.5}$ then
  $f_*(\X)$ is $FP_{2.5}$ by Lemma~\ref{star1}
  and so is $\ext f_*(\X)$ by Lemma~\ref{lemmadelta}. Hence
  $\dlim \ext f_*(\X)$ is closed under extensions by Proposition~\ref{prop1}.
  We now have
\begin{align*}
  \dlim\Phi(\X)
  &\subsetref{Lem.~\ref{nylem2}}\Phi(\dlim\X)
  \subseteq \Filt\bigoplus f_*(\dlim\X)
  \subsetref{Lem.~\ref{nylem1}} \Filt\bigoplus\dlim f_*(\X) \\
  &\subseteq \Filt\bigoplus\dlim\ext f_*(\X)
  \subseteq \dlim\ext f_*(\X)
  \subsetref{Rem.~\ref{fviso}}\dlim\ext\Phi(\X) \\
  &\subsetref{Lem.~\ref{nylem2}}\dlim\Phi(\ext\X) 
  \subseteq \dlim\Phi(\X).
\end{align*}
\end{enumerate}
\end{proof}

As mentioned in the introduction we also get $iii)$
by combining results in \cite{holm} and \cite{stovicek13} but for a more restrictive class of abelian categories.

As a special case we get the known results from
\cite{enochsflat} and \cite{enochsproj}.
\begin{lemma}\label{fgprojchar}
  Let $\A$ be an AB5-abelian category, let $Q$ be a left-rooted quiver and let $\X\subseteq\A$ be a set of projective objects.
  Then
  \begin{enumerate}
  \item[i)]   $\Phi(\bigoplus X)=\bigoplus f_*(\X)=\bigoplus\Phi(\X)$
  \item[ii)]   $\Phi(\Add X)=\Add f_*(\X)=\Add\Phi(\X)$
  \end{enumerate}
  If $\A$ has enough projectives, then
    \begin{enumerate}
    \item[iii)]   $\Phi(\Proj\A)=\Proj(\Rep(Q,\A))$
    \end{enumerate}
  If $\A$ is locally finitely presented, generated by $\proj(\A)$ (the finitely generated projective objects) then 
    \begin{enumerate}
    \item[iv)]   $\Phi(\dlim\proj(\A))=\dlim\proj((\Rep(Q,\A)))$
    \end{enumerate}
\end{lemma}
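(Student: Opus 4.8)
The plan is to derive all four parts of \Lref{fgprojchar} as consequences of \Tref{thma}, using the special features of projective objects — chiefly that a class of projectives is automatically closed under extensions (every short exact sequence ending in a projective splits) and that $\bigoplus\X=\Filt\X$ when $\X$ consists of projectives, as noted in the text. First I would observe that since $\X$ is a set of projectives, $\X$ is closed under extensions, hence closed under finite extensions, so $\ext\X=\bigoplus\X=\Filt\X=\sFilt\X$ (for summands one uses that a summand of a projective is projective, but here we want the $\bigoplus$-version). Applying \Tref{thma}(ii) with this $\X$ gives $\Phi(\X)=\Filt\bigoplus f_*(\X)$; but $f_*(\X)$ consists of projective objects of $\Rep(Q,\A)$ by \Rref{delta} (each $f_v$ is a left adjoint to an exact functor, hence preserves projectives), so $\bigoplus f_*(\X)$ is again a class of projectives and therefore $\Filt\bigoplus f_*(\X)=\bigoplus f_*(\X)$. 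This yields $\Phi(\bigoplus\X)=\Phi(\X)=\bigoplus f_*(\X)$; combining with $f_*(\X)\subseteq\Phi(\X)$ from \Rref{fviso} and the reverse inclusion $\bigoplus\Phi(\X)\subseteq\Phi(\bigoplus\X)$ (which follows from \Lref{nylem2}(iv) since $\bigoplus$ is a special filtration), one closes the circle of inclusions for part (i).

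For part (ii), the same argument runs with $\Add$ in place of $\bigoplus$: since $\X$ is a set of projectives, $\Add\X$ is again a set (up to isomorphism) of projectives closed under extensions, so \Tref{thma}(ii) applies to $\Add\X$ and gives $\Phi(\Add\X)=\Filt\bigoplus f_*(\Add\X)$, which collapses to $\Add f_*(\X)$ using that $\bigoplus f_*(\Add\X)=\Add f_*(\X)$ (as $f_*$ commutes with finite sums and summands by \Lref{nylem1}(ii)) and that a $\Filt$ of projectives is a $\bigoplus$ of projectives, hence in $\Add$. The three-term equality then follows exactly as in (i), using \Rref{fviso} and the relevant parts of \Lref{nylem1} and \Lref{nylem2}.

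For part (iii), assume $\A$ has enough projectives. The inclusion $\Proj(\Rep(Q,\A))\subseteq\Phi(\Proj\A)$ is the easy direction: a projective representation is a summand of a sum of representations $f_v(P)$ with $P$ projective (these are projective generators of $\Rep(Q,\A)$ built from the $f_v$ applied to projective generators of $\A$), and $f_v(P)\in\Phi(\Proj\A)$ by \Rref{fviso}, while $\Phi(\Proj\A)$ is closed under sums and summands by \Lref{nylem2}. For the converse, given $F\in\Phi(\Proj\A)$, part (i)/(ii) applied with $\X$ ranging over enough projectives shows $F$ lies in $\Add f_*(\Proj\A)$, hence is a summand of a sum of projective representations, hence projective; more directly, $\Phi(\Proj\A)=\Filt\bigoplus f_*(\Proj\A)$ and every object of this class is a transfinite extension of projectives, which (since projectives are closed under extensions and the filtration is continuous) is itself projective. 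The one point requiring care is that "$\Proj\A$" is a proper class, not a set, so \Tref{thma}(ii) must be applied with $\X$ a representative set of projectives sufficient to cover the relevant cokernels, or one invokes \Tref{thma}(i) directly to get the $\bigoplus f_*(\Proj\A)$-filtration.

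For part (iv), assume $\A$ is locally finitely presented and generated by $\proj(\A)$. Then $\proj(\A)\subseteq FP_n(\A)$ for every $n$ (finitely generated projectives are $FP_\infty$), in particular $\proj(\A)\subseteq FP_{2.5}(\A)$, so \Tref{thma}(v) applies and gives $\Phi(\dlim\proj\A)=\dlim\ext f_*(\proj\A)=\dlim\Phi(\proj\A)$. Since $f_*(\proj\A)$ consists of finitely generated projective representations (they are $FP_*$ by \Lref{star1}(i) and projective by \Rref{delta}, and under the generation hypothesis $\Rep(Q,\A)$ has enough such), and $\ext$ of projectives is just $\bigoplus$ of projectives within $\Add$, we get $\dlim\ext f_*(\proj\A)=\dlim\add f_*(\proj\A)$; it remains to identify this with $\dlim\proj(\Rep(Q,\A))$, i.e. to see that $\add f_*(\proj\A)$ and $\proj(\Rep(Q,\A))\cap FP_1$ generate the same direct-limit closure. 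This is exactly the statement that the finitely generated projective representations are precisely the summands of finite sums of the $f_v(P)$ with $P\in\proj(\A)$, which holds because $\Rep(Q,\A)$ is locally finitely presented with the $f_v(\proj\A)$ as a generating set of finitely generated projectives; combined with \Lref{nylem2}(iii) for the inclusion $\dlim\Phi(\proj\A)\subseteq\Phi(\dlim\proj\A)$ the three-term equality follows.

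The main obstacle is the bookkeeping in part (iii)/(iv) around replacing the proper classes $\Proj\A$ and $\proj(\A)$ by sets so that \Tref{thma} is literally applicable, and, in (iv), the verification that the finitely generated projective objects of $\Rep(Q,\A)$ are exactly $\add f_*(\proj\A)$ — everything else is a formal chase through the inclusions already packaged in \Lref{nylem1}, \Lref{nylem2}, \Rref{delta}, and \Rref{fviso}, together with the single soft fact that extension-closure, filtration-closure and summand-closure of a class of projectives all coincide with its $\Add$-closure.
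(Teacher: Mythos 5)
The high-level strategy is the same as the paper's — everything reduces to Theorem~\ref{thma} together with the observation that filtrations (and summands, and extensions) of projective objects collapse to direct sums — and parts ii), iii), iv) are essentially the paper's argument, but there is a genuine error in your treatment of part i).

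You begin i) by asserting that ``since $\X$ is a set of projectives, $\X$ is closed under extensions,'' and then derive $\ext\X=\bigoplus\X=\Filt\X=\sFilt\X$ and apply Theorem~\ref{thma}(ii) to $\X$. Neither the premise nor the equalities hold: a set of projectives is not closed under extensions (an extension of $P$ by $P'$ is $P\oplus P'$, which need not lie back in $\X$); $\ext\X$ is the class of \emph{finite} sums of $\X$-objects whereas $\bigoplus\X$ allows arbitrary sums; and $\sFilt\X$ strictly contains $\Filt\X$ because it allows summands. Consequently the hypothesis of Theorem~\ref{thma}(ii) (that $\X$ be closed under filtrations) fails for $\X$ itself, and the conclusion $\Phi(\X)=\Filt\bigoplus f_*(\X)$ is not licensed. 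The subsequent assertion $\Phi(\bigoplus\X)=\Phi(\X)$ is also false in general, since $\Phi(\bigoplus\X)$ allows cokernels that are infinite sums of $\X$-objects. The correct move is the one the paper takes: apply Theorem~\ref{thma}(iii) to the \emph{set} $\X$, giving $\Phi(\Filt\X)=\Filt f_*(\X)=\Filt\Phi(\X)$, and then use that extensions of projectives split to identify $\Filt\X=\bigoplus\X$ and $\Filt f_*(\X)=\bigoplus f_*(\X)$ (using Remark~\ref{delta} for projectivity of $f_*(\X)$); since $\Phi(\X)\subseteq\Phi(\Filt\X)=\bigoplus f_*(\X)$ consists of projectives, one also gets $\Filt\Phi(\X)=\bigoplus\Phi(\X)$. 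Your part ii), applied to $\Add\X$, avoids this problem because $\Add\X$ \emph{is} closed under extensions, filtrations and summands; though the more direct route is simply Theorem~\ref{thma}(iv) with $\sFilt\X=\Add\X$ for projectives. Parts iii) and iv) match the paper's idea (identify $\Add f_*(\Proj\A)$, resp.\ $\add f_*(\proj\A)$, with $\Proj$, resp.\ $\proj$, of $\Rep(Q,\A)$ via the generating property, then invoke Theorem~\ref{thma}(ii), resp.\ (v)); your worry about $\Proj\A$ being a proper class is unnecessary since Theorem~\ref{thma}(ii) requires only closure under filtrations, not a set, and $\proj(\A)$ is skeletally small since $\A$ is locally finitely presented.
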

\begin{proof}
  For $i)$ and $ii)$ just notice that any filtration is a sum as all extensions of projective objects are split.
  For $iii)$ and $iv)$ we notice that
  if $\X=Proj(\A)$ (resp. $\X=\proj(\A)$) generate $\A$ then 
  $f_*(\X)\subseteq\Proj(\Rep(Q,\A))$ (resp. $f_*(\X)\subseteq\proj(\Rep(Q,\A)$) generate $\Rep(Q,\A)$.
  Hence $\Add f_*(\X)=\Proj(\Rep(Q,\A).$ (resp. $\add f_*(\X)=\proj(\Rep(Q,\A)$). Now use Theorem~\ref{thma} $ii)$ (resp. v))
\end{proof}
As noted in the introduction, $iii)$ can be seen by using cotorsion pairs as in \cite{holm}.
In the rest of the paper we study the Gorenstein situation.

% In the following we will in any abelian category $\mathscr{A}$ define the small
% Gorenstein projective objects $\Gproj(\mathscr{A})$ and with the help of Pontryagin style dual define the weakly
% Gorenstein flat objects $\wGFlat(\mathscr{A})$ (those with Gorenstein injective dual) and the
% Gorenstein flat objects, $\GFlat(\mathscr{A})$.

% We will then describe the classes $\varinjlim \Gproj(\Rep(Q,\mathscr{A})), \wGFlat(\Rep(Q,\mathscr{A}))$ and $\GFlat(\Rep(Q,\mathscr{A}))$ and see when they agree.

\section{Gorenstein projective objects}
We will now define the small (i.e. $FP_{2.5}$) Gorenstein projective objects and describe their direct limit closure using Theorem~\ref{thma}.

\begin{defn}
Let $\mathscr{A}$ be an abelian category and $\mathscr{P}$ a class of objects in $\mathscr{A}$.
A complete $\mathscr{P}$-resolution is an exact sequence
with components in $\mathscr{P}$
that stays exact after applying $\textnormal{Hom}(P,-)$
and $\textnormal{Hom}(-,P)$ for any $P\in\mathscr{P}$.

We say that $X$ has a complete $\mathscr{P}$-resolution if it is a syzygy in a complete
$\mathscr{P}$-resolution, i.e. if there exists a complete $\mathscr{P}$-resolution
	\[ \ldots \to P_{1} \to P_0 \to P_{-1} \to \ldots \]
s.t. $X=\textnormal{ker}(P_0\to P_{-1}).$
\end{defn}

We say that $X\in\mathscr{A}$ is Gorenstein projective (resp. Gorenstein injective) if it has a complete $\mathscr{P}$-resolution where
$\mathscr{P}$ is the class of all projective (resp. injective) objects.

We let $\GProj(\mathscr{A})$ (resp. $\GInj(\mathscr{A})$)) denote the Gorenstein projective
(resp. Gorenstein injective) objects  of $\mathscr{A}$ and let
$\Gproj(\mathscr{A}) = \GProj(\mathscr{A}) \cap FP_{2.5}(\mathscr{A})$.

\begin{remark}\label{remarksquare}
Notice that the class $\GProj(\A)$ is closed under extensions see \cite[thm 2.5]{holm04}. Hence so is $\Gproj(\A)$ by Lemma~\ref{lemmadelta}. 
\end{remark}

Dually to the already mentioned characterization of the projective representations, we have a characterization of the injective representations. This was first noted in \cite{enochsinj} and generalized to abelian categories in \cite{holm}.
A similar description is possible for Gorenstein projective and Gorenstein injective objects as proven first for modules over Gorenstein rings in \cite{enochsinj} and then modules over arbitrary rings in \cite[Thm. 3.5.1]{eshragi13}. This proof work in any abelian category with enough projective (resp. injective) objects.
We collect the results here for ease of reference.
\begin{thm}\label{thm8} Let $Q$ be a left-rooted quiver, $\A$ an abelian category with enough projective objects, and $\B$ a category with enough injective objects. Then
\begin{align*}
  \Proj(\Rep(Q,\A))    &= \Phi(\Proj(\A))  \\
  \GProj(\Rep(Q,\A))   &= \Phi(\GProj(\A)) \\
  \Inj(\Rep(Q\op,\B))  &= \Psi(\Inj(\B))   \\
  \GInj(\Rep(Q\op,\B)) &= \Psi(\GInj(\B))
\end{align*}
where for $\Y\subseteq\B$ we define
\[
  \Psi(\Y) =\{ F \in \Rep(Q\op,\B)
    \mid
    \forall v\in Q_0: \psi^F_v \textnormal{ epi and } \ker \psi^F_v \in \X
  \}
\] and
\[ \psi^F_v = F(v) \to \prod_{v\to w} F(w).\]
\end{thm}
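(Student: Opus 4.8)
The statement collects four characterizations, but by duality the real content is two: $\Proj(\Rep(Q,\A)) = \Phi(\Proj(\A))$ and $\GProj(\Rep(Q,\A)) = \Phi(\GProj(\A))$. The injective statements follow by applying the projective statements to $\A\op$ with the quiver $Q\op$ and translating: $e_v$ on $\Rep(Q\op,\B)$ becomes $e_v\op$, the left adjoint $f_v$ becomes the right adjoint $g_v$ of $e_v$ on $\Rep(Q\op,\B)$, and the class $\Phi$ built from cokernels of $\bigoplus_{w\to v}$ turns into the class $\Psi$ built from kernels of $\prod_{v\to w}$. So I would first remark that the last two lines are the duals of the first two and spend the rest of the argument on the projective cases. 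The projective equality $\Proj(\Rep(Q,\A)) = \Phi(\Proj(\A))$ is already \Lref{fgprojchar}~iii) in the excerpt (proved via \Tref{thma}~ii), noting $f_*(\Proj\A)$ generates $\Rep(Q,\A)$ and $\Add = \Filt\bigoplus$ on projectives), so I can just cite it.

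For the Gorenstein projective equality the plan is the standard two inclusions. For $\supseteq$: given $F\in\Phi(\GProj(\A))$, I build a complete projective resolution of $F$ in $\Rep(Q,\A)$. Using that $\A$ has enough projectives I choose, for each $v$, a projective resolution of $\coker\varphi_v^F$ and assemble them via the $f_v$'s; the key point is that for $F\in\Phi(\X)$ one has short exact sequences $0\to\bigoplus_{w\to v}F(w)\to F(v)\to\coker\varphi_v^F\to 0$ compatible along the left-rooted stratification $\{V_\lambda\}$, so a transfinite/inductive splicing yields a projective resolution $P_\bullet\to F$ all of whose syzygies again lie in $\Phi(\GProj(\A))$ (here one uses that $f_v$ sends projectives to projectives, \Rref{delta}, and that $\Phi$ interacts well with the $V_\lambda$-filtration as in \Lref{lem3}–\Lref{lem4}). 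Totally analogously one builds the coresolution direction $F\to P^{-1}\to P^{-2}\to\cdots$ using complete projective resolutions of each $\coker\varphi_v^F$. Exactness after $\Hom(Q,-)$ and $\Hom(-,Q)$ for $Q\in\Proj(\Rep(Q,\A)) = \Phi(\Proj(\A))$ is checked vertexwise using the adjunction $\Hom(f_v X,-)\cong\Hom(X,e_v(-))$ and the fact that evaluation is exact and sends the assembled complex at vertex $v$ to (a finite iterate of) the complete $\GProj$-resolutions chosen downstairs.

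For $\subseteq$: given $G\in\GProj(\Rep(Q,\A))$ with complete projective resolution $P_\bullet$, apply $e_v$. Since $e_v$ is exact and preserves projectives' images... more carefully, $e_v$ need not send projectives of $\Rep(Q,\A)$ to projectives of $\A$, so instead I use that $P_\bullet\in\Proj(\Rep(Q,\A)) = \Phi(\Proj(\A))$ and, reading off the defining short exact sequences of $\Phi$ at each vertex, extract from $P_\bullet$ a complete $\Proj(\A)$-resolution having $G(v)$, or rather $\coker\varphi_v^G$, as a syzygy — this shows $\coker\varphi_v^G\in\GProj(\A)$. One also needs $\varphi_v^G$ monic, which follows since $G$, being a syzygy in a complex of objects in $\Phi(\Proj(\A))$, inherits the $\Phi$-shape; concretely the class of syzygies of complexes over $\Phi(\Proj(\A))$ is contained in $\{F: \varphi_v^F \text{ monic } \forall v\}$ because $\Phi(\Proj(\A))$ is closed under the relevant kernels, using left-rootedness and induction on $V_\lambda$. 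Hence $G\in\Phi(\GProj(\A))$.

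The main obstacle is the $\supseteq$ direction: correctly assembling, by transfinite induction along the left-rooted filtration $\{V_\lambda\}$, a genuine complete projective resolution in $\Rep(Q,\A)$ out of the vertexwise complete $\GProj$-resolutions of the cokernels, while keeping every syzygy inside $\Phi(\GProj(\A))$ and verifying $\Hom$-exactness against all of $\Phi(\Proj(\A))$ — the bookkeeping mirrors \Lref{lem3} and \Lref{lem4} but must be carried out simultaneously in both (resolution and coresolution) directions. Since this is exactly the argument of \cite[Thm.~3.5.1]{eshragi13} (for modules) and works verbatim in any abelian category with enough projectives, I would present it as a reference-backed sketch rather than reproving it in full.
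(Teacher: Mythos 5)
Your approach is essentially the paper's own: the paper gives no proof of Theorem~\ref{thm8}, presenting it purely as a collection of results already available in \cite{holm} (projective/injective cases in abelian categories) and \cite{enochsinj}, \cite{eshragi13} (Gorenstein cases), and you likewise defer to those references, via duality, after a plausibility sketch. One small caveat: invoking \Lref{fgprojchar}~iii) for the projective equality silently imports an AB5 hypothesis that Theorem~\ref{thm8} as stated does not carry; citing \cite{holm} directly, as the paper does, avoids that mismatch.
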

As mentioned in the proofs, left-rooted is not needed for the inclusions ($\subseteq$) in the non-Gorenstein cases. We note that $f_v$ preserves Gorenstein projectivity:
\begin{lemma}\label{star2}
Suppose $\mathscr{A}$ satisfies AB4${}^*$ or has enough projective objects or $Q$ is locally path-finite. If $X\in\mathscr{A}$ is Gorenstein projective,
then so is $f_v(X)\in \textnormal{Rep}(Q,\mathscr{A})$.
\end{lemma}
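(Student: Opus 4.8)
The plan is to produce, for a Gorenstein projective $X \in \mathscr{A}$, a complete projective resolution of $f_v(X)$ in $\Rep(Q,\mathscr{A})$ by applying $f_v$ to a complete projective resolution of $X$. First I would pick a complete projective resolution
\[ P_\bullet = (\cdots \to P_1 \to P_0 \to P_{-1} \to \cdots) \]
of projective objects of $\mathscr{A}$ with $X = \ker(P_0 \to P_{-1})$, exact and remaining exact after $\Hom_{\mathscr{A}}(P,-)$ and $\Hom_{\mathscr{A}}(-,P)$ for every projective $P$. Under any of the three hypotheses, $f_v$ exists (it exists whenever $\mathscr{A}$ has coproducts, which follows from AB4${}^*$ plus abelian, or from enough projectives in the relevant categories, or is given directly when $Q$ is locally path-finite) and is exact: $f_v$ is a left adjoint so preserves cokernels, and since $f_v(X)(w) = \bigoplus_{v \leadsto w} X$ is computed pointwise by a coproduct, exactness of $f_v$ reduces to exactness of coproducts, i.e. to AB4 in $\mathscr{A}$ — which holds under each hypothesis (note a locally path-finite quiver makes the relevant sums finite, hence exact even without AB4). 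Thus $f_v(P_\bullet)$ is an exact complex with components $f_v(P_i)$, which are projective in $\Rep(Q,\mathscr{A})$ by Remark~\ref{delta}, and $f_v(X) = \ker(f_v(P_0) \to f_v(P_{-1}))$ by exactness.

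The remaining step is to check that $f_v(P_\bullet)$ stays exact after applying $\Hom_{\Rep(Q,\mathscr{A})}(Q',-)$ and $\Hom_{\Rep(Q,\mathscr{A})}(-,Q')$ for every projective $Q' \in \Rep(Q,\mathscr{A})$. For the covariant direction, by adjunction $\Hom_{\Rep(Q,\mathscr{A})}(f_v(Y), -) \cong \Hom_{\mathscr{A}}(Y, e_v(-))$; more usefully, one reduces to the case $Q'$ a summand of a coproduct of objects $f_w(P)$ with $P$ projective in $\mathscr{A}$ — since those generate the projectives of $\Rep(Q,\mathscr{A})$ by Remark~\ref{delta}/Remark~\ref{fviso} — and then $\Hom_{\Rep(Q,\mathscr{A})}(f_w(P), f_v(P_\bullet))$ is a direct sum (indexed by paths $v \leadsto w'$, controlled either by AB4${}^*$-adjacent completeness or by local path-finiteness) of complexes of the form $\Hom_{\mathscr{A}}(P, (P_\bullet)^{\text{shifted/copies}})$, each exact because $P_\bullet$ is a complete projective resolution in $\mathscr{A}$. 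For the contravariant direction, $\Hom_{\Rep(Q,\mathscr{A})}(f_v(P_\bullet), g_w(P))$ (using the right adjoint $g_w$ of $e_w$, available under AB4${}^*$ or local path-finiteness) $\cong \Hom_{\mathscr{A}}(e_w f_v(P_\bullet), P)$, again a (possibly infinite) product of shifted copies of $\Hom_{\mathscr{A}}(P_\bullet, P)$, exact by completeness of $P_\bullet$ together with the relevant exactness of products.

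I expect the main obstacle to be the bookkeeping of which adjoint functors and which (co)completeness/exactness properties are available under each of the three separate hypotheses, and in particular handling the contravariant $\Hom(-, Q')$ condition: a projective object of $\Rep(Q,\mathscr{A})$ need not be of the form $f_w(P)$, so one must either argue via a generating family plus closure under summands and (co)products, or invoke the structure theory of projectives in functor categories. A clean way around this is to observe that it suffices to test exactness of $\Hom(f_v(P_\bullet), -)$ and $\Hom(-, f_v(P_\bullet))$ against the objects $f_w(P)$ themselves (since these generate, and the relevant $\Hom$-functors convert the generating coproducts/summands into products/summands of exact complexes, which remain exact under AB4 or AB4${}^*$ as appropriate), at which point everything collapses to exactness of $\Hom_{\mathscr{A}}(-,-)$ applied to $P_\bullet$, which is exactly the completeness of the chosen resolution in $\mathscr{A}$. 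The locally path-finite case is the easiest since all the indexing sets become finite; the AB4${}^*$ and enough-projectives cases require the exactness of infinite products and the existence of enough projectives to run the generating-family reduction, respectively.
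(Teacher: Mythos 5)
Your strategy --- apply $f_v$ to a complete projective resolution $P_\bullet$ of $X$ --- is the paper's, and the first half (that $f_v(P_\bullet)$ is exact with projective components, since $f_v$ is exact under each hypothesis and preserves projectives by Remark~\ref{delta}) is correctly handled. The covariant test is also fine, though simpler than you make it: once $f_v(P_\bullet)$ is known to be exact, $\Hom(Q', f_v(P_\bullet))$ is automatically exact for any projective $Q'$; no reduction to a generating family is needed.

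The genuine gap is in the contravariant test, i.e.\ showing $\Hom(f_v(P_\bullet), Q')$ is exact for every projective $Q'$ in $\Rep(Q,\mathscr{A})$. You write down the adjunction $\Hom(f_v(Y), -) \cong \Hom(Y, e_v(-))$ and then set it aside as less useful --- but it is exactly the key step. It gives $\Hom(f_v(P_\bullet), Q') \cong \Hom(P_\bullet, e_v(Q'))$, which is exact precisely when $e_v(Q')$ is projective in $\mathscr{A}$, since $P_\bullet$ is a complete projective resolution. The whole contravariant half of the lemma therefore reduces to the single claim that \emph{$e_v$ preserves projectives}, and the three alternative hypotheses in the statement are each there precisely to secure this: under AB4${}^*$ or local path-finiteness, $e_v$ has an exact right adjoint $g_v$, and with enough projectives one uses $\Proj(\Rep(Q,\mathscr{A})) \subseteq \Phi(\Proj\mathscr{A})$ from Theorem~\ref{thm8}. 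Your proposed substitute --- testing $\Hom(f_v(P_\bullet),-)$ against the objects $f_w(P)$ ``since these generate'' --- does not go through: for a projective $Q'$ that is a summand of a coproduct $\bigoplus_i f_{w_i}(P_i)$, the complex $\Hom(f_v(P_\bullet), \bigoplus_i f_{w_i}(P_i))$ is not a product or coproduct of the $\Hom(f_v(P_\bullet), f_{w_i}(P_i))$, because $\Hom$ does not commute with coproducts in its second variable, so exactness of the latter does not yield exactness of the former. Likewise the auxiliary computation of $\Hom(f_v(P_\bullet), g_w(P))$ is beside the point, since $g_w(P)$ is generally not projective; verifying exactness there says nothing about the contravariant condition. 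So the hard half of the completeness check is never actually closed, and the observation that the three hypotheses unify under ``$e_v$ preserves projectives'' --- which is the real content of the lemma --- is missing.
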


\begin{proof}
Let $P_\bullet$ be a complete projective resolution of $X$.
Then $f_v(P_\bullet)$ is exact and has projective components by \Rref{delta}.

Obviously $\Hom(P, f_v(P_\bullet))$ is exact for any projective $P$, and
$\Hom(f_v(P_\bullet), P)\cong \Hom(P_\bullet, e_v(P))$ is exact if $e_v$ preserves projective objects.

If $\mathscr{A}$ has enough projective objects then $\textnormal{Proj}(\textnormal{Rep}(Q,\mathscr{A})) \subseteq \Phi(\textnormal{Proj}\mathscr{A})$. (\Tref{thm8})
If $\mathscr{A}$ satisfies AB4${}^*$ or $Q$ is locally path-finite, then as in the proof of Lemma~\ref{star1} $e_v$ has an exact right-adjoint (see\cite[3.6]{holm}).
In all cases $e_v$ preserves projective objects.
\end{proof}

Using these and \Tref{thma} we have

\begin{proof}[Proof of Theorem~\ref{thmb}]
  By Theorem \ref{thma} and Remark~\ref{remarksquare} we have
\[
\varinjlim \Phi(\Gproj(\A)) = \Phi(\varinjlim \Gproj(\A)) = \varinjlim \ext f_*(\Gproj(\A)) \]
Now $f_v$ preserves smallness (Lemma \ref{star1} $(i)$) and Gorenstein projectivity (Lemma \ref{star2}), so
\[
\varinjlim \ext f_*(\Gproj(\A)) \subseteq \varinjlim \Gproj(\Rep(Q,\A)).\]
If $Q$ is locally path-finite and target-finite,
Theorem \ref{thm8} and Lemma \ref{star1}$(iii)$ give
\begin{align*}
\Gproj(\Rep(Q,\A))
    = \Phi(\GProj(\A)) \cap FP_{2.5}(\Rep(Q,\A)) 
    \subseteq \Phi(\Gproj(\A))
\end{align*}
so$$\dlim \Gproj(\Rep(Q,\A)) \subseteq\dlim \Phi(\Gproj(\A)).$$
If instead $\varinjlim \Gproj(\A) = \varinjlim \GProj(\A)$ then by Theorem~\ref{thm8} 
\begin{align*}
\varinjlim \Gproj(\Rep(Q,\A))
    & \subseteq \varinjlim \GProj(\Rep(Q,\A)) \\
    & \subseteq \varinjlim \Phi(\GProj(\A)) \\
    & \subseteq \Phi(\varinjlim \GProj(\A)) \\
    & = \Phi(\varinjlim \Gproj(\A)). \qedhere
\end{align*}
\end{proof}

\section{Weakly Gorenstein flat objects}
In this section we will first explain what we mean by an abstract Pontryagin dual.
It mimics the behavior of $\Ab(-,\Q/\Z)$.
We will then define and describe the weakly Gorenstein flat objects and show when they equal $\varinjlim gP$.

Recall that a functor $F:\mathscr{C}\to\mathscr{D}$ \emph{creates exactness} when
$A\to B\to C$ is exact in $\mathscr{C}$ if and only if $FA\to FB\to FC$ is exact in $\mathscr{D}$.
\begin{defn}
A Pontryagin dual is a contravariant adjunction between abelian categories that creates exactness.
I.e. let $\mathscr{C},\mathscr{D}$ be abelian categories.
A \emph{Pontryagin dual} between $\mathscr{C}$ and $\mathscr{D}$ consists of two functors
\[
(-)^+:\mathscr{C}^\textnormal{op}\to \mathscr{D}, \quad
(-)^+: \mathscr{D}^\textnormal{op}\to \mathscr{C}
\]
that both create exactness together with a natural ismorphism
$\mathscr{C}(A,B^+)\cong \mathscr{D}(B,A^+)$.

We call it \emph{$\otimes$-compatible} if there is a \emph{continuous} bifunctor $\otimes: \mathscr{D}\times \mathscr{C}\to\mathscr{K}$
to some abelian category $\mathscr{K}$ s.t.
\[
\mathscr{C}(A,B^+)\cong \mathscr{K}(B\otimes A, E) \cong \mathscr{D}(B,A^+)
\]
for some injective cogenerator $E\in\mathscr{K}$ (i.e. $\mathscr K(-,E)$ creates exactness). Here continuous means that it respects direct limits.
\end{defn}
Note that $\Ab(-,\Q/\Z)\colon\Ab^{\text{op}}\to\Ab$ is a Pontryagin dual compatible with the usual tensor product $\otimes\colon\Ab\times\Ab\to\Ab$ with $E = \Q/\Z$.
\begin{ex}\label{dualex}
${}$
As the following examples shows, (abstract) Pontyagin duals abound.
\begin{enumerate}[1)] 
  \item Let $(\mathscr{C}, [-,-], \otimes, 1)$ be a symmetric monoidal abelian category. Let $E\in \mathscr{C}$ be an injective cogenerator s.t. also $[-,E]$ creates exactness.
  Then $[-,E]$ is a $\otimes$-compatible Pontryagin dual,
  and any $\otimes$-compatible Pontryagin dual is of this form.
  It will thus also satisfy 
$$
  [A,B^+]\cong (B\otimes A)^+ \cong [B, A^+].
$$
  This example includes the motivating example
  $\mathscr{C}=\textnormal{Ab}, E = \Q/\Z$ as well as $\mathscr{C}=\textnormal{Ch}(\textnormal{Ab}), E=\Q/\Z$
  (i.e. $\Q/\Z$ in degree 0 and 0 otherwise).
  \item If $(-)^+:\mathscr{C}^\textnormal{op}\to\mathscr{D}$ is a Pontryagin dual it induces a
  Potryagin dual $\textnormal{Fun}(\A, \mathscr{C})^\textnormal{op} \to \textnormal{Fun}(\A^\textnormal{op},\mathscr{D})$
  for any small category $\A$ by applying $(-)^+$ component-wise.

  If $ (-)^+:\mathscr{C}^\textnormal{op}\to\mathscr{D}$ is compatible with $\otimes: \mathscr{D} \times \mathscr{C}\to \mathscr{K}$,
  then $(-)^+:\textnormal{Fun}(\A, \mathscr{C})^\textnormal{op} \to \textnormal{Fun}(\A^\textnormal{op},\mathscr{D})$
  is compatible with $\otimes\colon \textnormal{Fun}(\A^\textnormal{op},\mathscr{D})\times\textnormal{Fun}(\A, \mathscr{C})\to\mathscr K$ where
  $G\otimes F$ is the coend of
  $$\A^{\text{op}}\times\A\to \mathscr D\times\mathscr C\to\mathscr K$$
  i.e. the coequalizer of the two obvious maps
  $$
  \bigoplus_{a\to b}G(b)\otimes F(a)\rightrightarrows\bigoplus_{a\in\A}G(a)\otimes F(a)$$
  provided the required colimits exists. (see Oberst and R{\"o}hrl \cite{oberst70} or Mac Lane \cite[IX.6]{maclane} for this construction).  

  This includes the case $\Rep(Q,\mathscr{C})$ for any quiver $Q$.
  \item As in 2), any Pontryagin dual $(-)^+:\mathscr{C}^\textnormal{op}\to\mathscr{D}$
  gives a component-wise Pontryagin dual
  $\textnormal{Ch}(\mathscr{C})^\textnormal{op} \to \textnormal{Ch}(\mathscr{D})$
  of chain-complexes. If $(-)^+:\mathscr{C}^\textnormal{op}\to\mathscr{D}$ is compatible with
  $\otimes: \mathscr{D}\otimes \mathscr{C}\to \mathscr{K}$ with injective cogenerator $E\in\mathscr K$,
  then $(-)^+:\textnormal{Ch}(\mathscr{C})^\textnormal{op} \to \textnormal{Ch}(\mathscr{D})$
  is compatible with the total tensor product
  $\textnormal{Ch}(\mathscr{D})\times \Ch(\mathscr{C})\to \Ch(\mathscr{K})$, the injective cogenerator beeing $E$ in degree
  $0$ and $0$ otherwise.

  With $\mathscr{C}=\textnormal{Ab}, (-)^+=[-,\Q/\Z]$ this construction gives the standard one in $\Ch(\textnormal{Ab})$
  as mentioned in 1).
  \item If $\mathscr{C}=\mathscr{D}$ is symmetric monoidal with a $\otimes$-compatible Pontryagin dual as in 1) then the dual of a map $A\otimes X \overset{m}{\to} X$ gives a map
$X^+\otimes A\overset{m^+}{\to} X^+$ via the isomorphisms
\[
\Hom(X^+, (A\otimes X)^+ ) \cong \Hom(X^+, [A, X^+])\cong \Hom(X^+\otimes A, X^+).
\]
One can check that if $A$ is a ring object and $m$ is a left multiplication then $m^+$ is a
right multiplication and we get a Pontryagin dual $(-)^+\colon (A$-Mod$)^\textnormal{op}\to$ Mod-$A$ from the category
of left $A$-modules to the category of right $A$-modules.

This is $\otimes$-compatible with $-\otimes_A-\colon$  $($Mod-$A)\times(A$-Mod$)\to\mathscr C$, where $X\otimes_A Y$ is the coequalizer
of the two obvious maps
\[\begin{tikzcd} X\otimes A\otimes Y \ar[shift left]{r} \ar[shift right]{r} & X\otimes Y\end{tikzcd}.\]
(See Pareigis \cite{pareigis77} for the details of this construction).
This gives the standard Pontryagin dual in $R$-Mod for any ring $R$ (i.e. a ring object in Ab),
and by 3) the standard one in $\Ch(R\textnormal{-Mod})$. It also gives the character module of differential graded $A$-modules (DG-$A$-Mod)
when $A$ is a differential graded algebra, i.e. a ring object in $\Ch(\Ab)$ (see Avramov, Foxby and Halperin \cite{avramov}).
By 2) we also get the one in \cite[Cor 6.7]{enochsflat} for $\Rep(Q,R\textnormal{-Mod})$ for any
ring $R$ and quiver $Q$.
\end{enumerate}
\end{ex}

\begin{defn}\label{defn9}
Let $(-)^+:\mathscr{C}^\textnormal{op}\to \mathscr{D}$ be a Pontryagin dual. We say that
\begin{itemize}
  \item $X\in \mathscr{C}$ is \emph{flat} if $X^+$ is injective in $\mathscr{D}$,
  \item $X\in \mathscr{C}$ is \emph{weakly Gorenstein flat} ($\wGFlat$) if $X^+$ is Gorenstein injective.
  \item $F\in\textnormal{Ch}(\mathscr{C})$ is a \emph{complete flat resolution} if
    $F^+$ is a \emph{complete injective resolution} in $\textnormal{Ch}(\mathscr{D})$,
  \item $X\in\mathscr{C}$ is \emph{Gorenstein flat} ($\GFlat$) if it has a (i.e. is a syzygy in a) complete flat resolution,
\end{itemize}
\end{defn}
Gorenstein flat always implies weakly Gorenstein flat. The other implication requires one to construct a complete flat resolution when the dual has a complete injective resolution. We will look at when this is possible in the next section.

With $\otimes$-compatibility these notions agree with the standard notions.

\begin{prop}
If $(-)^+:\mathscr{C}^\textnormal{op}\to \mathscr{D}$ is $\otimes$-compatible,
then
\begin{enumerate}[1)]
  \item $F\in \mathscr{C}$ is flat if and only if $-\otimes F$ is exact.
  \item $F_\bullet$ is a complete flat resolution if and only if $F_i$ is flat
  for all $i$ and $I\otimes F_\bullet$ is exact for all injective objects $I\in \mathscr{D}$.
\end{enumerate}
\end{prop}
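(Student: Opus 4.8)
The plan is to prove both statements by unravelling the definition of $\otimes$-compatibility, which gives us the chain of natural isomorphisms
\[
\mathscr{C}(A, F^+) \cong \mathscr{K}(A \otimes F, E) \cong \mathscr{D}(A, ?)
\]
wait — more precisely, for a Pontryagin dual we have $\mathscr{C}(A,B^+)\cong\mathscr{K}(B\otimes A,E)\cong\mathscr{D}(B,A^+)$, and the key point is that $E$ is an injective cogenerator of $\mathscr{K}$, so $\mathscr{K}(-,E)$ creates exactness, and $(-)^+$ on both sides creates exactness by the definition of a Pontryagin dual.

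For part 1), I would argue as follows. Since $(-)^+\colon\mathscr{D}^{\mathrm{op}}\to\mathscr{C}$ creates exactness and $(-)^+\colon\mathscr{C}^{\mathrm{op}}\to\mathscr{D}$ does too, and since an object $I\in\mathscr{D}$ is injective iff $\mathscr{D}(-,I)$ is exact, I want to relate exactness of $-\otimes F$ to injectivity of $F^+$. Fix a short exact sequence $0\to A'\to A\to A''\to 0$ in $\mathscr{D}$. Applying the natural isomorphism $\mathscr{D}(-,F^+)\cong\mathscr{K}(-\otimes F,E)$, the sequence $\mathscr{D}(A'',F^+)\to\mathscr{D}(A,F^+)\to\mathscr{D}(A',F^+)$ is identified with $\mathscr{K}(A''\otimes F,E)\to\mathscr{K}(A\otimes F,E)\to\mathscr{K}(A'\otimes F,E)$. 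Now $F^+$ is injective iff the first sequence is exact (with a zero on the right) for all such input, iff the second is, iff (since $\mathscr{K}(-,E)$ creates exactness and is contravariant) $A'\otimes F\to A\otimes F\to A''\otimes F\to 0$ is exact, i.e. iff $-\otimes F$ is right exact in a way that also preserves monomorphisms — but $-\otimes F$ is a left adjoint-type continuous bifunctor, so I should check it is automatically right exact; the content is exactness on the left, i.e. that $A'\otimes F\to A\otimes F$ is monic. So $F^+$ injective $\iff$ $-\otimes F$ exact, which is part 1).

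For part 2), recall $F_\bullet$ is a complete flat resolution iff $F_\bullet^+$ is a complete injective resolution in $\Ch(\mathscr{D})$, meaning: $F_\bullet^+$ is exact, has injective components, and stays exact after applying $\mathscr{D}(I,-)$ for every injective $I\in\mathscr{D}$ (the $\mathscr{D}(-,I)$-condition is automatic for a complex of injectives). By part 1), the components $F_i^+$ are injective iff each $F_i$ is flat. Exactness of $F_\bullet^+$ is equivalent, since $(-)^+\colon\mathscr{C}^{\mathrm{op}}\to\mathscr{D}$ creates exactness, to exactness of $F_\bullet$ — but this is subsumed once we know $I\otimes F_\bullet$ is exact for all injective $I$, by taking $I$ to range over a cogenerating family, or it is part of the "complete flat resolution" bookkeeping; I would state it cleanly by noting exactness of $F_\bullet$ follows from exactness of $I\otimes F_\bullet$ for a cogenerator $I$ (using that $(-)\otimes F_\bullet$ detects exactness against $E$). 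Finally, the condition that $\mathscr{D}(I,-)$ leaves $F_\bullet^+$ exact: using $\mathscr{D}(I,F_n^+)\cong\mathscr{K}(I\otimes F_n,E)$ naturally in $n$, this says $\mathscr{K}(I\otimes F_\bullet,E)$ is exact, which by the injective cogenerator $E$ (so $\mathscr{K}(-,E)$ creates exactness) is equivalent to $I\otimes F_\bullet$ being exact. Assembling these equivalences gives part 2).

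The main obstacle I anticipate is bookkeeping around \emph{which} direction of exactness is automatic. Specifically: (a) that $-\otimes F$ is always right exact (continuity/cocontinuity of $\otimes$ in the first variable is assumed, so it preserves cokernels, hence is right exact), so the only real content of "exact" is left-exactness; and (b) untangling the definition of "complete injective resolution" — it bundles three conditions (exactness, injective components, $\Hom(I,-)$-acyclicity) and I must match each against a tensor condition, being careful that exactness of $I\otimes F_\bullet$ for injective $I$ already forces exactness of $F_\bullet$ itself (via creating exactness of $(-)^+$, or via a cogenerator argument). Neither is deep, but the proof should name the relevant adjunction isomorphism at each step so the reader can see it is purely formal.
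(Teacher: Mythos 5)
Your argument takes the same route as the paper: both parts reduce to the defining isomorphism $\mathscr{C}(A,B^+)\cong\mathscr{K}(B\otimes A,E)\cong\mathscr{D}(B,A^+)$ together with the fact that $\mathscr{K}(-,E)$ \emph{creates} exactness because $E$ is an injective cogenerator. For part 1) the detour through left- versus right-exactness is unnecessary: since $\mathscr{K}(-,E)$ creates exactness, the chain $F^+$ injective $\Leftrightarrow \mathscr{D}(-,F^+)$ exact $\Leftrightarrow \mathscr{K}(-\otimes F,E)$ exact $\Leftrightarrow -\otimes F$ exact handles both directions simultaneously, which is precisely what the paper does; no auxiliary observation about $\otimes$ preserving cokernels is needed.

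In part 2) you correctly notice something the paper's own proof is silent about: the definition of a complete injective resolution requires $F_\bullet^+$ itself to be exact, and this is a third condition alongside injective components and $\mathscr{D}(I,-)$-acyclicity. Your proposed repair is not quite right, however. An injective cogenerator $I$ of $\mathscr{D}$ detects exactness via the \emph{contravariant} $\mathscr{D}(-,I)$, while the hypothesis $I\otimes F_\bullet$ exact translates, through $\mathscr{K}(-,E)$, only into exactness of the \emph{covariant} $\mathscr{D}(I,F_\bullet^+)\cong\mathscr{C}(F_\bullet,I^+)$; to deduce exactness of $F_\bullet^+$ from this one would need the injectives of $\mathscr{D}$ to form a generating class, which is a genuine extra assumption. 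You should either add exactness of $F_\bullet$ as an explicit hypothesis on the right-hand side of 2), or state the extra condition under which it is implied; as written, the ``cogenerator argument'' sentence does not close the gap. Note, though, that the paper's proof of 2) has exactly the same omission, so this is arguably an imprecision in the statement rather than a defect unique to your attempt.
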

\begin{proof}
1) We have the following equivalences
\begin{align*}
  F \textnormal{ is flat} &
  \Leftrightarrow F^+ \textnormal{ is injective} \\
  & \Leftrightarrow \Hom(-, F^+) \textnormal{ is exact} \\
  & \Leftrightarrow \Hom(-\otimes F, E) \textnormal{ is exact for some injective cogenerator } E \\
  & \Leftrightarrow -\otimes F \textnormal{ is exact}
\end{align*}
2)
Let $F_\bullet\in\Ch(\mathscr C).$
Then $F_i^+$ is injective iff $F_i$ is flat
and $\Hom(I, F^+_\bullet)$ is exact iff $\Hom(I\otimes F_\bullet, E)$
is exact for some injective cogenerator $E$ by $\otimes$-compatibility, see \Eref{dualex} 3). But this happens iff $I\otimes F_\bullet$ is exact.
\end{proof}

The following lemma shows how the classes $\Phi(\X)$ (Definition~\ref{defphi}) and $\Psi(\X)$ (see Theorem~\ref{thm8}) behave with respects to the Pontryagin duals. The proofs are straightforward.
\begin{lemma}\label{lemphipsi}
  Let  $(-)^+\colon\A\to\mathscr B$ be a Pontryagin dual between abelian categories, let $Q$ a quiver, let $\X\subseteq\A$ and $\Y\subseteq\B.$
  Then $$\Phi(\X)^+\subset\Psi(\X^+).$$
    In particular if  $\X=\{X\in\A\mid X^+\in\mathscr Y\}$ then
$$F\in \Phi(\mathscr{X}) \Leftrightarrow F^+\in \Psi(\mathscr{Y}).$$
  If $Q$ is target-finite then
  $$\Psi(\Y)^+\subseteq\Phi(\Y^+).$$

\end{lemma}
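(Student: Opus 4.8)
The plan is to unwind the definitions of $\Phi$ and $\Psi$ into short exact sequences, dualise, and check that the canonical maps match up. First I would record the one genuinely formal point on which everything rests: a contravariant self-adjunction sends coproducts to products, so for $F\in\Rep(Q,\A)$ and $v\in Q_0$ we have $\bigl(\bigoplus_{w\to v}F(w)\bigr)^+\cong\prod_{w\to v}F(w)^+$, and under this identification the dual of the canonical map $\varphi^F_v\colon\bigoplus_{w\to v}F(w)\to F(v)$ is precisely the canonical map $\psi^{F^+}_v\colon F^+(v)\to\prod_{v\to w\text{ in }Q\op}F^+(w)$ of the dual representation $F^+\in\Rep(Q\op,\mathscr B)$ (which has $F^+(v)=F(v)^+$ and arrows reversed). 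This holds because both maps are determined by their components at the arrows into $v$, and each such component is $F(w\to v)^+$; I would isolate this as a short preliminary remark.

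Granting that, the first inclusion is immediate: $F\in\Phi(\X)$ says that for every $v$ the sequence $0\to\bigoplus_{w\to v}F(w)\xrightarrow{\varphi^F_v}F(v)\to\coker\varphi^F_v\to0$ is exact with $\coker\varphi^F_v\in\X$; since $(-)^+$ creates exactness it is in particular exact, so dualising yields $0\to(\coker\varphi^F_v)^+\to F^+(v)\xrightarrow{\psi^{F^+}_v}\prod_{v\to w}F^+(w)\to0$, i.e. $\psi^{F^+}_v$ is epi with kernel $(\coker\varphi^F_v)^+\in\X^+$. Hence $F^+\in\Psi(\X^+)$.

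For the ``in particular'' statement with $\X=\{X\mid X^+\in\mathscr Y\}$, one direction is the first inclusion together with $\X^+\subseteq\mathscr Y$. For the converse I would use that $(-)^+$ also \emph{reflects} exactness: from $F^+\in\Psi(\mathscr Y)$ we get $0\to K_v\to F^+(v)\xrightarrow{\psi^{F^+}_v}\prod_{v\to w}F^+(w)\to0$ with $K_v\in\mathscr Y$; since $\psi^{F^+}_v=(\varphi^F_v)^+$ is epi, reflecting exactness through $(-)^+$ forces $\varphi^F_v$ to be monic, and then dualising the short exact sequence defining $\coker\varphi^F_v$ and comparing with the one above identifies $(\coker\varphi^F_v)^+\cong K_v\in\mathscr Y$, so $\coker\varphi^F_v\in\X$ and $F\in\Phi(\X)$.

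The last inclusion is the same computation read in the other direction, and this is where target-finiteness enters. For $G\in\Psi(\mathscr Y)\subseteq\Rep(Q\op,\mathscr B)$ we have $G^+\in\Rep(Q,\A)$ by \Eref{dualex} 2), and the products $\prod_{v\to w\text{ in }Q\op}$ run over the arrows into $v$ in $Q$, which are finitely many precisely because $Q$ is target-finite; being finite, they are also coproducts, and $(-)^+$ carries them back to finite coproducts. Thus dualising $0\to K_v\to G(v)\xrightarrow{\psi^G_v}\prod G(w)\to0$ with $K_v\in\mathscr Y$ produces $0\to\bigoplus_{w\to v}G^+(w)\xrightarrow{\varphi^{G^+}_v}G^+(v)\to K_v^+\to0$, so $G^+\in\Phi(\mathscr Y^+)$. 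The only real obstacle throughout is the bookkeeping: keeping straight the passage $Q\leftrightarrow Q\op$, verifying once and for all that the dual of $\varphi^F_v$ is $\psi^{F^+}_v$, and noting that it is target-finiteness — not local path-finiteness — that makes the relevant index sets finite, since $\psi$ and $\varphi$ are built from arrows rather than paths.
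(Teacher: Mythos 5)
Your proof is correct and follows exactly the paper's route: the key identity $(\varphi^F_v)^+=\psi^{F^+}_v$, dualising the short exact sequence defining $\coker\varphi^F_v$, and using target-finiteness to turn the product into a finite coproduct for the reverse inclusion. You spell out more of the bookkeeping (including the use of reflection of exactness for the monicity of $\varphi^F_v$ in the biconditional), but the argument is the same one the paper summarises as ``straightforward.''
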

\begin{proof}
  For the first assertion we must notice, that   $(\phi_v^F)^+=\psi_v^{F^+}$ for all $F\in\Rep(Q,\A)$ and all $v\in Q_0$.
  For the second, that  $(\psi_v^G)^+=\phi_v^{G^+}$ for all $G\in\Rep(Q\op,\B)$ and all $v\in Q_0$ when $Q$ is target-finite.
  This is because the product in the definition of $\psi_v^G\colon G(v)\to\prod_{v\to w\text{ in }Q\op}G(w)$ is finite when $Q\op$ is source-finite, thus it is a sum and so is the dual. 
\end{proof}
This immediately gives the following:
\begin{proof}[Proof of Theorem~\ref{thmc}]
  \begin{align*}
    F\in\Flat(\Rep(Q,\A))&\overset{Def.~\ref{defn9}}\iff F^+\in\Inj(\Rep(Q^{\text{op}},\B)) \\
  &\overset{Thm.~\ref{thm8}}\iff F^+\in\Psi(\Inj(\B)) \\
  &\overset{Lem.~\ref{lemphipsi}}\iff F\in\Phi(\Flat(\A))\end{align*}
The same proof works in the Gorenstein situation.
\end{proof}
\begin{remark}
This gives a straightforward proof of \cite[Thm 3.7]{enochsflat} using the
characterization of the injective representations from \cite{enochsinj}.
\end{remark}
Combining this with Theorem \ref{thmb} we get:
\begin{cor}\label{newprop20}
Let $(-)^+:\A^{\text{op}}\to \mathscr{B}$ be a Pontryagin dual, let $Q$ be a left-rooted quiver and assume
\begin{itemize}
\item $\A$ has enough projective objects
\item $\mathscr{B}$ has enough injective objects
\item $Q$ is target-finite and locally path-finite, or $\varinjlim \Gproj(\mathscr{A}) = \varinjlim \GProj(\mathscr{A}).$
\end{itemize}
If $\varinjlim \Gproj = \wGFlat$ in $\A$ then the same is true in $\Rep(Q,\A)$.
\end{cor}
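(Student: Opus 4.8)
The plan is to chain together the three main theorems that precede this statement, using the hypothesis $\varinjlim \Gproj(\A) = \wGFlat(\A)$ together with the standing assumptions on $\A$, $\mathscr{B}$ and $Q$. First I would invoke Theorem~\ref{thmb}: the assumptions (enough projectives in $\A$, $\A$ locally finitely presented -- which we should note follows from it being Grothendieck with enough projectives and having a set of $FP_1$ generators, or is available in the situations considered -- and the dichotomy that $Q$ is target-finite and locally path-finite or $\varinjlim \Gproj(\A) = \varinjlim \GProj(\A)$) give us
\[
\Phi(\varinjlim \Gproj(\A)) = \varinjlim \Gproj(\Rep(Q,\A)).
\]
Next I would bring in Theorem~\ref{thmc}: a Pontryagin dual $(-)^+\colon\A^{\mathrm{op}}\to\mathscr{B}$ with $\mathscr{B}$ having enough injectives (hence in particular a category with enough injective objects) yields
\[
\wGFlat(\Rep(Q,\A)) = \Phi(\wGFlat(\A)).
\]

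With these two identities in hand, the corollary is a one-line substitution. Starting from the right-hand side,
\[
\wGFlat(\Rep(Q,\A)) = \Phi(\wGFlat(\A)) = \Phi(\varinjlim \Gproj(\A)) = \varinjlim \Gproj(\Rep(Q,\A)),
\]
where the middle equality is exactly the hypothesis $\wGFlat(\A) = \varinjlim \Gproj(\A)$ applied inside $\Phi(-)$. This establishes $\varinjlim \Gproj = \wGFlat$ in $\Rep(Q,\A)$, which is the claim. I should also remark that for Theorem~\ref{thmb} to apply we need $\A$ locally finitely presented; since the corollary does not list this explicitly, I would either add it to the hypotheses or note that in the relevant cases (e.g.\ $\A = \Rmod$, or more generally a locally finitely presented category) it is automatic, matching the phrasing used for Theorem~\ref{thmb}.

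The only genuine subtlety -- and the step I would be most careful about -- is verifying that the hypotheses of Corollary~\ref{newprop20} really do supply everything Theorems~\ref{thmb} and \ref{thmc} demand. Theorem~\ref{thmc} needs the target category of the dual to have enough injectives: that is literally assumed. Theorem~\ref{thmb} needs enough projectives in $\A$ (assumed), $\A$ locally finitely presented (to be added or noted as automatic in context), and the target-finite/locally-path-finite-or-$\varinjlim\Gproj=\varinjlim\GProj$ dichotomy (assumed). Once these bookkeeping points are confirmed, there is no further obstacle: the corollary is purely a matter of composing the displayed equalities, and no new argument about Gorenstein resolutions or filtrations is required beyond what Theorems~\ref{thma}--\ref{thmc} already provide.
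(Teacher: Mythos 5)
Your proof is correct and follows the same route as the paper, which simply combines Theorems~\ref{thmb} and \ref{thmc} via the substitution chain you display. Your observation that the statement of the corollary omits the \emph{locally finitely presented} hypothesis required by Theorem~\ref{thmb} is a genuine and valid point: the paper leaves it implicit, and for a self-contained statement it should be added (or one should note, as you do, that it holds automatically in the intended applications such as $\A=\Rmod$).
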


\section{Gorenstein flat objects}
We will now find conditions on the category $\A$ and the quiver $Q$ s.t.
\[ \wGFlat(\Rep(Q, \A)) = \GFlat(\Rep (Q,\A)). \]
Firstly we have the following known result:
\begin{prop}\cite[Prop. 3.6]{holm04}\label{gorensteinflat1}
Let $R$ be a right coherent ring.
Then $\wGFlat(\Rmod) = \GFlat(\Rmod)$
\end{prop}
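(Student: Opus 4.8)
The plan is to prove the assertion as an equivalence: a left $R$-module $M$ is Gorenstein flat if and only if its character module $M^+$ is Gorenstein injective as a right $R$-module. I would first dispose of the inclusion $\GFlat(\Rmod)\subseteq\wGFlat(\Rmod)$, which requires no coherence hypothesis at all. Given $M$ Gorenstein flat, pick a complete flat resolution $\mathbf F$ with $M$ a syzygy; by Definition~\ref{defn9} the dual complex $\mathbf F^+$ is a complete injective resolution of right $R$-modules, and dualizing the short exact sequences expressing $M$ as a syzygy of $\mathbf F$ exhibits $M^+$ as a (co)syzygy of $\mathbf F^+$, so $M^+$ is Gorenstein injective.

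For the substantive inclusion $\wGFlat(\Rmod)\subseteq\GFlat(\Rmod)$ I would start from a module $M$ with $M^+$ Gorenstein injective and construct a complete flat resolution of $M$ in two halves, to be spliced at $M$. For the \emph{left half} I would simply take a projective resolution $\cdots\to P_1\to P_0\to M\to 0$. For any injective right $R$-module $I$ there is a natural isomorphism $\operatorname{Tor}^R_n(I,M)^+\cong\Ext^n_R(I,M^+)$, obtained by applying $(-)^+$ to $Q_\bullet\otimes_R M$ for a projective resolution $Q_\bullet\to I$ and using the adjunction $(Q\otimes_R M)^+\cong\Hom_R(Q,M^+)$. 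Since $M^+$ is Gorenstein injective and $I$ has finite injective dimension, this vanishes for $n\ge 1$, and as $\Q/\Z$ is a faithful cogenerator we get $\operatorname{Tor}^R_n(I,M)=0$ for all $n\ge 1$ and all injective $I$; hence the left half stays exact after applying $I\otimes_R-$.

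For the \emph{right half} I need an exact sequence $0\to M\to F^0\to F^1\to\cdots$ of flat modules that remains exact after $I\otimes_R-$ for every injective right $R$-module $I$. The idea is to take a complete injective resolution $\mathbf E$ of $M^+$ and apply $(-)^+$: over any ring the dual of an FP-injective (in particular injective) right module is a flat left module (Baer's criterion together with the $\operatorname{Tor}$--$\Ext$ duality for finitely presented modules), so $\mathbf E^+$ is an exact complex of flat left modules with $M^{++}$ as a syzygy; one then checks that $\mathbf E^+$ is actually a complete flat resolution, so that $M^{++}\in\GFlat(\Rmod)$. Finally I would descend along the pure monomorphism $M\hookrightarrow M^{++}$: the quotient $M^{++}/M$ inherits the Tor-vanishing against injectives, and over a right coherent ring $\GFlat(\Rmod)$ has enough closure (under direct limits, pure submodules, and kernels of epimorphisms between Gorenstein flat modules) to force $M\in\GFlat(\Rmod)$; splicing the two halves at $M$ produces the desired complete flat resolution.

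The hard part will be the right half. Verifying that $\mathbf E^+$ is a complete flat resolution rests on a $\operatorname{Hom}$--$\otimes$ interchange of the form $I\otimes_R N^+\cong\Hom_R(I,N)^+$ for $I$ injective, which is exactly where right coherence is indispensable — it is essentially Chase's theorem, reducing the injective $I$ to finitely presented modules along which the interchange is natural — and the passage from $M^{++}$ back to $M$ is similarly delicate and depends on the closure properties of $\GFlat$ over coherent rings. This entire inclusion is the content of \cite[Prop.~3.6]{holm04}, which one may of course simply cite.
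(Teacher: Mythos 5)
The paper does not actually prove this proposition---it cites \cite[Prop.~3.6]{holm04} verbatim and then, in the next lemma (Lemma~\ref{gorensteinflat3}), abstracts the argument of Christensen \cite[Thm.~6.4.2]{christensen00}. Your closing sentence (``one may of course simply cite'') is therefore perfectly aligned with the paper. But your sketched proof takes a genuinely different route from the cited one, and that route has real gaps, so it is worth comparing carefully.

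Your left half is fine: $\operatorname{Tor}^R_n(I,M)^+\cong\Ext^n_R(I,M^+)$ (compute $\operatorname{Tor}$ from a projective resolution of $I$), Gorenstein injectivity of $M^+$ kills the right side for $n\ge1$, and faithfulness of $\Q/\Z$ gives $\operatorname{Tor}^R_n(I,M)=0$. No coherence needed there.

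The right half is where the argument breaks. First, the interchange $I\otimes_R N^+\cong\Hom_R(I,N)^+$ that you invoke is \emph{false} for non-finitely-presented $I$, coherence or not: both sides are natural in $I$ and agree for finitely presented $I$, but $I\otimes N^+$ commutes with the filtered colimit $I=\varinjlim I_\alpha$ while $\Hom(I,N)^+=(\varprojlim\Hom(I_\alpha,N))^+$ does not turn into $\varinjlim\Hom(I_\alpha,N)^+$---$(-)^+$ turns colimits into limits, not conversely. (It happens that $\mathbf E^+$ \emph{is} a complete flat resolution, but for a different reason: the dual of any exact sequence is pure-exact, each $E_i^+$ is flat, and $I^+$ is pure-injective, so $\Hom(\mathbf E^+,I^+)\cong(I\otimes\mathbf E^+)^+$ is exact---and this needs no coherence at all.) Second, and more seriously, all of this produces a complete flat resolution of $M^{++}$, not of $M$, and the descent along $M\hookrightarrow M^{++}$ is exactly where you wave your hands. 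To conclude $M\in\GFlat$ from $M^{++}\in\GFlat$ via this pure monomorphism you would need $\GFlat(\Rmod)$ to be closed under pure submodules over right coherent rings. That closure is \cite[Prop.~3.22]{holm04}, which in Holm's paper is \emph{deduced from} Prop.~3.6---the very statement you are proving. So your route is circular unless you import much heavier, later machinery (e.g.\ \v Saroch--\v Stov\'i\v cek) that renders the coherence hypothesis moot.

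The route the paper actually follows (and abstracts in Lemma~\ref{gorensteinflat3}) avoids the double dual entirely. Right coherence enters only through Chase's theorem, which makes $\Flat(\Rmod)$ closed under products and hence preenveloping. One then builds the right half of the complete flat resolution \emph{one step at a time}: take a flat preenvelope $\varphi\colon X\to F$ of the weakly Gorenstein flat $X$ (it is monic because there is a mono $X\to X^{++}\to E^+$ with $E\to X^+$ an injective epi and $E^+$ flat), and check via the long exact $\Hom(I,-)$ sequence that $\coker\varphi$ is again weakly Gorenstein flat, so the process iterates. This stays entirely at the level of $M$ and never needs closure of $\GFlat$ under pure submodules. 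You should rework your right half along these lines, or simply leave the citation as the proof---but as written, the interchange lemma and the descent step are both unjustified.
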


Looking more closely at the proof (see Christensen \cite[Thm. 6.4.2]{christensen00}) we arrive at \Lref{gorensteinflat3}. We include a full proof, as our notions of flatness differ.
\begin{lemma}\label{gorensteinflat2}
Let $\A$ be an abelian category.
If $0\to X'\to J\to X \to 0$ is exact
and $J$ is injective (or just Gorenstein injective), $X$ is Gorenstein injective and $\Ext^1(I, X')=0$
for all injective  $I\in \mathscr{A}$.
Then $X'$ is Gorenstein injective.
\end{lemma}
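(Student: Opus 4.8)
The plan is to build a complete injective resolution having $X'$ as a syzygy, by splicing together two half-resolutions: a "left" part coming from the Gorenstein injectivity of $X$ (via the given short exact sequence) and a "right" part built by hand so that the total complex stays $\Hom(I,-)$-exact for injective $I$. First I would record what "$X$ Gorenstein injective" gives: an exact complex $J^\bullet$ of injectives with $X = \operatorname{coker}(J^{-1}\to J^0)$ that remains exact after $\Hom(I,-)$ for every injective $I$. Pulling back the front end of this resolution along $J \twoheadrightarrow X$ is not quite what I want; instead, since $X$ is Gorenstein injective, pick an exact sequence $0 \to X \to J^0 \to J^1 \to \cdots$ of injectives that is $\Hom(I,-)$-exact. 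Splicing this with $0 \to X' \to J \to X \to 0$ produces an exact sequence
\[
  0 \to X' \to J \to J^0 \to J^1 \to \cdots
\]
with injective terms; I must check it is still $\Hom(I,-)$-exact. Exactness of $\Hom(I,-)$ at the $J^i$ for $i\ge 0$ is inherited from the chosen coresolution of $X$; exactness at $J$ uses $\Ext^1(I,X')=0$ together with the fact that $\Hom(I,X)\to\Hom(I,J^0)$ is injective (from $\Hom(I,-)$-exactness of $0\to X\to J^0\to\cdots$). So this gives the "right half" of a complete injective resolution of $X'$.

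For the "left half" — an exact $\Hom(I,-)$-exact complex $\cdots \to Q_1 \to Q_0 \to X' \to 0$ of injectives — I would use that $X$ already sits inside a complete injective resolution, so there is an exact $\Hom(I,-)$-exact sequence $\cdots \to J^{-2} \to J^{-1} \to X \to 0$ of injectives. Combining with the surjection structure, let $X'$ map into $J$ via the given sequence; then I want to resolve $X'$ on the left by injectives in a way compatible with the resolution of $X$. Here is where the hypothesis "$X'\hookrightarrow J$ with $J$ (Gorenstein) injective" is used: form the pullback of $J \to X \leftarrow J^{-1}$ to splice, or more directly observe that the left part of a complete injective resolution of $X$, together with the kernel computations, furnishes a complex $\cdots \to J^{-2}\to J^{-1}\to J \to X' \to 0$? — this is not right as stated, so instead I would argue: since $X$ is Gorenstein injective and Gorenstein injectives are closed under the relevant operations, and since $0\to X'\to J\to X\to 0$ has $J$ injective, a standard dimension-shift / horseshoe-type argument produces the left resolution of $X'$ directly from a left resolution of $X$ by prepending $J$ appropriately after taking a pullback along $J^{-1}\twoheadrightarrow X$.

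Finally I would splice the left half and the right half at $X'$ to obtain a doubly-infinite exact complex of injectives with $X'$ as a syzygy, and verify it is $\Hom(I,-)$-exact in every degree: the degrees coming from the right half and the left half are handled above, and the only new spot is around $X'$ itself, which is exactly where $\Ext^1(I,X')=0$ is needed to lift maps across the splice. The main obstacle I anticipate is the construction and $\Hom(I,-)$-exactness of the \emph{left} half-resolution of $X'$: unlike the right half, which comes almost for free from the coresolution of $X$, producing injective objects mapping \emph{onto} $X'$ while keeping $\Hom(I,-)$ exact requires carefully exploiting that $J$ is injective and performing a pullback against the left part of $X$'s complete resolution; getting the $\Hom(I,-)$-exactness at the splice point is where the hypothesis on $\Ext^1$ does its work, and I would double-check the diagram chase there (using the long exact sequence in $\Ext(I,-)$ applied to $0\to X'\to J\to X\to 0$ and the vanishing $\Ext^1(I,X')=0$, plus $\Ext^1(I,J)=0$ since $J$ is injective or Gorenstein injective).
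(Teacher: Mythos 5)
The paper's own ``proof'' is a one-line citation: the statement is the dual of \cite[2.11]{holm04}, whose argument for modules transfers to any abelian category. You attempt to reprove it from scratch by splicing two half-resolutions, which is a reasonable plan, but your attempt has a minor misstatement in the easy half and a genuine unresolved gap in the hard half.

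On the right half: splicing $0\to X'\to J\to X\to 0$ with a $\Hom(I,-)$-exact injective coresolution $0\to X\to J^0\to J^1\to\cdots$ of $X$ is correct, but you attribute the use of $\Ext^1(I,X')=0$ to the wrong degree. After applying $\Hom(I,-)$, exactness at $J$ only needs left-exactness of $\Hom(I,-)$ together with the monicity of $\Hom(I,X)\to\Hom(I,J^0)$; it is exactness at $J^0$ that requires $\Hom(I,J)\to\Hom(I,X)$ to be surjective, which is where $\Ext^1(I,X')=0$ enters (exactness at $J^i$ for $i\ge 1$ is then genuinely inherited). This is a fixable slip.

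On the left half: here the gap is real, and it is exactly the nontrivial content of the lemma. You need an exact, $\Hom(\Inj(\A),-)$-exact complex $\cdots\to E_1\to E_0\to X'\to 0$ of injectives. The pullback $P=J^{-1}\times_X J$ along $J\twoheadrightarrow X\twoheadleftarrow J^{-1}$ does not do this: it yields $0\to X'\to P\to J^{-1}\to 0$, so $X'$ appears as a \emph{sub}object of $P$, not as a quotient, and the auxiliary splitting $P\cong J\oplus\ker(J^{-1}\to X)$ does not produce a surjection from an injective onto $X'$. A horseshoe-type lemma also does not apply, because you are not given two resolutions of the outer terms of a short exact sequence with $X'$ in the middle. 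The $\Ext^1$ hypothesis gives, essentially for free, that the \emph{right}-hand injective coresolution of $X'$ is $\Hom(\Inj,-)$-exact, but it says nothing direct about constructing the left tail. You flag this yourself (``this is not right as stated''), and the sketch that follows does not close the hole. To complete the argument one needs the actual content of Holm's 2.11 (dualized), not a generic dimension shift; as the proposal stands it is an outline with the key step missing.
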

\begin{proof}
This is the dual of \cite[2.11]{holm04}. The proof is for modules but works
in any abelian category.
\end{proof}

Now recall that a class $\X\subseteq\mathscr C$ is \emph{preenveloping} if for every $M\in\mathscr C$ there is a map $\phi\colon M\to X$ called the \emph{preenvelope} to some
$X\in\X$ s.t. every map from $M$ to an object in $\X$ factors through $\phi.$ It is monic whenever there exists some monomorphism from $M$ to an object of $\X.$
\begin{lemma}\label{gorensteinflat3}
Let $(-)^+:\mathscr{C}^\textnormal{op} \to \mathscr{D}$ be a Pontryagin dual and assume
\begin{itemize}
  \item[(1)] $\Inj(\D)^+\subseteq\Flat(\C)$%$E\in \D$ injective $\implies E^+$ flat.
  \item[(2')] $\textnormal{Flat}(\mathscr{C})$ is preenveloping.
  \item[(3')] $\mathscr{C}$ has enough flat objects.
\end{itemize}
Then any weakly Gorenstein flat object of $\mathscr{C}$ is Gorenstein flat.
\end{lemma}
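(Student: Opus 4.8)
The plan is to mimic the classical argument (as in Christensen's book) for showing that a weakly Gorenstein flat module over a right coherent ring is Gorenstein flat, but phrased abstractly using the Pontryagin dual. Let $X\in\mathscr C$ be weakly Gorenstein flat, so $X^+$ is Gorenstein injective in $\mathscr D$. By definition of Gorenstein injectivity, there is a complete injective resolution in $\mathscr D$ exhibiting $X^+$ as a cosyzygy; in particular we get an exact sequence $0\to X^+\to I^0\to I^1\to\cdots$ with each $I^j$ injective and which stays exact after applying $\Hom(I,-)$ for every injective $I$. The first half of a complete flat resolution of $X$ is easy: by $(3')$, $\mathscr C$ has enough flat objects, so we may build an exact complex $\cdots\to F_1\to F_0\to X\to 0$ with each $F_i$ flat. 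Dualizing and using that $(-)^+$ creates exactness and sends flats to injectives, this glues with the resolution of $X^+$ to give the "left half" of the needed complete injective resolution of $X^+$. So the real work is constructing the \emph{right} half of a complete flat resolution, i.e. an exact sequence $0\to X\to F^0\to F^{-1}\to\cdots$ with flat terms whose dual is exact after $\Hom(I,-)$ for all injective $I\in\mathscr D$.

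The key step is to produce, for a weakly Gorenstein flat $X$, a short exact sequence $0\to X\to F\to X'\to 0$ with $F$ flat and $X'$ again weakly Gorenstein flat; then one iterates. First I would take a flat preenvelope $\phi\colon X\to F$, which exists by $(2')$. I need $\phi$ to be monic: since $X^+$ is Gorenstein injective it embeds into an injective $I^0$, and dualizing the epimorphism $I^{0+}\to X^{++}$ (composed with the canonical map $X\to X^{++}$, which is monic because $(-)^+$ creates exactness hence is faithful on monos) gives a monomorphism from $X$ into $I^{0+}$; by $(1)$ $I^{0+}$ is flat, so a monomorphism from $X$ to a flat object exists and the preenvelope $\phi$ is monic. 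Set $X'=\coker\phi$, so we have $0\to X\to F\to X'\to 0$ exact. Dualizing gives $0\to X'^+\to F^+\to X^+\to 0$ with $F^+$ injective and $X^+$ Gorenstein injective. To invoke \Lref{gorensteinflat2} (with the roles $X'^+$, $F^+$, $X^+$ playing $X'$, $J$, $X$) I must check $\Ext^1(I,X'^+)=0$ for every injective $I\in\mathscr D$. This is exactly where the preenvelope property is used: applying $\Hom(I,-)$ to $0\to X'^+\to F^+\to X^+\to 0$, surjectivity of $\Hom(I,F^+)\to\Hom(I,X^+)$ translates, via the adjunction $\Hom_{\mathscr D}(I,Y^+)\cong\Hom_{\mathscr C}(Y,I^+)$ and $(1)$ (so $I^+$ is flat), into the statement that every map $X\to I^+$ factors through $\phi\colon X\to F$ — which holds because $\phi$ is a flat preenvelope and $I^+$ is flat. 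Hence $\Ext^1(I,X'^+)=0$, and \Lref{gorensteinflat2} gives that $X'^+$ is Gorenstein injective, i.e. $X'$ is weakly Gorenstein flat.

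Having this, I iterate: starting from $X_0=X$, repeatedly form $0\to X_n\to F^{-n}\to X_{n+1}\to 0$ with $F^{-n}$ flat and $X_{n+1}$ weakly Gorenstein flat, and splice to get $0\to X\to F^0\to F^{-1}\to F^{-2}\to\cdots$. Combined with the left half $\cdots\to F_1\to F_0\to X\to 0$ built from $(3')$, this yields a totally acyclic-looking complex $F_\bullet$ of flat objects with $X$ as a syzygy. It remains to verify that $F_\bullet$ is a \emph{complete} flat resolution in the sense of \Dref{defn9}, i.e. that $F_\bullet^+$ is a complete injective resolution in $\mathscr D$: its terms $F_i^+$ are injective by $(1)$ (or directly since $F_i$ is flat), it is exact because $(-)^+$ creates exactness, and it stays exact after $\Hom(I,-)$ for all injective $I$ — on the left half this is automatic since injectivity of the $F_i^+$ makes $\Hom(I,-)$ exact there, and on the right half it follows from the vanishing $\Ext^1(I,X_n^+)=0$ established at each stage together with the Gorenstein injective resolution of $X^+$ we started with, a standard dimension-shifting/long-exact-sequence bookkeeping. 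Therefore $X$ is Gorenstein flat.

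I expect the main obstacle to be the bookkeeping in the last paragraph: making precise that the spliced complex $F_\bullet^+$ is \emph{fully} exact after $\Hom(I,-)$ in all degrees, not just that $\Ext^1(I,X_n^+)=0$ at each cosyzygy. One must combine the complete injective resolution witnessing Gorenstein injectivity of $X^+$ (which handles the far-right tail) with the finitely many short exact sequences produced by the preenvelope construction, and argue by dimension shifting that all the relevant $\Hom(I,-)$-complexes are acyclic; this is routine but needs care to state cleanly in the abstract Pontryagin-dual setting. A secondary point requiring attention is ensuring the flat preenvelope is monic, handled above by producing a monomorphism from $X$ into the flat object $I^{0+}$ using $(1)$.
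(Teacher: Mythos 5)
Your overall strategy matches the paper's: build the left half of the complete flat resolution from (3'), use (2') to construct the right half one step at a time via flat preenvelopes, show each cosyzygy has Gorenstein injective dual by the vanishing $\Ext^1(I,X'^+)=0$ together with \Lref{gorensteinflat2}, and finally splice. The preenvelope step and the $\Ext$-vanishing argument via the adjunction are exactly the paper's.

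There is, however, a genuine error in your argument that the flat preenvelope $\phi\colon X\to F$ is monic. You start from a monomorphism $X^+\hookrightarrow I^0$ into an injective and dualize it to the epimorphism $(I^0)^+\to X^{++}$. But then you cannot compose this with the unit $X\to X^{++}$ to obtain a map $X\to (I^0)^+$: both arrows have codomain $X^{++}$, so they do not compose, and the adjunction $\mathscr C(X,(I^0)^+)\cong\mathscr D(I^0,X^+)$ requires a map $I^0\to X^+$, which you do not have. The fix (and what the paper does) is to use the \emph{other} half of Gorenstein injectivity: take an epimorphism $E\to X^+$ from an injective $E$, dualize to get a monomorphism $X^{++}\hookrightarrow E^+$ (since $(-)^+$ creates exactness), and precompose with the unit $\eta_X\colon X\to X^{++}$, which is monic because its dual $X^{+++}\to X^+$ is split epi by the triangle identity and $(-)^+$ detects monos. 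Then $X\to E^+$ is a monomorphism into a flat object by (1), so $\phi$ is monic. A smaller imprecision: in the final paragraph you say $\Hom(I,-)$-exactness on the left half is ``automatic since the $F_i^+$ are injective'' — it is not automatic from term-wise injectivity; what makes it work is that $0\to X^+\to F_0^+\to F_1^+\to\cdots$ is an injective resolution of the Gorenstein injective object $X^+$, and any injective resolution of a Gorenstein injective object stays exact under $\Hom(I,-)$ by a comparison-theorem argument, which is the justification the paper gives.
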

\begin{proof}
Let $X$ be weakly Gorenstein flat, i.e. $X^+$ is Gorenstein injective.
Our goal is to construct a complete flat resolution for $X$.
The left part of such a resolution is easy when $\mathscr{C}$ has enough flat objects. As $X^+$
is Gorenstein injective it has an injective resolution $I_\bullet$
s.t. $\Hom(J,I_\bullet)$ is
exact for any injective $J$.
But then this holds for any injective resolution of $X^+$.
In particular $F^+_\bullet$, where $F_\bullet$ is a flat
(left-) resolution of $X$ which exists when $\mathscr{C}$ has enough flats.

For the right part we construct the resolution one piece at a time by constructing for any
weakly Gorenstein flat $X\in\C$ a short exact sequence
$0\to X\to F\to X'\to 0$ where $F$ is flat s.t. $\Ext^1(I, X'^+)=0$
for any injective $I\in\mathscr D$. Then $X'^+$ is Gorenstein injective by Proposition~\ref{gorensteinflat1} and this process can be continued to give a
flat (right-) resolution $F_\bullet$
of $X$ s.t. $\Hom(I, F_\bullet^+)$ is exact for any injective $I$.

So let again $X\in\C$ be weakly Gorenstein flat,
and let $\varphi: X\to F$ be a flat preenvelope.
We first show that $\phi$ is monic by showing that there exists some monomorphism from $X$ to a flat object.
Since $X^+$ is Gorenstein injective there exist an epimorphism $E\to X^+$
from some injective $E\in\D$. But then
$X\to X^{++} \to E^+$ is monic, since  $(-)^+$ creates exactness and $X^{+++}\to X^+$ is split epi by the unit-counit relation. Thus $\phi$ is monic since $E^+$ is flat by (1). We thus have a short exact sequence
$0\to X\overset{\phi}{\to} F\to X'\to 0$ inducing for any injective $I\in\mathscr D$ a long exact sequence
\[
0\to \Hom(I,X'^+)\to \Hom(I,F^+)\overset{\varphi^*}{\to} \Hom(I, X^+)
    \to \Ext^1(I, X'^+) \to \Ext^1(I, F^+).
\]
Now $\varphi^*$ is epi as $\varphi_*: \Hom(F,I^+)\to \Hom (X, I^+)$
is epi because $I^+$ is flat and $\varphi$ is a flat preenvelope.
Since $\Ext^1(I,F^+)$ is 0 because $F^+$ is injective we must have
$\Ext^1(I, X'^+)=0$.
\end{proof}

We notice that $\A=\Rmod$ satisfies these conditions when $R$ is right coherent.
((1) is Xu \cite[Lem. 3.1.4]{xu}) and (2') is \cite[Prop. 6.5.1]{relhomalg}).\todo{tjek referencer}
Our task is thus to find conditions on $Q$ s.t. the conditions from
Lemma \ref{gorensteinflat3} lift from $\A$ to $\Rep(Q,\A)$.
Lifting the condition that the flat objects are preenveloping is not obvious.
But being closed under products is sometimes enough as the next lemma shows.
We will reuse standard results on purity and therefore assume our Pontryagin Dual is $\otimes$-compatible and $\A$ to be generated by $\proj(\A).$

\begin{lemma}\label{gorensteinflat4}
Let $\A$ be a locally finitely presented abelian category with a Pontryagin dual and assume that
\begin{enumerate}[(1)]
  \item[(2)] The flat objects are closed under products
  \item[(3)] $\A$ is generated by $\proj(\A)$
  \item[(4)] The Pontryagin dual is $\otimes$-compatible
\end{enumerate}
Then the flat objects are preenveloping
\end{lemma}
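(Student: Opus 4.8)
The plan is to verify the two classical ingredients that force a class to be preenveloping: closure under products, which is assumption (2), and, for each object, the existence of a \emph{solution set} of maps into the class. Recall the formal fact I will use: if $\mathcal F\subseteq\A$ is closed under set-indexed products and, for every $M\in\A$, there is a \emph{set} of morphisms $\{\varphi_i\colon M\to F_i\}_{i\in I}$ with $F_i\in\mathcal F$ such that every morphism from $M$ to an object of $\mathcal F$ factors through some $\varphi_i$, then $\varphi=(\varphi_i)_{i\in I}\colon M\to\prod_{i\in I}F_i$ is an $\mathcal F$-preenvelope: indeed $\prod_i F_i\in\mathcal F$, and any $h\colon M\to F$ with $F\in\mathcal F$ satisfies $h=\psi\varphi_j=(\psi\pi_j)\varphi$ for a suitable $j$, a factoring $\psi\colon F_j\to F$, and the projection $\pi_j$. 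So the whole statement reduces to producing such a solution set for $\Flat(\A)$ at each $M$.

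For this I would first record that \emph{pure subobjects of flat objects are flat}. If $\iota\colon A\to F$ is a pure monomorphism and $F$ is flat, then, using $\otimes$-compatibility, applying $\mathscr{K}(-,E)$ to the monomorphisms $X\otimes\iota$ and the natural isomorphisms $\mathscr{K}(X\otimes A,E)\cong\D(X,A^{+})$ shows that $\D(X,F^{+})\to\D(X,A^{+})$ is epic for all $X\in\D$; taking $X=A^{+}$ shows $\iota^{+}\colon F^{+}\to A^{+}$ is a split epimorphism, so $A^{+}$ is a direct summand of the injective object $F^{+}$, hence injective, i.e.\ $A$ is flat. Second, I would invoke the standard boundedness result for purity in locally finitely presented categories: fixing a generator $G$ of the Grothendieck category $\A$, there is a cardinal-valued function $\kappa_{0}$ such that every subobject $A_{0}\subseteq F$ with $|\A(G,A_{0})|\le\lambda$ is contained in a pure subobject $F'\subseteq F$ with $|\A(G,F')|\le\kappa_{0}(\lambda)$. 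This is the usual $\omega$-step closure: purity of $F'\subseteq F$ is tested by the (set of) finitely presented objects, and for each relevant test datum one need only adjoin a bounded-size subobject of $F$; one iterates, using exactness of filtered colimits to control the union. It is precisely here that hypotheses (3) and (4) — that $\A$ is generated by $\proj(\A)$ and that the Pontryagin dual is $\otimes$-compatible — are what license the standard purity machinery being quoted.

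Now fix $M\in\A$, put $\lambda=|\A(G,M)|$ and $\kappa=\kappa_{0}(\lambda)$. For any $f\colon M\to F$ with $F$ flat, take $A_{0}=\operatorname{im} f\subseteq F$, so $|\A(G,A_{0})|\le\lambda$; then $f$ factors through a pure subobject $F'\subseteq F$ with $|\A(G,F')|\le\kappa$, and $F'$ is flat by the first step. Since $\A$ is Grothendieck with generator $G$, the objects $X$ with $|\A(G,X)|\le\kappa$ form, up to isomorphism, a set (each is a quotient of $G^{(S)}$ with $|S|\le\kappa$); let $\{F_{i}\}_{i\in I}$ be representatives of the flat such objects, and let $\mathcal S=\{(i,g)\mid i\in I,\ g\in\A(M,F_{i})\}$, again a set. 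Then $\{g\colon M\to F_{i}\}_{(i,g)\in\mathcal S}$ is a solution set for $\Flat(\A)$ at $M$, since any $f\colon M\to F$ with $F$ flat factors as $M\to F'\hookrightarrow F$ with $F'\cong F_{i}$ for some $i$, hence through a member of the family. Combining this with closure under products via the formal fact above yields the desired flat preenvelope $M\to\prod_{(i,g)\in\mathcal S}F_{i}$.

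The main obstacle is the boundedness statement for pure subobjects in an abstract locally finitely presented category; everything else is formal. I would either cite the purity literature at this level of generality or spell out the transfinite (here, $\omega$-length) closure argument, being careful that the size measure $|\A(G,-)|$ remains controlled at the limit stage, which is where AB5 is used.
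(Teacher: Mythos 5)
Your proposal follows essentially the same route as the paper: both reduce the problem to exhibiting a solution set for $\Flat(\A)$ at each object, use closure under products (hypothesis (2)) to assemble the preenvelope, and rest the construction of the solution set on the claim that bounded pure subobjects of flat objects exist and are themselves flat. In fact, the paper's citation of the Enochs--Jenda lemma is precisely your boundedness-plus-$\omega$-closure construction, and its citation of Jensen--Lenzing is precisely your bridge from purity to ``the dual map is split epi,'' so the two writeups are the same argument in different dress.

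One point that your sketch leaves under-justified, and which the paper's proof handles carefully through the two Jensen--Lenzing references, is the identification of the \emph{notion of purity} used in the two halves of your argument. Your lemma ``pure subobjects of flat objects are flat'' proceeds by observing that $X\otimes\iota$ is monic for all $X\in\D$, which is $\otimes$-purity and uses only hypothesis~(4). Your boundedness step, on the other hand, constructs $F'\subseteq F$ by adjoining bounded solutions to the finitely-presented lifting tests, i.e.\ categorical purity. These two notions do coincide here, but that coincidence is not automatic from $\otimes$-compatibility alone: the passage from ``the lifting property against finitely presented objects holds'' to ``$\iota$ is a direct limit of split monos'' (hence $\otimes$-pure, because $\otimes$ is continuous and $\A$ is AB5) is where hypothesis~(3), that $\A$ is generated by $\proj(\A)$, is actually used, via the dualizability of those generators. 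Your parenthetical remark that ``(3) and (4) license the standard purity machinery'' gestures at this, but attributes them to the boundedness step, where they play no role; they are needed instead to unify the two purity notions. This is a fixable imprecision rather than a failure of method, but it is exactly the step the paper's explicit Jensen--Lenzing citations are there to make precise, so it should not be waved away.
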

\begin{proof}
Let $X\in \A$. The idea (as in \cite[Prop. 6.2.1]{relhomalg}) is to find a set of flat objects $\mathscr{S}$ s.t. 
every map $X\to Y$ with $Y$ flat factors as $X\to S\hookrightarrow Y$ 
with $S\in \mathscr{S}$.
Then we can construct a flat preenvelope as 
\[
X\to \prod_{\begin{matrix}
              S\in \mathscr{S} \\
              \varphi: X\to S
            \end{matrix}} 
            S_\varphi,
\]
with $S_\varphi = S$ because the flat objects are closed under products by (2). 

As in in the proof of \cite[Lemma 5.3.12]{relhomalg} there is a set of objects
 $\mathscr{S}\subseteq \A$ s.t. every map $X\to Y$ 
to some $Y\in\A$ factors as $X\to S\hookrightarrow Y$ 
for some $S\in \mathscr{S}$ with the property that, 
given a commutative square
\[
\begin{tikzcd}
L_0 
\ar[hookrightarrow]{r}
\ar{d}
    & 
L_1
\ar[dashed]{dl}
\ar{d}
    \\
S
\ar[hookrightarrow]{r}
    &
Y
\end{tikzcd}
\]
with $L_0$ finitely generated and $L_1$ finitely presented there is a lift $L_1\to S$ s.t. the left triangle commutes.
The proof is for modules and bounds size of $\mathscr S$ by some cardinality. If we are not interested in the cardinality, the proof works in any well-powered category, i.e. a category where there is only a set of subobjects of any given object.
As in Ad\'amek and and Rosick\'y \cite{adamek94} \todo{find reference} any Grothendieck category is well-powered.
We are left with proving that if $Y$ is flat, so is $S$,
i.e. if $Y^+$ is injective, so is $S^+$. 
Now Jensen and Lenzing \cite[Prop. 7.16]{jensen89} shows (using (3)) 
that the above lifting property implies (in fact is equivalent to) that 
$S\hookrightarrow Y$ is a direct limit of split monomorphisms. 
\cite[Thm 6.4]{jensen89} then shows (using (4))
that this implies, that $Y^+\to S^+$ 
is split epi. (Equivalence of these statements uses that 
the generators in $\Rmod$ are \emph{dualizable}).
Thus if $Y^+$ is injective, so is $S^+$.
\end{proof}

\begin{lemma}\label{lem:1-5}
  Let $(-)^+\colon \A\op\to\B$ be a Pontryagin dual where $\A$ is AB4${}^*$ and $\B$ has enough injective objects. Let $Q$ be a left-rooted and target-finite quiver. If $\A$ satisfies $(1)$-$(4)$ (from Lemma \ref{gorensteinflat3} and \ref{gorensteinflat4}) then so does $\Rep(Q,\A)$.
\end{lemma}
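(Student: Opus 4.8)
The plan is to check the four conditions (1)--(4) one at a time for $\Rep(Q,\A)$, using the assumption that $Q$ is left-rooted and target-finite and leaning on the machinery already developed. Condition (4), $\otimes$-compatibility, is the easiest: by Example~\ref{dualex} 2) the component-wise Pontryagin dual on $\Rep(Q,\A)$ is automatically $\otimes$-compatible with the coend tensor product whenever the dual on $\A$ is, so this transfers for free. Condition (3), generation by $\proj$, also transfers cleanly: since $Q$ is left-rooted, the argument in the proof of Lemma~\ref{fgprojchar} shows that $f_*(\proj(\A))$ consists of finitely generated projective objects of $\Rep(Q,\A)$ (using that $f_v$ preserves projectives by Remark~\ref{delta} and preserves smallness by Lemma~\ref{star1}(i), and that $Q$ target-finite makes $f_v(X)$ finitely generated), and these generate $\Rep(Q,\A)$ because the $\proj(\A)$ generate $\A$ and each object of $\Rep(Q,\A)$ is built from its values at the vertices via the $f_v$.

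For condition (2), that flat objects are closed under products, I would use Theorem~\ref{thmc}: $\Flat(\Rep(Q,\A))=\Phi(\Flat(\A))$. A product of objects in $\Phi(\Flat(\A))$ lies in $\Phi(\prod\Flat(\A))$ by Lemma~\ref{nylem2}(v) — this is exactly where target-finiteness of $Q$ is needed, since it makes the defining sum $\bigoplus_{w\to v}$ a finite product that commutes with arbitrary products (and AB4${}^*$ of $\A$ guarantees the relevant rows stay exact). Since $\prod\Flat(\A)\subseteq\Flat(\A)$ by hypothesis (2) on $\A$, we get $\prod\Phi(\Flat(\A))\subseteq\Phi(\Flat(\A))$, i.e. $\Flat(\Rep(Q,\A))$ is closed under products. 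Note AB4${}^*$ of $\A$ also passes to $\Rep(Q,\A)$ since products there are computed pointwise.

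The last and most delicate condition is (1), $\Inj(\mathscr B_Q)^+\subseteq\Flat(\mathscr C_Q)$, where $\mathscr C_Q=\Rep(Q,\A)$ and $\mathscr B_Q=\Rep(Q\op,\B)$. Here I would combine Theorem~\ref{thm8} and Lemma~\ref{lemphipsi}. An injective object of $\Rep(Q\op,\B)$ is of the form $G\in\Psi(\Inj(\B))$; since $Q$ is target-finite, $Q\op$ is source-finite, so by Lemma~\ref{lemphipsi} we have $\Psi(\Inj(\B))^+\subseteq\Phi(\Inj(\B)^+)\subseteq\Phi(\Flat(\A))=\Flat(\Rep(Q,\A))$, the middle inclusion using hypothesis (1) for $\A$ and the last equality being Theorem~\ref{thmc} again. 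I expect this verification of (1)—making sure the component-wise dual of an injective representation is genuinely computed from the values $G(v)^+$ and that the $\Psi$/$\Phi$ bookkeeping lines up—to be the main obstacle, together with confirming that Lemma~\ref{gorensteinflat4} is applicable to $\Rep(Q,\A)$ (which requires it to be locally finitely presented: this follows because it is Grothendieck with the finitely generated projective generators $f_*(\proj(\A))$ found above, so $FP_1$ generates). Once (1)--(4) are all in hand for $\Rep(Q,\A)$, the statement is proved.
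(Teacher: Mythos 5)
Your proposal is correct and follows essentially the same route as the paper: verify (1)--(4) for $\Rep(Q,\A)$, using Theorem~\ref{thm8}, Theorem~\ref{thmc}, Lemma~\ref{lemphipsi}, Lemma~\ref{nylem2}(v), and the preservation properties of $f_v$ (the paper phrases the verification of (1) as $\Inj(\Rep(Q\op,\B))^{++}\subseteq\Inj(\Rep(Q\op,\B))$, which by Definition~\ref{defn9} is the same as your more direct $\Inj(\Rep(Q\op,\B))^+\subseteq\Flat(\Rep(Q,\A))$). One small correction: for condition (3) you do not need target-finiteness to see that $f_v(X)$ is finitely presented — Lemma~\ref{star1}(i) already gives that $f_v$ preserves $FP_*$ assuming only AB5 — and indeed the paper explicitly notes that (3) and (4) lift with no hypotheses on $Q$ at all.
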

\begin{proof}
For (1) we have 
\begin{align*}
\Inj(\Rep(Q^{\text{op}},\B))^{++}
    &\subsetref{Thm. \ref{thm8}} \Psi(\Inj(\B))^{++}
    \subsetref{Lem. \ref{lemphipsi}} \Psi(\Inj(\B)^{++}) \\    &\subseteq \Psi(\Inj(\B))
    \subsetref{Thm. \ref{thm8}} \Inj(\Rep(Q,\B))
\end{align*}
since $\B$ has enough injective objects and $Q$ is left-rooted and target-finite.
For (2) we have 
\begin{align*}
\textstyle\textstyle\prod \Flat(\Rep(Q,\A))
    &\subsetref{\text{Thm. }\ref{thmc}}\textstyle\prod \Phi(\Flat \A)
    \subsetref{Lem. \ref{nylem2}} \Phi(\textstyle\prod \Flat \A) \\    &\subseteq \Phi(\Flat \A)
    \subsetref{\text{Thm. }\ref{thmc}} \Flat(\Rep(Q,\A)).
\end{align*}
since $\A$ is AB4${}^*$ and $\B$ has enough injective objects and $Q$ is left-rooted and target-finite.
(3) and (4) lifts without conditions on $\A$ and $Q.$
For (3), if $\A$ is generated by a set $\mathscr{X}$ of 
finitely generated projective objects then $f_*(\mathscr{X})$ is a 
generating set of finitely generated projective objects by Lem. \ref{star1}$(i)$ and Remark \ref{delta}. (4) is lifted in Example \ref{dualex}.
\begin{comment}
and 
for (4) if $\A$ has a notion of cardinality, 
we get a notion of cardinality for $\Rep(Q,\A)$ by summing the cardinalty of the components.
\end{comment}
\end{proof}

Notice that $(3)-(4)$ holds for $\A=\Rmod$ over any ring $R$,
and $(2)$ is equivalent to $R$ being right coherent \cite[Prop. 3.2.24]{relhomalg}.

\begin{prop}\label{propd}
  Let $\A$ be a locally finitely presented abelian  $AB4^*$-category. Let $\B$ be an abelian category with enough injective objects, let $(-)^+\colon \A\op\to\B$ be a $\otimes$-compatible Pontryagin dual.
  If
  \begin{itemize}
  \item $\A$ is generated by $\proj(\A)$
  \item $\Flat(\A)$ is closed under products
  \item $\Inj(\B)^+\subseteq\Flat(\A)$
  \end{itemize}
  then $\Flat(\A)$ is preenveloping and  $\wGFlat(\A)=\GFlat(\A).$
  Assume further that $Q$ is a left-rooted and target-finite quiver. Then $\Flat(\Rep(Q,\A))$ is preenveloping and
  $$\wGFlat(\Rep(Q,\A))=\GFlat(\Rep(Q,\A)).$$
\end{prop}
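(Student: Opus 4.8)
The plan is to assemble Proposition~\ref{propd} entirely from the machinery already in place, treating the ground-category statement and the representation-category statement as two instances of the same argument.

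First I would dispatch the statement for $\A$ itself. The three bulleted hypotheses are exactly conditions $(2)$, $(3)$, $(4)$ of Lemma~\ref{gorensteinflat4} together with condition $(1)$ of Lemma~\ref{gorensteinflat3}. So Lemma~\ref{gorensteinflat4} immediately gives that $\Flat(\A)$ is preenveloping; this is condition $(2')$ of Lemma~\ref{gorensteinflat3}. For condition $(3')$ — that $\A$ has enough flat objects — I would note that $\A$ is generated by $\proj(\A)$, hence has enough projectives, and projective objects are flat (their Pontryagin duals are injective: a split summand of a product of duals of generators, which are injective by the reasoning in the proof of Lemma~\ref{lem:1-5}, or more simply because $-\otimes P$ is exact on projectives via $\otimes$-compatibility). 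With $(1)$, $(2')$, $(3')$ verified, Lemma~\ref{gorensteinflat3} yields $\wGFlat(\A)=\GFlat(\A)$; the reverse inclusion $\GFlat(\A)\subseteq\wGFlat(\A)$ is automatic by the remark following Definition~\ref{defn9}.

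Next I would pass to $\Rep(Q,\A)$. The key observation is that $\Rep(Q,\A)$ inherits all the structural hypotheses: it is locally finitely presented (since $\A$ is and $Q$ is a quiver — one can check $FP_1$ is generated by the $f_v$ of finitely presented objects, or invoke that $f_*(\proj\A)$ is a generating set of finitely generated projectives as in the proof of Lemma~\ref{lem:1-5}), it is $AB4^*$ because limits are pointwise, it carries a $\otimes$-compatible Pontryagin dual $(-)^+\colon\Rep(Q,\A)\op\to\Rep(Q\op,\B)$ by Example~\ref{dualex} 2), and $\Rep(Q\op,\B)$ has enough injectives. Crucially, Lemma~\ref{lem:1-5} states precisely that when $Q$ is left-rooted and target-finite, conditions $(1)$–$(4)$ lift from $\A$ to $\Rep(Q,\A)$. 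Therefore $\Rep(Q,\A)$ satisfies all the hypotheses of the first half of this very proposition, and applying that half (now to $\Rep(Q,\A)$ in place of $\A$) gives that $\Flat(\Rep(Q,\A))$ is preenveloping and $\wGFlat(\Rep(Q,\A))=\GFlat(\Rep(Q,\A))$.

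I do not expect a serious obstacle here, since the proposition is essentially a packaging of Lemmas~\ref{gorensteinflat3}, \ref{gorensteinflat4}, and \ref{lem:1-5}. The one point requiring a little care is confirming that "enough flat objects" holds in both $\A$ and $\Rep(Q,\A)$, as this is condition $(3')$ of Lemma~\ref{gorensteinflat3} but is not among the four numbered conditions that Lemma~\ref{lem:1-5} explicitly lifts; the clean way around it is the remark that being generated by finitely generated projectives (condition $(3)$, which does lift) forces enough projectives, hence enough flats. A secondary bookkeeping point is checking that $\Rep(Q,\A)$ is genuinely locally finitely presented so that Lemma~\ref{gorensteinflat4} applies to it; this again follows from $f_*(\proj\A)$ being a generating set of finitely generated projectives, already used inside the proof of Lemma~\ref{lem:1-5}.
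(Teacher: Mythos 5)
Your proposal is correct and follows exactly the route the paper takes: combine Lemmas~\ref{gorensteinflat3}, \ref{gorensteinflat4}, and \ref{lem:1-5}, and supply condition $(3')$ for free from $(3)$ since enough projectives (which are flat, having injective Pontryagin duals via $\Hom_\B(-,P^+)\cong\Hom_\A(P,(-)^+)$) gives enough flats. You are somewhat more explicit than the paper's terse proof in verifying that $\Rep(Q,\A)$ inherits the structural hypotheses, but the decomposition and the key lemmas invoked are identical.
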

\begin{proof}
  This follows from Lemma \ref{gorensteinflat3} and \ref{gorensteinflat4} and \ref{lem:1-5}.
We also need (3') to hold and we could lift this directly
by noting that $f_v$ respects flatness, but it also follows from (3).
\end{proof}
We can now prove
\begin{proof}[Proof of Theorem~\ref{thmd}]
Use Proposition~\ref{propd} and the remark above it.
\end{proof}

\begin{remark}\label{rem:wgf=gf}
  In \cite[Lem 6.9 and proof of Thm. 6.11]{enochsinj} it is proved that $$\wGFlat(\Rep(Q,\Rmod))=\GFlat((\Rep(Q,\Rmod))$$
  when $R$ is Iwanaga-Gorenstein and $Q$ is only required to be left-rooted.
  Theorem~\ref{thmd} thus weakens the condition of $R$ but must then strengthen the conditions on $Q.$
\end{remark}

\begin{cor}\label{limgp=gf}
  Let $Q$ be a left-rooted quiver and let $\A$ be as in Proposition~\ref{propd}.
  If $\A=\Rmod$ for some Iwanaga-Gorenstein ring $R$ or
  \begin{itemize}
  \item  $\varinjlim \Gproj(\A) = \GFlat(\A)$ and
  \item $Q$ is target-finite and locally path-finite
  \end{itemize}
Then
$$\varinjlim\Gproj(\Rep(Q,\Rmod) = \GFlat(\Rep(Q,\Rmod)=\Phi(\GFlat(\Rmod)).$$
\end{cor}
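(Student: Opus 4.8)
The plan is to chain the desired equalities out of Theorems~\ref{thmb} and~\ref{thmc} and Proposition~\ref{propd}, running the two alternative hypotheses in parallel and branching only where the arguments genuinely differ. Throughout, $\A$ is as in Proposition~\ref{propd}; in particular it is locally finitely presented and generated by $\proj(\A)$, so it has enough projective objects (the coproduct of all morphisms from a skeletal set of finitely generated projectives onto a given object is an epimorphism, and a coproduct of projectives is projective).

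First I would invoke Theorem~\ref{thmb}. Its hypotheses hold: $\A$ is locally finitely presented with enough projectives, $Q$ is left-rooted, and in the Iwanaga-Gorenstein case we have $\varinjlim\Gproj(\A)=\varinjlim\GProj(\A)$ (the example recorded in Theorem~\ref{thmb} itself), so its second alternative applies, while otherwise $Q$ is target-finite and locally path-finite, which is its first alternative. Hence
\[ \Phi(\varinjlim\Gproj(\A))=\varinjlim\Gproj(\Rep(Q,\A)). \]

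Next I would rewrite the left-hand side. We have $\varinjlim\Gproj(\A)=\GFlat(\A)$: in the Iwanaga-Gorenstein case by Enochs--Jenda \cite[Thm.~10.3.8]{relhomalg}, and otherwise by the first bulleted hypothesis. So $\Phi(\varinjlim\Gproj(\A))=\Phi(\GFlat(\A))$. Since $\A$ meets the hypotheses of Proposition~\ref{propd}, that proposition gives $\GFlat(\A)=\wGFlat(\A)$, hence $\Phi(\GFlat(\A))=\Phi(\wGFlat(\A))$; and Theorem~\ref{thmc} (applicable because $Q$ is left-rooted and the target of the Pontryagin dual has enough injective objects) identifies this with $\wGFlat(\Rep(Q,\A))$.

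It then remains to transport $\wGFlat=\GFlat$ to $\Rep(Q,\A)$. When $Q$ is target-finite this is precisely the second conclusion of Proposition~\ref{propd}; in the remaining case ($\A=\Rmod$ with $R$ Iwanaga-Gorenstein and $Q$ only left-rooted) it is the statement of \cite{enochsinj} recalled in Remark~\ref{rem:wgf=gf}. Either way $\wGFlat(\Rep(Q,\A))=\GFlat(\Rep(Q,\A))$, and concatenating the identifications gives $\varinjlim\Gproj(\Rep(Q,\A))=\Phi(\GFlat(\A))=\GFlat(\Rep(Q,\A))$, which is the assertion. This corollary is essentially bookkeeping, so the only real point of care is the case split: each of the two hypotheses must independently provide every ingredient, and in particular the Iwanaga-Gorenstein branch must lift $\wGFlat=\GFlat$ via \cite{enochsinj} rather than Proposition~\ref{propd} (since $Q$ need not be target-finite there), whereas the other branch must simply assume $\varinjlim\Gproj(\A)=\GFlat(\A)$ rather than cite it.
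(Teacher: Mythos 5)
Your proof is correct and takes essentially the same route as the paper: the paper cites Corollary~\ref{newprop20} (itself a combination of Theorems~\ref{thmb} and~\ref{thmc}) and then Proposition~\ref{propd} or Remark~\ref{rem:wgf=gf}, whereas you inline Corollary~\ref{newprop20} by invoking Theorems~\ref{thmb} and~\ref{thmc} directly and bridging via $\GFlat(\A)=\wGFlat(\A)$ from Proposition~\ref{propd}. The case split you flag (using Remark~\ref{rem:wgf=gf} rather than Proposition~\ref{propd} in the Iwanaga--Gorenstein branch, since $Q$ need not be target-finite there) is exactly the point of care the paper also notes.
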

\begin{proof}
  Apply Corollary~\ref{newprop20} and Proposition~\ref{propd} (or Remark~\ref{rem:wgf=gf} for the Gorenstein case) to get $\varinjlim\Gproj = \wGFlat = \GFlat$ in $\Rep(Q,\A)$. The last equality then follows from Theorem~\ref{thmc}.
\end{proof}

\section*{Acknowledgements}
I want to thank my advisor, Henrik Holm, for suggesting the topic and for numerous discussions. I would also like to thank Jan \v S\v toví\v cek for providing the proof of \Pref{prop1} on a napkin when I caught him on his way to lunch. Finally I must thank student Asbjørn Bækgaard Lauritsen for typing the manuscript after I became electromagnetic hypersensitive.
\bibliographystyle{amsplain}
\bibliography{bibliography}{}

\end{document}